\documentclass[11pt, notitlepage]{article}
\usepackage{amssymb,amsmath,comment}
\catcode`\@=11 \@addtoreset{equation}{section}
\def\thesection{\arabic{section}}

\def\theequation{\thesection.\arabic{equation}}
\catcode`\@=12
\usepackage{colortbl}%
\usepackage{a4wide}
\usepackage{parskip}

\newcommand{\ds} {\displaystyle}
\newcommand{\e}{\epsilon}

\newcommand{\al} {\alpha}
\newcommand{\ba} {\beta}
\newcommand{\sg} {\sigma}
\newcommand{\de} {\delta}
\newcommand{\ga} {\gamma}

\newcommand{\Om} {\Omega}
\newcommand{\ra} {\rightarrow}

\newcommand{\De} {\Delta}
\newcommand{\la} {\lambda}
\newcommand{\La} {\Lambda}
\newcommand{\noi} {\noindent}
\newcommand{\na} {\nabla}

\newcommand{\oline} {\overline}
\newcommand{\mb} {\mathbb}
\newcommand{\mc} {\mathcal}
\newcommand{\lra} {\longrightarrow}

\setcounter{page}{1}\pagestyle{myheadings}
\usepackage[all]{xy}
\catcode`\@=11
\def\theequation{\@arabic{\c@section}.\@arabic{\c@equation}}
\catcode`\@=12

\def\QED{\hfill {$\square$}\goodbreak \medskip}

\newtheorem{Theorem}{Theorem}[section]
\newtheorem{Lemma}[Theorem]{Lemma}
\newtheorem{Proposition}[Theorem]{Proposition}

\newtheorem{Remark}[Theorem]{Remark}
\newtheorem{Definition}[Theorem]{Definition}

\def\Xint#1{\mathchoice
	{\XXint\displaystyle\textstyle{#1}}%
	{\XXint\textstyle\scriptstyle{#1}}%
	{\XXint\scriptstyle\scriptscriptstyle{#1}}%
	{\XXint\scriptscriptstyle\scriptscriptstyle{#1}}%
	\!\int}
\def\XXint#1#2#3{{\setbox0=\hbox{$#1{#2#3}{\int}$ }
		\vcenter{\hbox{$#2#3$ }}\kern-.6\wd0}}

\begin{document}
{\vspace{0.01in}
\title
{Regularity and multiplicity results for fractional
	$(p,q)$-Laplacian equations}

\author{ {\bf Divya Goel\footnote{e-mail: divyagoel2511@gmail.com}\;, Deepak Kumar\footnote{email: deepak.kr0894@gmail.com}  \;and  K. Sreenadh\footnote{	e-mail: sreenadh@maths.iitd.ac.in}} \\ Department of Mathematics,\\ Indian Institute of Technology Delhi,\\
	Hauz Khaz, New Delhi-110016, India. }
\date{}

\maketitle

\begin{abstract}
 \noi This article deals with the study of the  following nonlinear  doubly nonlocal   equation: 
  \begin{equation*}
   (-\Delta)^{s_1}_{p}u+\ba(-\Delta)^{s_2}_{q}u  = \la a(x)|u|^{\delta-2}u+ b(x)|u|^{r-2} u,\;  \text{ in }\; \Om, \; u=0 \text{ on } \mathbb{R}^n\setminus \Om,
  \end{equation*}
  where $\Om$ is a bounded domain in $\mathbb{R}^n$ with smooth boundary, $1< \de \le q\leq p<r \leq p^{*}_{s_1}$, with $p^{*}_{s_1}=\ds \frac{np}{n-ps_1}$, $0<s_2 < s_1<1$, $n> p s_1$ and $\la, \ba>0$ are  parameters. Here $a\in L^{\frac{r}{r-\de}}(\Om)$ and $b\in L^{\infty}(\Om)$ are sign changing functions. We prove the $L^\infty$ estimates, weak Harnack inequality and Interior H\"older regularity of the  weak solutions of the above problem in the subcritical case $(r<p_{s_1}^*).$ Also, by analyzing the fibering maps and minimizing the energy functional over suitable
  subsets of the Nehari manifold, we prove existence and multiplicity of weak solutions to above 
  convex-concave problem. In case of $\de=q$, we show the existence of  solution.   

\medskip

 \noi \textbf{Key words:} Doubly nonlocal equation, Fractional
 $(p,q)$-Laplacian,  Nehari manifold, Regularity results.

\medskip

\noi \textit{2010 Mathematics Subject Classification:} 35J35, 35J60, 35R11

\end{abstract}

\bigskip
\vfill\eject

\section {Introduction}
  \noindent The purpose of this article is to study the existence and multiplicity of solutions of the following fractional problem
    \begin{equation*}
        (\mc P_\la) \; \left\{
         \quad  (-\Delta)^{s_1}_{p}u+ \ba (-\Delta)^{s_2}_{q}u = \la a(x)|u|^{\de-2}u+ b(x)|u|^{r-2} u, \; \text{in} \; \Om,   u =0, \text{on} \; \mb R^n\setminus \Om,
          \right.
      \end{equation*}
  \noi where $\Om$ is a bounded domain in $\mb R^n$ with smooth boundary, $1< \de\le q\leq p<r \leq p^{*}_{s_1}$, with $p^{*}_{s_1}=\ds \frac{np}{n-p s_1}, 0< s_2 < s_1<1, n> p s_1$ and $\la,\ba>0$ are real parameters. Here $a\in L^{\frac{r}{r-\de}}(\Om)$ and $b\in L^{\infty}(\Om)$ are sign changing functions and the fractional $p$-Laplace operator $(-\Delta)^{s}_{p}$ is defined as 
       \begin{equation*}
            {(-\Delta)^{s}_pu(x)}= 2\ds\lim_{\e\ra 0}\int_{\mathbb R^n\setminus B_\e(x)}\frac{|u(y)-u(x)|^{p-2}(u(y)-u(x))}{|x-y|^{n+ps}}dy.
       \end{equation*}
  In recent years there has been an ample amount of work on  nonlocal operators, particularly on fractional $p$-Laplacian due to its wide applications in real world such as finance, obstacle problems, conservation laws, phase transition, image processing, anomalous diffusion, material science and many more. For more details, we refer to \cite{bakunin, caffarelli, gilboa, levendorski, nezzaH, pham, silvestre, zhu}  and the references therein. Problems of the type $(\mc P_\la)$  are known as double phase equations where the leading operator switches between two nonlocal nonlinear operators. \par
 In the local case, the problems of the type $(\mc P_\la)$ are known as $(p,q)$-Laplacian problem 
     \begin{equation}\label{prb3}
        \quad  -\Delta_{p}u- \ba\Delta_{q}u = f(x,u) \quad \text{in } \; \Om, \qquad  u =0\quad \text{on} \ \mb R^n\setminus \Om,
     \end{equation}
  where $\De_p u:= \na \cdot (|\na u|^{p-2}\na u)$. These equations arise while studying the stationary solutions of general reaction-diffusion equation
   \begin{equation}\label{prb1}
	   u_t= \na \cdot [A(u)\nabla u]+ r(x,u),
   \end{equation}
  where $A(u)= |\nabla u|^{p-2}+|\nabla u|^{q-2}.$ The 
  problem \eqref{prb1} has  applications in biophysics, plasma physics and chemical reactions, where the function $u$ corresponds to the concentration term, the first term on the right hand side represents diffusion with a diffusion coefficient $A(u)$ and the second term is the reaction which relates to sources and loss processes. In such applications, the reaction term $r(x,u)$ has polynomial form with respect to the concentration $u$. For more details,  readers are referred to   \cite{benci, cherfils, jia}. 
  This   wide range of applications,  provoked many researchers to study stationary solutions of \eqref{prb1} that is,  \begin{equation}\label{prb2}
       -\na\cdot[A(u)\nabla u]=r(x,u),\end{equation} 
  for instance, Marano and Papageorgiou \cite{marano} obtained three solutions of the problem \eqref{prb3} using variational methods and truncation arguments. While Li and Zhang \cite{li} studied stationary solutions of problem \eqref{prb1} with concave-convex nonlinearities by taking $r(x,u)= |u|^{p^*-2}u+\theta |u|^{r-2}u, 1<r<p^*,\; p^*=\frac{np}{n-p}$, $1< p<n$ and using Lusternik-Schnirelman theory they proved infinitely many weak solutions of the problem in $W^{1,p}_0(\Om)$ for some range of $\theta$. Moreover, Yin and Yang \cite{yin} considered Dirichlet problem corresponding to \eqref{prb2} with $r(x,u)=|u|^{p^*-2}u+\theta V(x)|u|^{r-2}u+\la f(x,u)$, where $f(x,u)$ is a subcritical perturbation, and proved multiplicity of solutions.
  Recently Huang et al. \cite{huang} considered problem \eqref{prb2} in $\mb R^n$ to prove multiplicity of solutions with the reaction term $r(x,u)=K(x)|u|^{p^*-2}u+\la V(x)|u|^{k-2}u$, when $1<k<q<p<n.$ We refer \cite{maranorcn} for a survey of some recent advances in $(p,q)-$Laplacian problems.\par
  When $\ba=0$ or $p=q$ and $s_1=s_2$, the problem in $\mc (\mc P_\la)$ reduces to the following:
      \begin{equation}\label{prb4}
          \quad  (-\Delta)_p^{s}u= \la\; a(x)|u|^{\de-2}u+ b(x)|u|^{r-2} u \; \text{in}\;
          \Om, \;
          u =0\; \text{on} \ \mb R^n\setminus \Om,
      \end{equation}
  which is a nonlocal elliptic equation involving fractional $p-$Laplacian with combination of concave and convex nonlinearities.
  For $p=2$, Br\"andle et al. \cite{brandle} studied problem \eqref{prb4} for the subcritical case where they proved existence and non-existence of non-negative solutions for some range of $\la$. In \cite{brrios} Barrios et al. proved existence and multiplicity of solutions for the nonhomogeneous fractional Laplacian equation, and Colorado et al. \cite{colorado} considered the problem involving the critical Sobolev exponent. Moreover, with general nonlinearity Wei and Su \cite{wei} proved existence and multiplicity using Mountain Pass Theorem.  
  For general $p$, Goyal and Sreenadh \cite{goyal} studied \eqref{prb4} in the subcritical case by minimizing the energy functional over some subsets of the Nehari manifold associated to this problem, and Chen and Deng \cite{chen} studied multiplicity of solutions of above problem by Nehari manifold and fibering maps for the critical case.\par
   Recently, Bhakta and Mukherjee \cite{bhakta} studied the following problem in bounded domain
  \begin{equation*}
  (F_{\theta,\la}) \left\{
  (-\Delta)^{s_1}_{p}u+ (-\Delta)^{s_2}_{q}u = |u|^{p_{s_1}^*-2}u+\theta V(x)|u|^{r-2}u+ \la f(x,u) \; \text{in}\; \Om, \;
  u =0\; \text{on} \ \mb R^n\setminus \Om,
  \right.
  \end{equation*}
  where $0<s_2<s_1<1$, $1<r<q<p<\frac{n}{s_1}$, and  $V$ and $f$ are some appropriate functions. Here  they proved $(F_{\theta,\la})$ has infinitely many weak solutions for some range of $\la$ and $\theta$. Moreover, for  $V(x)\equiv 1$, $\la=0$ and assuming certain other conditions on $n, q, r$, they proved the existence of $cat_\Om(\Om)$ many solutions of $(F_{\theta,\la})$ using Lusternik-Schnirelmann category theory. Regarding the $(p,q)$-fractional elliptic problems on whole domain $\mathbb{R}^n$, we  cite \cite{ambrosio,chenc}. 
  
  In \cite{Iannizzotto} Iannizzotto et al. studied the following equation  \begin{equation}\label{prb5}
  		       (-\De)_p^s\; u=f(x,u) \; \mbox{in } \Om, \; u=0 \; \mbox{in } \mb  R^n\setminus\Om, 
     \end{equation} 
  where $f$ satisfies the growth condition $|f(x,t)|\le a(|t|^{q-1}+|t|^{r-1})$ a.e. $x\in\Om$ and all $t\in\mb R$, and $1\le q\le p\le r< p^*_s$ to get existence of weak solutions by Morse theory. They proved weak solutions belong to $L^\infty(\Om)$, with additional assumption $1+\frac{q}{p}>\frac{r}{p}+\frac{r}{p_s^*}$, using fractional version of De Giorgi's iterations technique. As far as H\"older regularity is concerned, 
  Kassmann \cite{Kassmann} proved nonlocal Harnack inequality and interior H\"older estimates for $p=2$ and for general $p$,
   Di Castro et al. \cite{castro} extended De Giorgi-Nash-Moser iteration technique to obtain interior H\"older regularity of the following equation
      \begin{align*}
	      (-\De)_p^s u=0 \; \mbox{in } \Om, \; u=g \quad \mbox{in } \mb  R^n\setminus\Om.
      \end{align*}
  Here authors used Caccioppoli type theorem and logarithmic estimates with nonlocal tails for weak solution $u$. Moreover, Iannizzotto et al. \cite{iann} obtained global H\"older regularity for weak solutions of \eqref{prb5} when the term on the right hand side depends only on $x$ and is in  $L^\infty(\Om)$ by using barrier arguments and Krylov's approach to boundary regularity. Inspired from all these works, in this paper  we study the $L^\infty$ estimates and interior Holder regularity of weak solutions of $(\mc P_\la)$.  By using the fractional version of De Giorgi iteration technique, we prove the $L^\infty$ estimates for the weak solutions of $(\mc P_\la)$. We further prove the weak Harnack inequality for weak solutions of $(\mc P_\la)$  and  the interior H\"older regularity using the Moser iteration technique. Our main result on regularity is the following:
 \begin{Theorem} \label{thm1.1}
	Let $r<p_{s_1}^{*} $ and $u$ be a weak solution of $(\mc P_\la)$, then $u\in L^\infty(\Om).$ Moreover, if $q\ge 2$ and $a\in L^\infty(\Om)$, then there exists $\al\in(0,1)$ such that $u \in L^\infty(\Om) \cap C^{0,\al}(\oline{\Om^\prime}) $ for all $\Om^\prime \Subset \Om$.    
 \end{Theorem}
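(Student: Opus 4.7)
The plan is to handle the two claims in sequence: first the global $L^\infty$ bound through a fractional De Giorgi iteration, then the interior H\"older estimate via a Moser-type weak Harnack inequality and an oscillation-decay lemma.

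\textbf{Step 1 ($L^\infty$ estimate).} I would test the weak form of $(\mc P_\la)$ against the truncation $w_k=(u-k)_+$, $k\ge 0$. The standard monotonicity inequalities for the two nonlocal operators produce a Caccioppoli-type bound of the shape
\begin{equation*}
[w_k]_{s_1,p}^{p} + \ba\,[w_k]_{s_2,q}^{q} \le C\int_{A_k}\bigl(\la\,|a(x)|\,|u|^{\de-1}+|b(x)|\,|u|^{r-1}\bigr)\,w_k\,dx,\qquad A_k:=\{u>k\}.
\end{equation*}
Using H\"older's inequality together with $a\in L^{r/(r-\de)}$, $b\in L^{\infty}$, and the fractional Sobolev embedding $W^{s_1,p}_0(\Om)\hookrightarrow L^{p_{s_1}^*}(\Om)$, the right-hand side is bounded by a constant times $\|w_k\|_{L^{p_{s_1}^*}}\,|A_k|^{\sigma}$ where $\sigma>0$ thanks to the strict inequality $r<p_{s_1}^{*}$. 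Iterating along the dyadic sequence $k_j=M(1-2^{-j})$ and invoking the classical De Giorgi geometric-decay lemma yields $|A_M|=0$ for all sufficiently large $M$; applying the same reasoning to $-u$ gives $u\in L^\infty(\Om)$.

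\textbf{Step 2 (weak Harnack and H\"older regularity).} For the interior H\"older estimate the goal is the oscillation decay
\begin{equation*}
\operatorname*{osc}_{B_\rho}(u) \le \eta\,\operatorname*{osc}_{B_{2\rho}}(u) + C\rho^{\gamma},\qquad \eta\in(0,1),
\end{equation*}
on concentric balls $B_{2\rho}\Subset\Om$, from which the H\"older exponent $\al$ follows by a standard geometric iteration. The key input is a weak Harnack inequality for nonnegative weak supersolutions, which I would establish through the nonlocal Moser scheme of Di Castro--Kuusi--Palatucci and Iannizzotto et al., adapted to the mixed operator: working with $v=u+d$ for a shift $d$ depending on the nonlocal tail of $u$, one tests against powers $v^{\theta}$ to derive both a logarithmic Caccioppoli estimate and, via Moser iteration on positive and negative exponents, a reverse H\"older chain for $v$. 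Here the hypothesis $q\ge 2$ is essential to access the convexity inequality $(|a|^{q-2}a-|b|^{q-2}b)(a-b)\ge c|a-b|^{q}$ that underpins the $q$-Caccioppoli bound, while $a\in L^\infty$ (combined with $b\in L^\infty$ and the $L^\infty$ bound from Step~1) recasts the right-hand side of $(\mc P_\la)$ as a bounded zero-order coefficient, so that the cited regularity machinery applies.

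\textbf{Main obstacle.} The chief technical difficulty is the interaction of two nonlocal operators of distinct differentiability orders $s_1>s_2$ and growth exponents $p\ge q$. In both the Caccioppoli and Moser estimates the $(-\Delta)_q^{s_2}$ contribution produces a Gagliardo seminorm in a strictly weaker fractional space than the one controlled by $(-\Delta)_p^{s_1}$; to close the iteration I would either absorb it through the embedding $W^{s_1,p}_0(\Om)\hookrightarrow W^{s_2,q}(\Om)$ on bounded sets (which holds since $s_1>s_2$ and $p\ge q$), or carry it as a lower-order perturbation with carefully tracked radius-dependent constants. A related complication is the nonlocal tail, which must be split off at each iteration step and absorbed using the $L^\infty$-control from Step~1 so that the accumulated constants remain summable.
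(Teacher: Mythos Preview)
Your Step~1 is essentially the paper's approach: a fractional De Giorgi iteration based on testing against truncations of $u$ and exploiting the gap $r<p_{s_1}^*$ to get the geometric decay of the level-set measures. The paper normalizes first and uses the explicit sequence $w_k=(v-1+2^{-k})^+$, but the mechanism is the same.

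Your Step~2, however, takes a genuinely different route from the paper. You propose to reach the weak Harnack inequality through the Moser scheme of Di~Castro--Kuusi--Palatucci (logarithmic Caccioppoli, testing against powers $v^\theta$, reverse H\"older chains). The paper instead follows the barrier-and-comparison approach of Iannizzotto--Mosconi--Squassina: it builds an explicit subsolution $w=\sigma L\,\phi_R+\chi_{B_R\setminus B_{R/2}}u$ out of a cut-off $\phi_R$ and the annular mass of $u$, computes $(-\Delta)_p^{s_1}w+\beta(-\Delta)_q^{s_2}w$ via a decomposition lemma (Lemma~\ref{decom}), and then invokes a comparison principle (Proposition~\ref{comp}) to force $u\ge w$ in the inner ball. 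The oscillation decay (Theorem~\ref{oscl}) is then obtained by iterating this weak Harnack together with tail estimates, much as you envisage. In the paper the condition $q\ge 2$ enters not through the Caccioppoli convexity inequality you cite, but through the elementary bound $a^{\gamma}-|a-b|^{\gamma-1}(a-b)\ge 2^{1-\gamma}b^{\gamma}$ with $\gamma=q-1\ge 1$, used to control the sign of the barrier's nonlocal remainder $h_q$.

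What each approach buys: your Moser route would give more quantitative control (explicit exponents in the reverse H\"older chain) and is closer to a self-contained PDE argument, but the mixed operator forces you to carry both the $s_1,p$ and $s_2,q$ seminorms through every Caccioppoli step and to balance two tail terms simultaneously, which is delicate. The paper's barrier approach sidesteps this: once $u\in L^\infty$, the right-hand side is bounded, the two operators contribute with the \emph{same sign} to the comparison, and the only place the second operator causes extra work is an additive correction $R^{(ps_1-qs_2)/(p-1)}$ in the oscillation estimate, which is lower order and absorbed by choosing $\alpha<(ps_1-qs_2)/(p-1)$.
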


  Regarding the existence and multiplicity of solutions of problem $(\mc P_\la)$, we prove existence of two non-trivial non-negative solutions of problem $(\mc P_\la)$ in the subcritical case for all $\ba >0$ and for small $\la$.  For the critical case we restricted ourselves into the case when the function $a(x)$ is continuous and $b(x)\equiv 1$ in $\Om$.  By using the fibering map analysis and minimizing the energy functional over some subsets of the Nehari manifol, we prove the existence of  at least two nontrivial non-negative solutions of $(\mc P_\la)$ provided $\la$ and $\ba $ small enough.   In the critical case the energy functional fails to satisfy the Palais-Smale condition globally, so in Lemma \ref{lemm3} we show that the functional satisfy the  Palais-Smale condition  below the first critical level.  The main difficulty in this case is that we do not have the explicit form of minimizers for $S$ which we overcome by using some optimal asymptotic estimates for $S$ provided in \cite{mosconi}. When $\de=q$ and $r\leq p^*_{s_1}$, we prove existence of one solution of $(\mc P_\la)$ using Mountain Pass Theorem. We remark that, to the best of our knowledge the critical case results are new even in the local case $(s_1=s_2=1)$.
 The main results of the paper regarding the existence and multiplicity of solutions are the following:
 \begin{Theorem}\label{pqthm1}
	Let $r < p^{*}_{s_1}$. Then there exists a constant $\la_0>0$ such that  for all $\ba>0$ and $\la \in(0, \la_0)$, $(\mc P_\la)$ has at least two non-negative non-trivial solutions.
 \end{Theorem}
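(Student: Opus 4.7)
I would work in $W := W_0^{s_1,p}(\Om)$ equipped with the Gagliardo seminorm $[\,\cdot\,]_{s_1,p}$, noting that $W \hookrightarrow W_0^{s_2,q}(\Om)$ continuously under the stated hypotheses on $s_i, p, q$ and that $W \hookrightarrow L^t(\Om)$ is compact for $1 \le t < p^*_{s_1}$. To produce non-negative solutions, replace the right-hand side of $(\mc P_\la)$ by its positive-part truncation, so the energy becomes
\begin{equation*}
J_\la(u) = \frac{1}{p}[u]_{s_1,p}^p + \frac{\ba}{q}[u]_{s_2,q}^q - \frac{\la}{\de}\int_\Om a(x)(u^+)^\de\, dx - \frac{1}{r}\int_\Om b(x)(u^+)^r\, dx;
\end{equation*}
any critical point is non-negative upon testing the Euler--Lagrange equation with $u^- := \max(-u,0)$ and invoking the standard monotonicity identity for $(-\De)_p^{s_1}$ and $(-\De)_q^{s_2}$. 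Because $\de < r$ and $r < p^*_{s_1}$ the functional is unbounded below on $W$, so I would restrict to the Nehari manifold $\mc N_\la = \{u \in W \setminus \{0\} : \langle J_\la'(u), u\rangle = 0\}$.

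The heart of the argument is the analysis of the fibering map $\phi_u(t) := J_\la(tu)$ for $t > 0$, whose derivative factors as $\phi_u'(t) = t^{\de-1}\bigl(\psi_u(t) - \la \int_\Om a(u^+)^\de\,dx\bigr)$ where
\begin{equation*}
\psi_u(t) = t^{p-\de}[u]_{s_1,p}^p + \ba t^{q-\de}[u]_{s_2,q}^q - t^{r-\de}\int_\Om b(u^+)^r\, dx.
\end{equation*}
Since $\de \le q \le p < r$, $\psi_u$ rises from $0$ at $t = 0^+$ to a unique interior maximum $M(u)$ on $(0,\infty)$ whenever $\int_\Om b(u^+)^r > 0$, and dropping the non-negative $\ba$-term yields a $\ba$-independent lower bound on $M(u)$ expressed purely in terms of $[u]_{s_1,p}$ and $\|b\|_\infty$. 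Combined with the H\"older estimate $\int_\Om a(u^+)^\de \le \|a\|_{L^{r/(r-\de)}}\|u\|_{L^r}^\de$ and the subcritical Sobolev embedding, this produces $\la_0>0$ independent of $\ba$ such that for all $\la \in (0,\la_0)$ the horizontal line $y = \la\int_\Om a(u^+)^\de$ lies strictly below $M(u)$ on the unit sphere of $W$. Hence $\phi_u'$ has exactly two positive zeros $t^+(u) < t^-(u)$, and $\mc N_\la$ splits cleanly as $\mc N_\la^+ \cup \mc N_\la^-$ (disjoint) according to the sign of $\phi_u''(1)$, while the degenerate stratum $\mc N_\la^0$ is empty.

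Once the splitting is established I would minimize $J_\la$ on each component. On $\mc N_\la^+$ one obtains $\inf J_\la < 0$ from the fact that $\phi_u(t^+(u)) < 0$; on $\mc N_\la^-$ the Nehari identity plus H\"older's inequality gives both a lower bound on $J_\la$ and $W$-boundedness of any minimizing sequence $\{u_n\}$. Passing to a weak limit $u_0$, the compact embeddings force the $L^\de$ and $L^r$ integrals to converge strongly; a Brezis--Lieb-type argument for both Gagliardo seminorms, together with the uniform convexity of $[\,\cdot\,]_{s_i,p}$ and $[\,\cdot\,]_{s_2,q}$ (valid for $p,q > 1$), then upgrades weak to strong convergence in $W$. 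Thus $u_0$ attains the infimum on the corresponding piece, and emptiness of $\mc N_\la^0$ lets a standard Lagrange multiplier argument promote each minimizer to a genuine critical point of $J_\la$ on $W$; the two solutions are distinct and non-trivial since they sit in disjoint components of $\mc N_\la$. The main obstacle will be the splitting lemma $\mc N_\la^0 = \emptyset$ uniformly in $\ba > 0$, since the two operators with different scaling exponents $p$ and $q$ make the shape of $\psi_u$ genuinely $\ba$-dependent and the sign-changing weights $a, b$ obstruct a direct estimate of $M(u)$; the remedy sketched above (discard the $\ba$-term to obtain a $\ba$-independent lower bound on $M(u)$) is the linchpin of the whole proof.
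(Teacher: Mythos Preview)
Your overall strategy---Nehari manifold, fibering maps, the $\beta$-independent splitting $\mc N_\la^0=\emptyset$, and minimization on $\mc N_\la^\pm$---is exactly the paper's route. Two points of departure: you impose non-negativity up front by truncating $|u|$ to $u^+$ in the energy, whereas the paper works with the even functional and only at the very end replaces a minimizer $u_\la$ by $|u_\la|$, checking via the fibering map that this does not raise the energy; and you propose to minimize directly on each component, while the paper uses Ekeland's variational principle together with an implicit-function-theorem lemma (your $\xi$ does not appear) to manufacture a minimizing sequence that is simultaneously a Palais--Smale sequence for the free functional, and then invokes a separate $(PS)_c$-compactness lemma.

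The second difference hides a genuine gap in your plan. From $u_n\in\mc N_\la$ and the subcritical compact embeddings you correctly get $[u_n]_{s_1,p}^p+\ba[u_n]_{s_2,q}^q\to L:=\la\int a(u_0^+)^\de+\int b(u_0^+)^r$, but weak lower semicontinuity only gives $[u_0]_{s_1,p}^p+\ba[u_0]_{s_2,q}^q\le L$; Brezis--Lieb and uniform convexity deliver strong convergence \emph{only once you know} the individual norms converge, i.e.\ only once you know $u_0\in\mc N_\la$. Nothing in your sketch rules out strict inequality (equivalently $\langle J_\la'(u_0),u_0\rangle<0$), and without $\mc J_\la'(u_n)\to 0$ you cannot pass to the limit in the Euler--Lagrange equation either. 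The paper's Ekeland step is precisely what closes this loop: it produces $\mc J_\la'(u_n)\to 0$, after which testing $\langle \mc J_\la'(u_n)-\mc J_\la'(u_0),\,u_n-u_0\rangle\to 0$ and the algebraic inequalities for $|a|^{p-2}a-|b|^{p-2}b$ force $[u_n-u_0]_{s_1,p}\to 0$ directly. You can salvage your direct-minimization route, but it requires an additional fibering/projection argument (if $\langle J_\la'(u_0),u_0\rangle<0$, locate $t_0$ with $t_0u_0$ in the correct component and compare $J_\la(t_0u_0)$ to $\liminf J_\la(t_0u_n)$ via weak l.s.c.\ and the extremal characterization of $t^\pm$); this is doable but needs to be spelled out, and you must also handle the sign cases for $\int a(u^+)^\de$ and $\int b(u^+)^r$ separately, not just the ``two positive zeros'' scenario.
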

  
 \begin{Theorem}\label{pqthm2}
	For $r=p^{*}_{s_1}$, let the function $a(x)$ be continuous in $\Om$ and $b(x)\equiv 1$ in $\Om$, then there exist positive constants $\La_0, \La_{00}$ and $\ba_{00} $ such that  
	\begin{enumerate}
		\item [(i)]
	for any $\la \in(0, \La_0)$ and $\ba>0$, $(\mc P_\la)$ admits at least one non-negative non-trivial solution. 
   \item [(ii)] For  any  $\la \in(0, \La_{00})$ and $\ba\in(0,\ba_{00})$, $(\mc P_\la)$  admits at least two non-negative non-trivial solutions, provided $1<\frac{n(p-1)}{n-ps_1}\le\de<q$. 
	\end{enumerate}
\end{Theorem}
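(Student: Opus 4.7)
The plan is to extend the Nehari manifold and fibering map analysis used in Theorem \ref{pqthm1} to the critical regime, combining it with the Palais--Smale compactness result below the first critical level (Lemma \ref{lemm3}) and the asymptotic estimates of \cite{mosconi} for the fractional Sobolev best constant $S$. Let $I_\la$ denote the energy functional associated with $(\mc P_\la)$ on the natural solution space $X$; for $u \ne 0$ consider the fibering map $\phi_u(t) := I_\la(t u)$ and decompose the Nehari manifold $\mc N_\la = \{u \ne 0 : \langle I_\la'(u), u \rangle = 0\}$ as $\mc N^+_\la \cup \mc N^-_\la \cup \mc N^0_\la$ by the sign of $\phi_u''(1)$. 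Using the strict ordering $\de < q \le p < p^*_{s_1}$, one shows, in parallel with Theorem \ref{pqthm1}, that for $\la$ smaller than a threshold one has $\mc N^0_\la = \{0\}$, each nontrivial direction hits exactly one point of each of $\mc N^\pm_\la$, and $I_\la$ is coercive on $\mc N_\la$ with $\theta^+_\la := \inf_{\mc N^+_\la} I_\la < 0 < \theta^-_\la := \inf_{\mc N^-_\la} I_\la$. Because $b \equiv 1$ and the concave term is odd in $u$, the whole analysis restricts to the non-negative cone by replacing $u$ with $|u|$.

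For part (i), minimize $I_\la$ on $\mc N^+_\la$. Since $\theta^+_\la < 0 < c^* := \frac{s_1}{n} S^{n/(p s_1)}$, where $c^*$ is the Palais--Smale threshold delivered by Lemma \ref{lemm3}, Ekeland's variational principle on the complete $C^1$-manifold $\overline{\mc N^+_\la}$ supplies a $(PS)_{\theta^+_\la}$ sequence for $I_\la$, which is strongly convergent by Lemma \ref{lemm3}. The limit $u_1 \ge 0$ is a nontrivial critical point of $I_\la$ belonging to $\mc N^+_\la$, and one takes $\La_0$ as the threshold ensuring that the Nehari decomposition above is well defined.

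For part (ii), the second solution is produced by minimizing $I_\la$ on $\mc N^-_\la$; the delicate point is to verify $\theta^-_\la < c^*$. Test with a Mosconi--Squassina family $u_\e$, $\e \in (0,1)$, concentrating at a point $x_0 \in \Om$ with $a(x_0) > 0$ (available by continuity of $a$). The estimates of \cite{mosconi} control $[u_\e]_{s_1,p}^p$, $\|u_\e\|_{p^*_{s_1}}^{p^*_{s_1}}$, $[u_\e]_{s_2,q}^q$ and $\int_\Om a(x) |u_\e|^\de \, dx$ by explicit powers of $\e$. Substituting these into $\max_{t \ge 0} I_\la(t u_\e)$ produces the principal term $c^*$, plus a non-negative $O(\ba \e^{\al_2})$ contribution from the $q$-Gagliardo seminorm, minus a strictly positive $\la$-gain of order $\e^{\al_1}$ from the concave term. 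The hypothesis $\de \ge \frac{n(p-1)}{n-p s_1}$ is precisely what ensures $\al_1 \le \al_2$, so the $\la$-gain dominates for $\ba$ small enough, giving $\max_{t\ge 0} I_\la(t u_\e) < c^*$ for $\la < \La_{00}$, $\ba < \ba_{00}$, and a suitable $\e$. Consequently $\theta^-_\la < c^*$, and Ekeland's principle combined with Lemma \ref{lemm3} produces a non-negative critical point $u_2 \in \mc N^-_\la$, distinct from $u_1$ because $\mc N^+_\la \cap \mc N^-_\la = \emptyset$.

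The main obstacle is the energy comparison $\theta^-_\la < c^*$ in part (ii): without an explicit extremal for $S$ in the fractional $p$-Laplacian setting, one must rely on the fine asymptotics of \cite{mosconi}, and the additional $q$-Gagliardo seminorm in $I_\la$ introduces a non-negative correction that competes with the $\la$-concave gain. The lower bound $\de \ge \frac{n(p-1)}{n-p s_1}$ and the joint smallness of $\la$ and $\ba$ are precisely what close this balance.
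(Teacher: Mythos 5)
Your overall strategy is the one the paper uses: Nehari/fibering decomposition, a $(PS)$ sequence at the level $\theta_\la$ (resp. $\theta_\la^-$) via Ekeland (Proposition \ref{propb2}), compactness below a critical level (Lemma \ref{lemm3}), and an energy estimate with the truncated Mosconi--Squassina bubbles to get the second solution. However, there is a genuine gap in how you identify the compactness threshold, and it propagates into both parts. In this problem the $(PS)_c$ condition does \emph{not} hold for all $c<\frac{s_1}{n}S^{\frac{n}{ps_1}}$: because the weak limit $u$ of a $(PS)$ sequence may be a nontrivial critical point with negative energy, Theorem \ref{thb2} only guarantees $\mc J_\la(u)\ge -C_\de\la^{\frac{p}{p-\de}}$, and Lemma \ref{lemm3} accordingly gives compactness only below the $\la$-dependent level $c_\infty=\frac{s_1}{n}S^{\frac{n}{ps_1}}-C_\de\la^{\frac{p}{p-\de}}$, with $C_\de$ as in \eqref{eqb27}. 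Consequently, for part (ii) it is not enough to prove $\sup_{t\ge0}\mc J_\la(tu_\e)<\frac{s_1}{n}S^{\frac{n}{ps_1}}$ as you propose; the concave-term gain must in addition beat the correction $C_\de\la^{\frac{p}{p-\de}}$ (and the seminorm error $O((\e/\kappa)^{\frac{n-ps_1}{p-1}})$ and the $\ba$-term). This is exactly where the paper couples the parameters: $\e=(\la^{\frac{p}{p-\de}})^{\frac{p-1}{n-ps_1}}$ and $\ba\le\e^{\al}$ with $\al>\frac{n-ps_1}{p-1}$, and the hypothesis $\de\ge\frac{n(p-1)}{n-ps_1}$ enters to ensure $1+\frac{p(p-1)}{(p-\de)(n-ps_1)}\big(n-\frac{n-ps_1}{p}\de\big)<\frac{p}{p-\de}$ (log-gain in the equality case), i.e. the gain $\la\int_{B_\kappa}|U_\e|^\de$ dominates $\la^{\frac{p}{p-\de}}$. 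Your attribution of this hypothesis to a comparison with the $\ba$-contribution is off the mark, and your claim that the $q$-Gagliardo seminorm of the bubble is itself $O(\e^{\al_2})$ is unsubstantiated and unnecessary: in the paper it is only known to be bounded (via Lemma \ref{cheeg}), which is why $\ba$ must be taken small as a power of $\e$ (hence of $\la$), giving $\ba_{00}=\La_{00}^{\frac{p}{p-\de}}$.

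The same misidentification affects part (i): since the relevant threshold is $c_\infty$ and not $\frac{s_1}{n}S^{\frac{n}{ps_1}}$, the inequality $\theta_\la^+<0$ alone does not put you in the compactness range unless $\la$ is also small enough that $c_\infty>0$; this is why the paper defines $\La_0=\min\{\ga_0,\la_0\}$ with $\ga_0$ chosen through \eqref{eqbp}, whereas your $\La_0$ only encodes $\mc N_\la^0=\emptyset$. Apart from this, your variant of minimizing directly on $\overline{\mc N_\la^+}$ (rather than on $\mc N_\la$ and then showing the minimizer lies in $\mc N_\la^+$, as the paper does) and your remark that one can restrict to $|u|$ are fine modulo the usual closedness/fibering-case caveats (for $\int_\Om a|u|^\de\,dx\le0$ only the $\mc N_\la^-$ point exists), but the threshold issue above is the step that would fail as written.
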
}

	\begin{Theorem}\label{pqthm3}
		Let $\de=q$ and assume functions $a(x)$ and $b(x)$ are continuous in $\Om$, then there exist positive constants $\la_*, \La_{*}$ and $\ba_{*}$ such that
		\begin{itemize}
			\item[(i)] for any $\la\in(0,\la_*)$, the problem $(\mc P_\la)$ has at least one solution, provided $r<p^*_{s_1}$.
			\item[(ii)] For any $\la\in (0,\La_{*})$ and $\ba\in (0,\ba_{*})$, $(\mc P_\la)$ has at least one solution, provided $1<\frac{n(p-1)}{n-ps_1}\le q\le p< r=p^*_{s_1}$.
		\end{itemize}
	\end{Theorem}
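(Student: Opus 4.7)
The plan is to obtain both solutions from the Mountain Pass Theorem applied to the energy functional
\begin{equation*}
	J_\la(u) = \frac{1}{p}[u]^p_{s_1,p} + \frac{\ba}{q}[u]^q_{s_2,q} - \frac{\la}{q}\int_\Om a(x)|u|^q\,dx - \frac{1}{r}\int_\Om b(x)|u|^r\,dx,
\end{equation*}
defined on the natural working space $W := W^{s_1,p}_0(\Om) \cap W^{s_2,q}_0(\Om)$. The key structural remark is that, since $\de=q$, the perturbation term $\la a|u|^{\de-2}u$ and the $(s_2,q)$-Gagliardo term share the \emph{same} homogeneity, so for $\la$ small the net $q$-order contribution is nonnegative and $J_\la$ exhibits a genuine mountain-pass geometry at the origin; no Nehari splitting as in Theorems~\ref{pqthm1}--\ref{pqthm2} is required, and it suffices to produce one non-trivial critical point at the MP level.

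For the MP geometry, the embeddings $W^{s_2,q}_0\hookrightarrow L^q(\Om)$ and $W^{s_1,p}_0\hookrightarrow L^r(\Om)$ together with continuity of $a,b$ on $\oline\Om$ yield
\begin{equation*}
	J_\la(u) \ge \frac{1}{p}[u]^p_{s_1,p} + \frac{\ba-C_1\la}{q}[u]^q_{s_2,q} - \frac{C_2}{r}[u]^r_{s_1,p},
\end{equation*}
so choosing $\la<\ba/C_1$ and using $r>p$ produces $J_\la\ge\al_0>0$ on a small sphere $\|u\|_W=\rho$; picking a non-negative $\phi\in C^\infty_c(\Om)$ supported where $b>0$ gives $J_\la(t\phi)\to-\infty$ as $t\to\infty$ because $r>p\ge q$. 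In case~(i), $r<p^*_{s_1}$ yields a compact embedding into $L^r(\Om)$, and combined with the $(S_+)$-property of the fractional $(p,q)$-Laplacian (cf.~\cite{Iannizzotto}) the full Palais--Smale condition holds; MPT then delivers a non-trivial critical point of $J_\la$ for every $\la<\la_*:=\ba/C_1$.

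For case~(ii), compactness is destroyed by the critical exponent, but Lemma~\ref{lemm3} (proved earlier in the paper) furnishes $(PS)_c$ strictly below the first critical level $c^*=\frac{s_1}{n}S^{n/(ps_1)}$. The essential task is thus to check that the MP minimax value $c_\la$ satisfies $c_\la<c^*$. I would test the minimax along a family of truncated, rescaled fractional instantons $U_\e$ concentrated at a point $x_0\in\Om$ with $a(x_0)>0$, exploiting the sharp asymptotics from \cite{mosconi}:
\begin{equation*}
	[U_\e]^p_{s_1,p} = S^{n/(ps_1)} + O\bigl(\e^{(n-ps_1)/(p-1)}\bigr),\qquad \|U_\e\|^{p^*_{s_1}}_{p^*_{s_1}} = S^{n/(ps_1)} + O\bigl(\e^{n/(p-1)}\bigr),
\end{equation*}
together with the lower bound $\int_\Om a\,U_\e^q\gtrsim\e^{n-q(n-ps_1)/p}$ (with a logarithmic correction in the borderline) which is valid \emph{precisely} when $q\ge\tfrac{n(p-1)}{n-ps_1}$. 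A direct computation of $\max_{t\ge 0}J_\la(tU_\e)$ then shows that the critical loss is absorbed by the subcritical gain from $\la a$ provided $\la<\La_*$, while the extra nonlocal obstruction $\tfrac{\ba}{q}[U_\e]^q_{s_2,q}$ is driven subleading by taking $\ba<\ba_*$ small, yielding $c_\la<c^*$ and hence (ii).

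The hard part is the critical-level estimate for~(ii): the classical Brezis--Nirenberg balancing between the critical loss and the subcritical gain has to be executed in the fractional $(p,q)$ setting, where the presence of the second nonlocal seminorm $(-\De)^{s_2}_q$ adds a genuinely new obstruction of order $[U_\e]^q_{s_2,q}$ with a different scaling in $\e$. Controlling this term is exactly what forces the joint smallness of $(\la,\ba)$ in part~(ii); by contrast, in~(i) the smallness of $\ba$ is unnecessary because subcriticality alone provides full compactness.
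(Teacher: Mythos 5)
Your blueprint follows the paper's Section 5 in all essentials (mountain--pass geometry, full Palais--Smale in the subcritical case, and in the critical case a $(PS)_c$ range below a threshold combined with a level estimate along the truncated minimizers of \cite{mosconi}, including the restriction $q\ge\frac{n(p-1)}{n-ps_1}$ and the smallness of $\ba$ needed to tame the extra term $\|u_{\e,\kappa}\|_{X_{q,s_2}}^q$), but the way you produce the mountain--pass geometry creates a genuine problem with the quantifiers. You absorb $\frac{\la}{q}\int_\Om a|u|^q\,dx$ into $\frac{\ba}{q}\|u\|_{X_{q,s_2}}^q$, which forces $\la<\ba/C_1$, i.e.\ a threshold for $\la$ proportional to $\ba$. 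In part (ii) you must in addition take $\ba\in(0,\ba_*)$ small, so the set of admissible pairs $(\la,\ba)$ your argument covers is contained in $\{\la<\ba/C_1\}$, which is not a product of intervals $(0,\La_*)\times(0,\ba_*)$: for tiny $\ba$ your geometry fails on almost all of $(0,\La_*)$, so the statement of (ii) is not reached. The coupling is also unnecessary: as in Lemma \ref{mpg}, discard the nonnegative $\ba$-term and estimate $\mc I_\la(u)\ge\|u\|_{X_{p,s_1}}^{q}\big(\tfrac1p\|u\|_{X_{p,s_1}}^{p-q}-\tfrac{C_2}{r}\|u\|_{X_{p,s_1}}^{r-q}-\tfrac{\la C_1}{q}\big)$; taking the sphere radius $t_0$ at the maximum of $f(t)=\tfrac1p t^{p-q}-\tfrac{C_2}{r}t^{r-q}$ and $\la_*$ with $f(t_0)>\la_*C_1/q$ gives $\mc I_\la\ge\eta>0$ on $\|u\|_{X_{p,s_1}}=t_0$ with $\la_*$ independent of $\ba$ (the functional may dip below $0$ inside the ball, but the mountain pass theorem only needs $\mc I_\la(0)=0<\eta$ together with a point beyond the sphere of negative energy). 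This is also what keeps part (i) uniform in $\ba$, in line with Theorem \ref{pqthm1}.

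A second, smaller inaccuracy: in part (ii) you aim at $c_\la<c^*=\frac{s_1}{n}S^{n/(ps_1)}$, but the compactness range you invoke (Lemma \ref{lemm3}, here Lemma \ref{lemm62}(ii)) is $c<c_\infty=\frac{s_1}{n}S^{n/(ps_1)}-C_q\,\la^{p/(p-q)}$, because weak limits of $(PS)$ sequences only satisfy the lower bound $\mc I_\la(u)\ge -C_q\,\la^{p/(p-q)}$ (Theorem \ref{thb2}). The level estimate must therefore beat $c_\infty$, not $c^*$; it is exactly this $\la$-dependent correction that dictates the choice $\e\sim\la^{\frac{p(p-1)}{(p-q)(n-ps_1)}}$ and produces the restriction $q\ge\frac{n(p-1)}{n-ps_1}$ in Lemmas \ref{lemm2} and \ref{lemm63}. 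With the target level corrected and the geometry decoupled from $\ba$ as above, your argument coincides with the paper's proof.
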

   \noindent Turning to the layout of the paper: In Section 2, we study the regularity results and prove interior H\"older regularity of weak solutions of $(\mc P_\la)$. In Section 3, we study the fibering maps and Nehari manifold associated to the problem $(\mc P_\la)$. We give fibering map analysis and prove some technical results. In Section 4, we give the proof of  Theorem \ref{pqthm1} and \ref{pqthm2}.  In Section 5, we give the proof of Theorem \ref{pqthm3}.

\section{Regularity results}
\setcounter{equation}{0}
In this section we give the variational frame work and interior regularity results for solutions of $(\mc P_\la)$. 
  For any open subset $\Om$ of $\mathbb{R}^n, 0<s<1$ and $1<p<\infty$, the fractional Sobolev space is defined as 
      \begin{align*}
          W^{s,p}(\Om)= \left\lbrace u \in L^p(\Om): \int_{\Om}\int_{\Om} \frac{|u(x)-u(y)|^p}{|x-y|^{n+sp}}~dxdy <+ \infty \right\rbrace
      \end{align*}
  \noi endowed with the norm 
      \begin{align*}
          \|u\|_{W^{s,p}(\Om)}:=  \|u\|_{L^p(\Om)}+\left(\int_{\Om}\int_{\Om} \frac{|u(x)-u(y)|^p}{|x-y|^{n+sp}}~dxdy  \right)^{\frac{1}{p}}.
      \end{align*}
  \noi Consider the space 
        \begin{align*}
             \tilde{X}_{p,s_1}:= \bigg \{u:\mathbb{R}^n\ra \mathbb{R} \text{ is measurable }: u \in L^{p}(\Om), \int_{Q} \frac{|u(x)-u(y)|^{p}}{|x-y|^{n+ps_1}}~dxdy < \infty \bigg \},
         \end{align*}
  \noi where $Q= \mathbb{R}^n \setminus (\Om^c\times \Om^c)$ and $0<s_1<1$, then $\tilde{X}_{p,s_1}$ is a Banach space with the norm 
         \begin{align*}
            \|u\|_{\tilde{X}_{p,s_1}}:=  \|u\|_{L^p(\Om)}+\left(\int_{Q} \frac{|u(x)-u(y)|^{p}}{|x-y|^{n+ps_1}}~dxdy  \right)^{\frac{1}{p}}. 
         \end{align*}
  \noi Let $X_{p,s_1}$ denotes the closure of $C_c^{\infty}(\Om)$ in $\tilde{X}_{p,s_1}$, then  $X_{p,s_1}$ is a uniformly convex Banach space with  norm \big(equivalent to $\|\cdot\|_{\tilde{X}_{p,s_1}}\big)$ 
         \begin{align}\label{eqb1}
              \|u\|_{X_{p,s_1}}:=  \left(\int_{Q} \frac{|u(x)-u(y)|^{p}}{|x-y|^{n+ps_1}}~dxdy  \right)^{\frac{1}{p}}.
         \end{align}
  \noi Notice that the integral in \eqref{eqb1}  can be extended to $\mathbb{R}^{2n}$ as $u=0$ a.e. on $\mathbb{R}^n\setminus \Om$. For the sake of completeness, we give the proof of the following technical result. The proof below is an adaptation of \cite[Lemma 2.6]{cheeger}.
 \begin{Lemma}\label{cheeg}
	Let $1<q\leq p<\infty$ and $0<s_2<s_1 <1$, then there exists a constant  $C=C(|\Om|,\;n,\; p,\;q,\;s_1,\;s_2)>0$ such that 
	\begin{align*}
	\|u\|_{X_{q,s_2}}\leq C \|u\|_{X_{p,s_1}}, \; \; \text{ for all }\; u \in X_{p,s_1}.
	\end{align*}
 \end{Lemma}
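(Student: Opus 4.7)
The plan is to control the Gagliardo seminorm at level $(q,s_2)$ by the one at level $(p,s_1)$ by splitting the domain of integration according to whether $|x-y|$ is small or large, and then using H\"older's inequality on the near--diagonal piece and pointwise pointwise $L^q$ estimates on the far piece. Since $u\equiv 0$ on $\Omega^c$, the integrand vanishes on $\Omega^c\times\Omega^c$, so we may equivalently integrate over $\mathbb{R}^{2n}$.

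First I would split
\[
\|u\|_{X_{q,s_2}}^q=\int_{Q}\frac{|u(x)-u(y)|^q}{|x-y|^{n+qs_2}}\,dx\,dy = I_1+I_2,
\]
where $I_1$ is the integral over $\{(x,y)\in Q:|x-y|<1\}$ and $I_2$ the integral over $\{(x,y)\in Q:|x-y|\geq 1\}$. For $I_1$, assume first $q<p$; write
\[
\frac{|u(x)-u(y)|^q}{|x-y|^{n+qs_2}}=\frac{|u(x)-u(y)|^q}{|x-y|^{(n+ps_1)q/p}}\cdot|x-y|^{(n+ps_1)q/p-(n+qs_2)},
\]
and apply H\"older with exponents $p/q$ and $p/(p-q)$. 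The first factor produces $\|u\|_{X_{p,s_1}}^q$, while the second gives $\int|x-y|^{\alpha}\,dx\,dy$ over the near--diagonal piece, with exponent $\alpha=-n+\frac{pq(s_1-s_2)}{p-q}>-n$ by $s_1>s_2$. Since one of $x,y$ belongs to the fixed bounded set $\Omega$, a change of variables $z=x-y$ reduces this to a finite constant $C(|\Omega|,n,p,q,s_1,s_2)$. In the borderline case $q=p$ the argument is simpler: for $|x-y|<1$ one has $|x-y|^{p(s_1-s_2)}\leq 1$, hence $|x-y|^{-(n+ps_2)}\leq|x-y|^{-(n+ps_1)}$ pointwise, and $I_1\leq\|u\|_{X_{p,s_1}}^p$ directly.

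For $I_2$, use $|u(x)-u(y)|^q\leq 2^{q-1}(|u(x)|^q+|u(y)|^q)$ and exploit that $u$ vanishes outside $\Omega$: by symmetry,
\[
I_2\leq 2^q\int_{\Omega}|u(x)|^q\Bigl(\int_{|x-y|\geq 1}\frac{dy}{|x-y|^{n+qs_2}}\Bigr)dx\leq C(n,q,s_2)\,\|u\|_{L^q(\Omega)}^q.
\]
Because $\Omega$ is bounded and $q\leq p$, H\"older gives $\|u\|_{L^q(\Omega)}\leq|\Omega|^{(p-q)/(pq)}\|u\|_{L^p(\Omega)}$, and the fractional Poincar\'e--Sobolev embedding for $X_{p,s_1}$ (valid since this space is the closure of $C_c^\infty(\Omega)$ in a bounded domain) yields $\|u\|_{L^p(\Omega)}\leq C\|u\|_{X_{p,s_1}}$.

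Combining $I_1$ and $I_2$, taking $q$-th roots and absorbing constants gives the claimed inequality. The only delicate point is verifying integrability of $|x-y|^{\alpha}$ near the diagonal in $I_1$, which is precisely where the strict ordering $s_2<s_1$ is used; the rest is bookkeeping of constants depending on $|\Omega|,n,p,q,s_1,s_2$.
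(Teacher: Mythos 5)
Your argument is correct, and on the key near--diagonal estimate it takes a genuinely different (and more self-contained) route than the paper. The paper, adapting \cite[Lemma 2.6]{cheeger}, first reduces to $u\in C_c^\infty(\Om)$, changes variables to $h=y-x$, splits into $|h|>1$ and $|h|\le 1$, and on the near piece applies H\"older in $x$ alone for each fixed $h$; the crucial ingredient there is the translation estimate \cite[Lemma A.1]{cheeger}, which bounds $\sup_{h\neq 0}\int_{\mb R^n}|u(x+h)-u(x)|^p|h|^{-ps_1}dx$ by $C(1-s_1)\|u\|_{X_{p,s_1}}^p$, after which the remaining factor $|h|^{-(n-(s_1-s_2)q)}$ is integrated over $|h|\le 1$ using $s_2<s_1$. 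You instead apply H\"older directly on the product measure over $\{(x,y)\in Q:|x-y|<1\}$ with the weight splitting $|x-y|^{-(n+qs_2)}=|x-y|^{-(n+ps_1)q/p}\cdot|x-y|^{(n+ps_1)q/p-(n+qs_2)}$, so that the first factor reproduces $\|u\|_{X_{p,s_1}}^q$ and the second reduces, since at least one of $x,y$ lies in the bounded set $\Om$, to $\int_{|z|<1}|z|^{-n+pq(s_1-s_2)/(p-q)}dz<\infty$ (your exponent computation is right, and this is exactly where $s_2<s_1$ enters); the borderline $q=p$ is handled by the trivial pointwise comparison. This dispenses with the external Lemma A.1 and with the density reduction, making the proof more elementary; what the paper's route buys is the explicit $(1-s_1)^{q/p}$ dependence of the constant inherited from \cite{cheeger}, which is irrelevant for the lemma as stated. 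The far-region estimate and the conclusion via H\"older on the bounded domain together with the embedding $\|u\|_{L^p(\Om)}\le C\|u\|_{X_{p,s_1}}$ are the same in both proofs, so your argument is complete.
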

 \begin{proof}
   It is enough to prove the lemma  for all $ u \in C_c^{\infty}(\Om).$   So, let $u \in C_c^{\infty}(\Om)$ then 
	 \begin{align*}
    	\|u\|_{X_{q,s_2}}^{q}= \int_{\mathbb{R}^n} \int_{\mathbb{R}^n}\frac{|u(x)-u(y)|^{q}}{|x-y|^{n+qs_2 }}~dxdy &=\int_{\mathbb{R}^n} \int_{\mathbb{R}^n}\frac{|u(x+h)-u(x)|^{q}}{|h|^{n+qs_2 }}~ dxdh \\
	    &:= I_1+I_2,
	 \end{align*}
  where 
	  \begin{align*}
         I_1= \ds \int_{|h|>1} \int_{\mathbb{R}^n}\frac{|u(x+h)-u(x)|^{q}}{|h|^{n+qs_2}}~dxdh \text{ and }I_2= \ds \int_{|h|\leq 1} \int_{\mathbb{R}^n}\frac{|u(x+h)-u(x)|^{q}}{|h|^{n+qs_2}}~dxdh.	
      \end{align*}
  Using H\"older's inequality, we obtain 
	  \begin{align*}
	    I_1 = \int_{|h|>1} \int_{\mathbb{R}^n}\frac{|u(x+h)-u(x)|^{q}}{|h|^{n+qs_2}}~dxdh 
	    & \leq C(q) \int_{|h|>1}\int_{\mathbb{R}^n}\frac{|u(x)|^{q}}{|h|^{n+qs_2}}~dxdh \\
	    & \leq C(q) \int_{|h|>1}\frac{dh}{{|h|^{n+qs_2}}}\int_{\Om}|u(x)|^{q}~dx\\
	    & \leq C(q,s_2) \int_{\Om}|u(x)|^{q}~dx \\
	    & = C(q,s_2,|\Om|) \|u\|_{L^{p}(\Om)}^{q}.
	\end{align*}
  Now, consider 
	 \begin{align*}
    	I_2=\ds \int_{|h|\leq 1}\int_{\mathbb{R}^n}\frac{|u(x+h)-u(x)|^{q}}{|h|^{n+qs_2}}~dxdh = \ds \int_{|h|\leq 1}\frac{dh}{|h|^{n-(s_1-s_2)q}} \int_{\mathbb{R}^n}\frac{|u(x+h)-u(x)|^{q}}{|h|^{qs_1}}~dx. \end{align*}
  Since $u \in C_c^{\infty}(\Om)$, it implies that for every $h$ with $|h|<1$, $u(x+h)-u(x)$ has compact support. As a result,
	 \begin{align*}
    	\int_{\mathbb{R}^n}\frac{|u(x+h)-u(x)|^{q}}{|h|^{qs_1}}~dx &
    	\leq C(|\Om|,p,q)\left(\int_{\mathbb{R}^n}\frac{|u(x+h)-u(x)|^{p}}{|h|^{s_1 p}}~dx \right)^{\frac{q}{p}}\\
	    & \leq C(|\Om|,p,q) \sup_{h>0}\left(\int_{\mathbb{R}^n}\frac{|u(x+h)-u(x)|^{p}}{|h|^{ps_1}}~dx \right)^{\frac{q}{p}}\\
	   & \leq C(|\Om|,p,q,n)(1-s_1)^{\frac{q}{p}}\|u\|_{X_{p,s_1}}^{q},
	\end{align*} 
  where the last inequality holds using \cite[Lemma A.1]{cheeger}. Thus,
	  \begin{align*}
	    I_2\leq C(|\Om|,p,q,n,s_1)\|u\|_{X_{p,s_1}}^{q} \ds \int_{|h|\leq 1}\frac{dh}{|h|^{n-(s_1-s_2)q}} \leq  C(|\Om|,p,q,n,s_1,s_2)\|u\|_{X_{p,s_1}}^{q}.
	  \end{align*}
  \noi Therefore using Poincar\'e inequality, we deduce that  
	  \begin{align*}
	    \|u\|_{X_{q,s_2}}^{q}\leq  C(|\Om|,p,q,n,s_1,s_2)\left( \|u\|_{L^p(\Om)}^{q}+\|u\|_{X_{p,s_1}}^{q}\right)\leq  C(|\Om|,p,q,n,s_1,s_2)\|u\|_{X_{p,s_1}}^{q}.
	  \end{align*}  
	\QED
\end{proof}
  \textbf{Notations:} For our convenience, we denote 
  \begin{align*}
     A_p(u,v)=\ds \int_{Q}\frac{|u(x)-u(y)|^{p-2}(u(x)-u(y))(v(x)-v(y))}{|x-y|^{n+p s_1}}~ dxdy,\;\text{ for all } u,v \in X_{p,s_1}, \text{ and}\\
    A_q(u,v)=\ds \int_{Q}\frac{|u(x)-u(y)|^{q-2}(u(x)-u(y))(v(x)-v(y))}{|x-y|^{n+q s_2}}~ dxdy,\;\text{ for all } u,v \in X_{q,s_2}. 
   \end{align*}
 From \cite{goyal}, we have continuous embedding of $X_{p,s_1}$ into $L^m(\Om)$ for $1\le m\le p^*_{s_1}$, therefore we define
    \begin{equation*}
    	S_m=\inf_{u\in X_{p,s_1}\setminus\{0\}} \frac{\|u\|_{X_{p,s_1}}^{p}}{\|u\|_{L^{m}(\Om)}^{p}}.
    \end{equation*}
  For the sake of convenience, we denote $S_{p^*_{s_1}} =S$.

 \begin{Definition}
	A function $u \in X_{p,s_1}$ is said to be  a solution of $(\mc P_\la)$ if $u$ satisfies 
	\begin{align*}
	 A_p(u,v)+ \ba A_q(u,v) -\la\int_{\Om}a(x)|u|^{\de-2}uv~dx - \int_{\Om}b(x)|u|^{r-2}uv~dx=0 \; \text{for all}\; v \in X_{p,s_1}.
	\end{align*}
 \end{Definition}
 The   Euler functional $\mc {J}_\la: X_{p,s_1} \ra\mb R$ associated to the problem $(\mc P_\la)$ is given by
 \[\mc {J}_\la(u)= \frac{1}{p}\|u\|_{X_{p,s_1}}^{p}+ \frac{\ba}{q}\|u\|_{X_{q,s_2}}^{q} -\frac{\la}{\de}\int_{\Om}a(x)|u|^{\de} ~dx -\frac{1}{r}\int_{\Om}b(x)|u|^{r} ~dx.\]
Now we prove the following $L^\infty$ estimate for weak solutions of $(\mc P_\la)$ in the subcritical case. 
 \begin{Theorem}\label{linfty}
	Let $r<p^{*}_{s_1}$ and $u$ be a weak solution of the problem $(\mc P_\la)$,
	then $u \in L^{\infty}(\Om).$
 \end{Theorem}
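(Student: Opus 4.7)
The plan is to establish the $L^\infty$ estimate by the fractional De Giorgi iteration, adapted from the single-operator case (cf.~\cite{Iannizzotto}) to the present $(p,q)$-setting. Two structural observations drive the argument: the $\ba(-\De)^{s_2}_q u$ contribution enters the relevant energy inequality with a favorable sign and can be discarded, while the concave term $\la a(x)|u|^{\de-2}u$ is controlled by the integrability $a\in L^{r/(r-\de)}(\Om)$ together with the strict subcriticality gap $p^*_{s_1}-r>0$.

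I would first reduce to proving $u_+\in L^\infty(\Om)$, the bound on $u_-$ then following by replacing $u$ with $-u$. Fix $\al>0$ large (to be determined) and set
\[
k_n=\al(1-2^{-n-1}),\quad U_n=(u-k_n)_+,\quad A_n=\{u>k_n\},\qquad n\in\N.
\]
By Lemma~\ref{cheeg}, each $U_n$ lies in $X_{p,s_1}\cap X_{q,s_2}$ and is an admissible test function. The pointwise monotonicity inequality
\[
|s-t|^{p-2}(s-t)\bigl((s-k)_+-(t-k)_+\bigr)\ge \bigl|(s-k)_+-(t-k)_+\bigr|^p,\qquad s,t\in\R,\; k>0,
\]
and its non-negative $q$-analogue, plugged into the weak form of $(\mc P_\la)$ tested against $U_{n+1}$, yield
\[
\|U_{n+1}\|_{X_{p,s_1}}^p\le \la\int_{A_{n+1}}|a|\,u^{\de-1}\,U_{n+1}\,dx+\int_{A_{n+1}}|b|\,u^{r-1}\,U_{n+1}\,dx.
\]

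Next I would convert this into a recursion for $V_n:=\|U_n\|_{L^{p^*_{s_1}}(\Om)}^{p^*_{s_1}}$. On $A_{n+1}$ one has $U_n\ge \al 2^{-n-2}$, whence $u\le C\,2^n U_n$, and Chebyshev gives $|A_{n+1}|\le (C\,2^n/\al)^{p^*_{s_1}}V_n$. Applying H\"older's inequality with the exponent pair $(r/(r-\de),r/\de)$ (tailored to $a\in L^{r/(r-\de)}(\Om)$) to the concave term and using $b\in L^\infty(\Om)$ for the convex one, and then invoking the continuous Sobolev embedding $X_{p,s_1}\hookrightarrow L^{p^*_{s_1}}(\Om)$, one arrives at a recursion of the form
\[
V_{n+1}\le C\,C_0^{\,n}\,\al^{-\kappa}\,V_n^{1+\sg},
\]
with constants $C,C_0>1$ and exponents $\kappa,\sg>0$ depending only on $n,p,q,s_1,s_2,r,\de,\la,\|a\|_{L^{r/(r-\de)}(\Om)},\|b\|_{L^\infty(\Om)}$. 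The strict subcriticality $r<p^*_{s_1}$ is exactly what forces $\sg>0$. A standard fast-decay iteration lemma then implies $V_n\to 0$ once $V_0\le \|u\|_{L^{p^*_{s_1}}}^{p^*_{s_1}}$ (finite by the Sobolev embedding) falls below the threshold $C^{-1/\sg}C_0^{-1/\sg^2}\al^{\kappa/\sg}$; this is arranged by choosing $\al$ large enough, so that $u\le \al$ a.e.\ in $\Om$.

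The main obstacle is to combine the two right-hand side contributions into a single super-linear recursion: the naive H\"older estimate of the $a$-term gives an exponent $\de p^*_{s_1}/(pr)$ that may fail to exceed $1$ when $\de$ is close to $1$ and $r$ is close to $p^*_{s_1}$. The clean workaround is a preliminary pointwise Young-type splitting
\[
\la|a(x)|\,u^{\de-1}\le \e\,u^{r-1}+C(\e,\la)\,|a(x)|^{(r-1)/(r-\de)},
\]
absorbing the $\e u^{r-1}$ piece into the convex term and treating the remaining lower-order tail via $|a|^{(r-1)/(r-\de)}\in L^{r/(r-1)}(\Om)$, which is precisely the integrability provided by the assumption $a\in L^{r/(r-\de)}(\Om)$. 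This reduces the iteration to a single exponent $1+\sg$ driven by $\sg=p^*_{s_1}/p-1>0$; the $\ba$-term plays no role in the final bound thanks to its sign, so neither $\ba$ nor $s_2$ enters the threshold for $\al$.
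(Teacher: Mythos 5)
Your overall strategy is the same as the paper's (fractional De Giorgi iteration, discarding the $\ba(-\De)^{s_2}_q$-contribution by its sign, with subcriticality feeding the iteration), but the step where you treat the concave term has a genuine gap. The Young splitting $\la|a(x)|u^{\de-1}\le \e u^{r-1}+C(\e,\la)|a(x)|^{(r-1)/(r-\de)}$ is correct, yet it does not ``reduce the iteration to a single exponent $1+\sg$''. The leftover datum $g:=C(\e,\la)|a|^{(r-1)/(r-\de)}$ lies only in $L^{r/(r-1)}(\Om)$, and its contribution $\int_{A_{n+1}}g\,U_{n+1}\,dx$ is linear in the truncation; estimating it by H\"older against $\|g\|_{L^{r/(r-1)}}$, then Chebyshev and Sobolev, gives a right-hand side term of order $C_0^{\,n}\,\al^{-(p^*_{s_1}-r)/r}\,V_n^{1/r}$, hence a term $C_0^{\,n}\,\al^{-\kappa''}\,V_n^{p^*_{s_1}/(pr)}$ in the recursion for $V_{n+1}$. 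The exponent $p^*_{s_1}/(pr)$ is $\le 1$ whenever $r\ge p^*_{s_1}/p=\tfrac{n}{n-ps_1}$, which is perfectly compatible with $p<r<p^*_{s_1}$ (e.g.\ any $r$ close to $p^*_{s_1}$). A sublinear-exponent term carrying the geometric factor $C_0^{\,n}$ breaks the fast-decay lemma you invoke: if $V_{n+1}\le C_0^{\,n}A\,V_n^{\theta}$ with $\theta<1$, the ansatz $V_n\le V_0\eta^n$ forces $(C_0\,\eta^{\theta-1})^n\le A^{-1}V_0^{1-\theta}\eta$, which fails for large $n$ no matter how large $\al$ (hence how small $V_0$) is taken; in fact the recursion then only yields bounds growing like $C_0^{\,n/(1-\theta)}$. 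So the very obstacle you identified (the naive exponent $\de p^*_{s_1}/(pr)$) is displaced by your workaround, not removed, and the iteration as written does not close.

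For comparison, the paper avoids this by a different bookkeeping: it normalizes $v=(\rho\|u\|_{L^r(\Om)})^{-1}u$, takes truncations $w_k=(v-1+2^{-k})^+$ whose levels accumulate at $1$, and iterates $U_k=\|w_k\|_{L^r(\Om)}^r$ rather than an $L^{p^*_{s_1}}$-quantity. The nonlinearity is split along $\{u\le 1\}$ and $\{u>1\}$: on $\{u\le 1\}$ one uses $w_{k+1}\le 2^{-(k+1)}$ together with Chebyshev, $|\Om_{k+1}|\le 2^{(k+1)r}U_k$, while on $\{u>1\}$ one uses $u^{\de-1}\le u^{r-1}$ so the concave piece merges into the convex one (the weights being absorbed into a constant $C_\la$); the superlinear exponent $1+\tfrac{rs_1}{n}$ then comes from combining the factor $r/p$ (converting $\|w_{k+1}\|_{X_{p,s_1}}^p$ into $U_{k+1}$) with the measure factor $|\Om_{k+1}|^{1-r/p^*_{s_1}}$, so no exponent $\le 1$ ever appears. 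If you wish to keep your scheme, note that on $A_{n+1}$ one already has $u>1$ once $\al\ge 2$, so $u^{\de-1}\le u^{r-1}$ there and the concave term can be merged into the convex one at the price of the weight $\la|a|$; making that step rigorous requires either boundedness (or higher integrability) of $a$, or a preliminary bootstrap raising the summability of $u$ --- as it stands, the treatment of the concave term is the missing piece.
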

 \begin{proof}
  	Let $u$ be a weak solution of $(\mc P_\la)$ such that $u^+\neq 0$. Let $\rho\ge max\{1,\|u\|_{L^r(\Om)}^{-1}\}$ and $v=(\rho\|u\|_{L^r(\Om)})^{-1}u$. Then, $v\in X_{p,s_1}$ and $\|v\|_{L^r(\Om)}=\rho^{-1}$.
	For all $k\in\mb N$, we define $w_k=(v-1+2^{-k})^{+}$ and $w_0=v^+$, so $w_k\in X_{p,s_1}$. Moreover,
	   \begin{align*}
	     0\le w_{k+1}(x)\le w_k(x) \text{ a.e. in } \Om \ \text{ for all } k\in\mb N \text{ and } w_k(x)\ra (v(x)-1)^+ \ a.e. \ in \ \Om. \end{align*}
   Set $U_k=\|w_k\|^r_{L^r(\Om)}$, then 
	    $ \{w_{k+1}>0\}\subseteq \{w_k> 2^{-(k+1)}\}, \ v(x)<(2^{k+1}-1)w_k(x) \ \text{ for all } x\in\{w_{k+1}>0\} $  and from Chebyshev inequality, we have
	       \[|\{w_{k+1}>0\}| \leq 2^{(k+1)r}U_k.\]
   Using the inequality  $$|\xi^+-\eta^+|^l\leq |\xi-\eta|^{p-2}(\xi-\eta)(\xi^+-\eta^+),\ \text{for } \xi, \eta\in\mb R \text{ and } l>1,$$
	it follows that
	\[
	  \|w_{k+1}\|_{X_{p,s_1}}^{p}= \ds\int_{\mb R^{2n}} \frac{|w_{k+1}(x)-w_{k+1}(y)|^{p}}{|x-y|^{n+ps_1}}dxdy \le A_p(v,w_{k+1}) =(\rho\|u\|_{L^r(\Om)})^{1-p}A_p(u, w_{k+1}) \]
	   and \[ 0\le \|w_{k+1}\|_{X_{q,s_2}}^{q}\le A_q(v,w_{k+1}) =(\rho\|u\|_{L^r(\Om)})^{1-q} A_q(u, w_{k+1}).\]
  Set $\Om_k =\Om\cap\{x\in\Om :w_k(x)>0\}$. Using the fact $u$ is a weak solution of $(\mc P_\la)$, we deduce that
	  \begin{align}\label{eqb33}
	    \|w_{k+1}\|_{X_{p,s_1}}^{p}&\le (\rho\|u\|_{L^r(\Om)})^{1-p}A_p(u, w_{k+1})+ (\rho\|u\|_{L^r(\Om)})^{1-p}A_q(u, w_{k+1}) \nonumber \\
	    &=
	    (\rho\|u\|_{L^r(\Om)})^{1-p}\left(\ds\int_{\Om_{k+1}}(\la a(x)u^{\de-1}+b(x)u^{r-1})w_{k+1}(x)dx \right) \nonumber\\
        &=(\rho\|u\|_{L^r(\Om)})^{1-p} \left(\ds\int_{\Om_{k+1}\cap\{u(x)\le 1\}}+ \ds\int_{\Om_{k+1}\cap\{u(x)>1\}} \right)(\la a(x)u^{\de-1}+b(x)u^{r-1})w_{k+1}dx  \nonumber\\
    	&\le C_\la(\rho\|u\|_{L^r(\Om)})^{1-p} \left( \ds\int_{\Om_{k+1}\cap\{u(x)\le 1\}} u^{\de-1}(x)w_{k+1}dx \right.\nonumber\\
    	&\qquad \qquad \qquad\qquad\qquad \left.
    	 +\ds\int_{\Om_{k+1}\cap\{u(x)>1\}} u^{r-1}(x)w_{k+1}dx \right).
	 \end{align}
   Noting that $(\rho\|u\|_{L^r(\Om)})^{-1}\le 1$, we have $v(x)-1+2^{-(k+1)}\le 2^{-(k+1)}$ whenever $u(x)\le 1$, that is $w_{k+1}(x)\leq 2^{-(k+1)}$.
   Also, $u(x)=(\rho\|u\|_{L^r(\Om)})v(x)< (\rho\|u\|_{L^r(\Om)})(2^{k+1}-1)w_k$ for all $x\in\Om_{k+1}$.
   Thus, \eqref{eqb33} reduces to
	 \begin{equation}\label{eqb35}
	 \begin{aligned}
	   \|w_{k+1}\|_{X_{p,s_1}}^{p}&\le C_\la (\rho\|u\|_{L^r(\Om)})^{1-p}\left(2^{-(k+1)}|\Om_{k+1}|+ 
	   (\rho\|u\|_{L^r(\Om)})^{r-1}(2^{k+1}-1)^{r-1}\ds\int_{\Om_{k+1}} w_k(x)^rdx \right) \\
	   &\le  C_\la (\rho\|u\|_{L^r(\Om)})^{1-p}\left(2^{(r-1)(k+1)}U_k+ 2^{(k+1)(r-1)}(\rho\|u\|_{L^r(\Om)})^{r-1}U_k\right) \\ &\le 2\;C_\la\; 2^{(r-1)(k+1)} (\rho\|u\|_{L^r(\Om)})^{r-p}\;U_k.
	 \end{aligned}
	\end{equation}
  With the help of \eqref{eqb35} and Sobolev embedding, we have
	 \begin{equation}\label{eqb36}
	  \begin{aligned}
	    U_{k+1}=\|w_{k+1}\|_{L^r(\Om)}^r&\le \left(\ds\int_\Om w_{k+1}^{p^*_{s_1}}\right)^\frac{r}{p^*_{s_1}}|\Om_{k+1}|^{1-\frac{r}{p^*_{s_1}}} \\
	   &\le \left(S^{-1}\|w_{k+1}\|_{X_{p,s_1}}^{p} \right)^\frac{r}{p} |\Om_{k+1}|^{1-\frac{r}{p^*_{s_1}}} \\
	  &\le S^\frac{-r}{p} \left(2C_\la 2^{(r-1)(k+1)} (\rho\|u\|_{L^r(\Om)})^{r-p}U_k  \right)^\frac{r}{p}\left(2^{(k+1)r}U_k\right)^{1-\frac{r}{p^*_{s_1}}} \\
	  &\leq C^k (\rho\|u\|_{L^r(\Om)})^{\frac{r^2}{p}-r} U_k^{1+\frac{rs_1}{n}},
	\end{aligned}
	\end{equation}
  where $C>1$ independent of $k$.
  Let $\eta= C^{-\frac{n}{rs_1}}\in (0,1)$ and define $\ga := \frac{r^2s_1}{n}+r- \frac{r}{p} >0$. Choose $\rho$ such that 
   $$ \rho\ge \max\bigg\{1,\|u\|_{L^r(\Om)}^{-1}, \big((\|u\|_{L^r(\Om)})^{\frac{r^2}{p}-r}\eta^{-1}\big)^\frac{1}{\ga}, C^\frac{n^2}{(rs_1)^2}\bigg\}.$$
   We claim that $U_k\le\frac{\eta^k}{\rho^r}$, which we prove by induction.
	For $k=0$, we have $U_0=\|v^+\|_{L^r(\Om)}^r\leq \|v\|_{L^r(\Om)}^r=\rho^{-r}$. 
	Let us assume $U_k\le\frac{\eta^k}{\rho^r}$ for some $k\in\mb N$, then using  \eqref{eqb36}, we have
	   \begin{align*}
	    U_{k+1}&\leq C^k (\rho\|u\|_{L^r(\Om)})^{\frac{r^2}{p}-r} U_k^{1+\frac{rs_1}{n}}\\ &\leq C^k (\rho\|u\|_{L^r(\Om)})^{\frac{r^2}{p}-r}\left(\frac{\eta^k}{\rho^r}\right)^{1+\frac{rs_1}{n}}\\&
     	\leq \frac{\eta^k}{\rho^r} (\|u\|_{L^r(\Om)})^{\frac{r^2}{p}-r} \rho^{-\ga}
	    \leq \frac{\eta^{k+1}}{\rho^r}.
	   \end{align*}
  Hence the claim follows.\\
  Now by the claim we get $U_k\ra 0$ as $k\ra\infty$, which gives us $w_k(x)\ra 0$ \text{a.e.} in $\Om$, hence $v(x)\le 1$ a.e. in $\Om$ and similar arguments yield $-v(x)\le 1$ a.e. in $\Om$. Thus $\|v\|_{L^\infty(\Om)}\le 1$, which implies $\|u\|_{L^\infty(\Om)}\le\rho\|u\|_{L^r(\Om)}$, that is $u\in L^\infty(\Om)$.\QED
  \end{proof}
    \begin{Definition}
    	Let $\Om\subset\mb R^n$ be a bounded set. Define
	      \begin{align*}
	        \widetilde{W}^{s,p}(\Om):= \bigg\{u\in L^p_{\rm loc}(\mb R^n): \exists\;U\Supset\Om \ s.t.\ \|u\|_{W^{s,p}(U)}+\int_{\mb R^n} \frac{|u(x)|^{p-1}}{(1+|x|)^{n+sp}}dx <\infty \bigg\}.
	  \end{align*}\end{Definition}
     \begin{Lemma}\label{decom}
  	 Let $\Om\subset\mb R^n$ be a bounded domain, and $u\in\widetilde{W}^{s,p}(\Om)$, $v\in L^1_{loc}(\mb R^n)$ are such that 
  		\begin{align}\label{eqbv}
  	     	dist(K, \Om)>0,\quad \int_{\Om^c}\frac{|v(x)|^{p-1}}{(1+|x|)^{n+ps}}dx<\infty,
  		\end{align}
  	 where $K=\text{supp(v)}$. Define 
  		{\small\begin{align*}
  			h(x)=2 \int_{K} \frac{|u(x)-u(y)-v(y)|^{p-2}(u(x)-u(y)-v(y))-|u(x)-u(y)|^{p-2}(u(x)-u(y))}{|x-y|^{n+ps}}dy.
  			\end{align*}\small}
  	  Then, $u+v\in\widetilde{W}^{s,p}(\Om)$ and $(-\Delta)^s_p(u+v)=(-\Delta)_p^s
  		u +h$ weakly in $\Om$. \end{Lemma}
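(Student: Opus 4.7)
The plan is to split the weak-form double integral for $(-\De)^s_p(u+v)$ according to the support of $v$, separately handling membership in $\widetilde{W}^{s,p}(\Om)$, the algebraic decomposition that produces $h$, and the integrability bookkeeping. First, since $\mathrm{dist}(K,\Om)>0$, I pick an open $U$ with $\Om\Subset U\Subset\mb R^n\setminus K$. Then $v\equiv 0$ on $U$, hence $u+v=u\in W^{s,p}(U)$. The tail bound for $|u+v|^{p-1}$ follows from the elementary inequality $|a+b|^{p-1}\le C(p)(|a|^{p-1}+|b|^{p-1})$ combined with the tail control on $u$ from $u\in\widetilde{W}^{s,p}(\Om)$ and the assumption \eqref{eqbv} on $v$. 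This gives $u+v\in\widetilde{W}^{s,p}(\Om)$.

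Next, fix $\varphi\in C_c^\infty(\Om)$. The weak-form integrand $\frac{|w(x)-w(y)|^{p-2}(w(x)-w(y))(\varphi(x)-\varphi(y))}{|x-y|^{n+ps}}$ is symmetric under $(x,y)\leftrightarrow(y,x)$ and vanishes on $\Om^c\times\Om^c$ (where $\varphi\equiv 0$). Hence the integral over $\mb R^{2n}$ equals $\iint_{\Om\times\Om}+2\iint_{\Om\times\Om^c}$. On $\Om\times\Om$, $v\equiv 0$, so the $u+v$ integrand coincides with the one for $u$. On $\Om\times\Om^c$, $v(x)=0$ and $\varphi(y)=0$, so $(u+v)(x)-(u+v)(y)=u(x)-u(y)-v(y)$. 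Writing the analogous decomposition for $u$ and subtracting, the $\Om\times\Om$ pieces cancel and the remainder is
\begin{align*}
2\iint_{\Om\times\Om^c}\frac{\big[|u(x)-u(y)-v(y)|^{p-2}(u(x)-u(y)-v(y))-|u(x)-u(y)|^{p-2}(u(x)-u(y))\big]\varphi(x)}{|x-y|^{n+ps}}\,dxdy.
\end{align*}
This integrand vanishes wherever $v(y)=0$; since $K\subset\Om^c$ it is supported on $\Om\times K$, and Fubini identifies the integral with $\int_\Om h(x)\varphi(x)\,dx$, giving the claimed weak identity.

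The main obstacle is justifying absolute integrability so that Fubini and the manipulations above are rigorous, and incidentally that $h\in L^1_{\rm loc}(\Om)$. I would use the classical inequality $\big||\alpha|^{p-2}\alpha-|\beta|^{p-2}\beta\big|\le C(p)|\alpha-\beta|(|\alpha|+|\beta|)^{p-2}$ for $p\ge 2$, and $C(p)|\alpha-\beta|^{p-1}$ for $1<p<2$, applied with $\alpha=u(x)-u(y)-v(y)$ and $\beta=u(x)-u(y)$, so that $|\alpha-\beta|=|v(y)|$. Together with $|x-y|\ge d:=\mathrm{dist}(K,\Om)>0$ and $|x-y|\ge c(1+|y|)$ for $y\in K$ and $x$ in a compact subset of $\Om$ (valid because $\Om$ is bounded), the integrand over $\mathrm{supp}(\varphi)\times K$ is dominated by a sum of terms of the form $|\varphi(x)|\,|v(y)|\big(|u(x)|^{p-2}+|u(y)|^{p-2}+|v(y)|^{p-2}\big)(1+|y|)^{-(n+ps)}$. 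Finiteness then follows from the $L^p$-regularity of $u$ on $U$, the tail hypothesis \eqref{eqbv} on $v$ and on $u$, and a H\"older inequality to control the mixed terms $|u(y)|^{p-2}|v(y)|$ and $|u(x)|^{p-2}|v(y)|$ against the weight $(1+|y|)^{-(n+ps)}$.
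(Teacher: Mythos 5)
Your proposal is correct and follows essentially the same route as the paper's proof: the same choice of an intermediate open set $U$ with $\Om\Subset U\Subset K^c$ to get $u+v=u$ in $U$ and the tail bound, followed by the same symmetric splitting of the double integral over $\mb R^{2n}$ into the $\Om\times\Om$ part (which cancels) and twice the $\Om\times\Om^c$ part, identified with $\int_\Om h\varphi\,dx$; the paper then only adds a one-line density argument to pass from $C_c^\infty(\Om)$ to $W^{s,p}_0(\Om)$ test functions. Your explicit integrability/Fubini justification is a detail the paper leaves implicit, but it does not change the structure of the argument.
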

  	\begin{proof} 
  	 Since $u\in\widetilde{W}^{s,p}(\Om)$, there exists an open set $U\Supset\Om$ such that 
  		\begin{equation} \label{eqbu}
  		\|u\|_{W^{s,p}(U)}+\ds\int_{\mb R^n} \frac{|u(x)|^{p-1}}{(1+|x|)^{n+sp}}dx <\infty.\end{equation} 
  	 Without loss of generality assume $\Om\Subset U\Subset K^c$, then it is easy to observe that $u+v=u$ in $U$, hence $u+v\in W^{s,p}(U)$. Moreover, using \eqref{eqbu} and \eqref{eqbv}, we deduce that
  		\begin{align*}
  		 \int_{\mb R^n} \frac{|u(x)+v(x)|^{p-1}}{(1+|x|)^{n+sp}}dx \le C\left(\int_{\mb R^n} \frac{|u(x)|^{p-1}}{(1+|x|)^{n+sp}}dx+\int_{K} \frac{|v(x)|^{p-1}}{(1+|x|)^{n+sp}}dx   \right)<\infty.
  		\end{align*}
  	 For any $\phi\in C_c^\infty(\Om)$, we have
  		\begin{align*}
  		   \int_{\mb R^n}\int_{\mb R^n}& \frac{|u(x)+v(x)-u(y)-v(y)|^{p-2}(u(x)+v(x)-u(y)-v(y))(\phi(x)-\phi(y))}{|x-y|^{n+sp}}dxdy \\
  		   =& \int_{\Om}\int_{\Om} \frac{|u(x)-u(y)|^{p-2}(u(x)-u(y))(\phi(x)-\phi(y))}{|x-y|^{n+sp}}dxdy  \\
  		   &+ \int_{\Om}\int_{\Om^c} \frac{|u(x)-u(y)-v(y)|^{p-2}(u(x)-u(y)-v(y))\phi(x)}{|x-y|^{n+sp}} dxdy \\
  		   &- \int_{\Om^c}\int_{\Om} \frac{|u(x)+v(x)-u(y)|^{p-2}(u(x)+v(x)-u(y))\phi(y)}{|x-y|^{n+sp}} dxdy \\
  		   =&  \int_{\mb R^n}\int_{\mb R^n} \frac{|u(x)-u(y)|^{p-2}(u(x)-u(y))(\phi(x)-\phi(y))}{|x-y|^{n+sp}}dxdy \\
  		  &- 2  \int_{\Om}\int_{\Om^c} \frac{|u(x)-u(y)|^{p-2}(u(x)-u(y))\phi(x)}{|x-y|^{n+sp}} dxdy \\
  		  &+ 2 \int_{\Om}\int_{\Om^c} \frac{|u(x)-u(y)-v(y)|^{p-2}(u(x)-u(y)-v(y))\phi(x)}{|x-y|^{n+sp}} dxdy. 
  		\end{align*}
  	Since $C_c^\infty(\Om)$ is dense in $W^{s,p}_0(\Om)$, we obtain
  		\begin{align*}
  		(-\Delta)_p^s(u+v)= (-\Delta)_p^s u +h \quad \text{weakly in } \Om.
  		\end{align*}\QED
  \end{proof}       
  Let $\langle ._,.\rangle_{s,p,\Om}$ denotes the duality pair between $W^{s,p}_0(\Om)$ and $W^{-s,p'}(\Om)$. For $f\in W^{-s,p'}(\Om)$, we say $(-\Delta)_p^s u\ge f$ weakly in $\Om$ if
  \begin{align*}
  \int_{\mb R^n}\int_{\mb R^n} \frac{|u(x)-u(y)|^{p-2}(u(x)-u(y))(\phi(x)-\phi(y)}{|x-y|^{n+sp}}dxdy\ge\langle f,\phi\rangle_{s,p,\Om}
  \end{align*}
  holds for all $\phi\in W^{s,p}_0(\Om)$ with $\phi\ge 0$ a.e. in $\Om$.
  
  \begin{Proposition}(Comparison principle).\label{comp} 
   Let $\Om\subset\mb R^n$ be a bounded domain, and  $u,v\in\widetilde{W}^{s_1,p}(\Om)\cap\widetilde{W}^{s_2,q}(\Om)$ are such that $u\le v$ in $\Om^c$. Suppose \begin{align*}
		\int_{\mb R^n}\int_{\mb R^n} &\frac{|u(x)-u(y)|^{p-2}(u(x)-u(y))(\phi(x)-\phi(y))}  {|x-y|^{n+p s_1}}~ dxdy\\&\quad +\ba\int_{\mb R^n}\int_{\mb R^n}\frac{|u(x)-u(y)|^{q-2}(u(x)-u(y))(\phi(x)-\phi(y))}  {|x-y|^{n+q s_2}}~ dxdy \\
		\le & \int_{\mb R^n}\int_{\mb R^n} \frac{|v(x)-v(y)|^{p-2}(v(x)-v(y))(\phi(x)-\phi(y))} {|x-y|^{n+p s_1}}~ dxdy\\ &\quad +\ba\int_{\mb R^n}\int_{\mb R^n} \frac{|v(x)-v(y)|^{q-2}(v(x)-v(y))(\phi(x)-\phi(y))} {|x-y|^{n+q s_2}}~ dxdy
		\end{align*}
  whenever $\phi\in W^{s_1,p}_0(\Om)\cap W^{s_2,q}_0(\Om)$ and $\phi\ge 0$ in $\Om$. Then, $u\le v$ in $\Om$.\end{Proposition}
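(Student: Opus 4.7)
The plan is the classical ``testing with $(u-v)^+$'' argument, executed simultaneously for the $p$- and $q$-operators. Set $w:=(u-v)^{+}$. Because $u\le v$ on $\Om^c$, one has $w\equiv 0$ on $\Om^c$, and since $t\mapsto t^{+}$ is $1$-Lipschitz, the Gagliardo seminorms satisfy $[w]_{s_1,p}\le [u-v]_{s_1,p}$ and $[w]_{s_2,q}\le [u-v]_{s_2,q}$. Together with $u,v\in\widetilde W^{s_1,p}(\Om)\cap\widetilde W^{s_2,q}(\Om)$ this gives $w\in W^{s_1,p}_0(\Om)\cap W^{s_2,q}_0(\Om)$, so $w$ is an admissible nonnegative test function in the hypothesis.

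The heart of the matter is a pointwise sign analysis. Write $U:=u(x)-u(y)$, $V:=v(x)-v(y)$, $\Phi:=w(x)-w(y)$, and $E:=\{u>v\}$. For every $l>1$ I claim
\[
\bigl(|U|^{l-2}U-|V|^{l-2}V\bigr)\,\Phi \;\ge\; 0 \qquad\text{a.e.\ in } \mb R^{2n},
\]
with strict inequality on $(E\times E^{c})\cup(E^{c}\times E)$. On $E\times E$ one has $\Phi=U-V$ and the claim is the standard monotonicity of $t\mapsto|t|^{l-2}t$. On $E\times E^{c}$, $\Phi=u(x)-v(x)>0$ and
\[
U-V=\bigl(u(x)-v(x)\bigr)-\bigl(u(y)-v(y)\bigr)>0,
\]
so by strict monotonicity $|U|^{l-2}U>|V|^{l-2}V$ and the product is strictly positive; the case $E^{c}\times E$ is symmetric. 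On $E^{c}\times E^{c}$ both $w(x)$ and $w(y)$ vanish, so $\Phi=0$.

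Substituting $\phi=w$ into the hypothesis and rearranging yields
\[
\bigl(A_p(u,w)-A_p(v,w)\bigr)+\ba\bigl(A_q(u,w)-A_q(v,w)\bigr)\le 0.
\]
Integrating the pointwise inequality against the positive kernels $|x-y|^{-n-ps_1}$ and $|x-y|^{-n-qs_2}$ shows that each of the two bracketed differences is nonnegative; hence both must vanish. Since the $p$-integrand is strictly positive on $E\times E^{c}$, the only way its integral can be zero is $|E|\cdot|E^{c}|=0$. Because $\Om$ is bounded, $\Om^{c}\subseteq E^{c}$ has positive (in fact infinite) measure, so $|E|=0$, i.e.\ $u\le v$ a.e.\ in $\Om$. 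The only steps requiring real care are the admissibility of $w$ as a test function (its tail is controlled by the $\widetilde W$-structure of $u$ and $v$) and promoting the strict positivity in the cross case to a genuine measure statement via Fubini; the presence of two operators adds no further obstacle since the $p$- and $q$-contributions have the same sign and are simply summed.
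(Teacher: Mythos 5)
Your proof is correct and is essentially the argument the paper intends: the paper simply defers to \cite[Proposition 2.10]{iann}, whose proof is exactly this test with $(u-v)^{+}$ combined with the monotonicity of $t\mapsto|t|^{l-2}t$, and summing the $p$- and $q$-contributions (both of the same sign) is the only adaptation needed. One cosmetic caveat: the bound $[w]_{s_1,p}\le[u-v]_{s_1,p}$ is not by itself meaningful since the right-hand side may be infinite for functions only in $\widetilde W^{s_1,p}(\Om)$; the admissibility of $w$ should rest, as you indicate at the end, on $w$ being supported in $\oline\Om$ and locally $W^{s_1,p}$ near $\oline\Om$, with the far-off-diagonal part of its seminorm controlled by $\|w\|_{L^p(\Om)}$.
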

  \begin{proof}
	The proof follows in the same line of \cite[Proposition 2.10]{iann}.\QED
   \end{proof}

\begin{Lemma}\label{harnc}(Weak Harnack Inequality). Let $2\le q\le p<\infty$, then
	there exist $\sg\in(0,1)$ and $\bar{C}_1>0$ such that if $u\in\widetilde{W}^{s_1,p}(B_{R/3})\cap\widetilde{W}^{s_2,q}(B_{R/3})$ satisfies weakly
	\begin{align*}
	(-\Delta)_p^{s_1} u +\ba(-\Delta)_q^{s_2} u &\ge -K \quad \mbox{in } B_{R/3}, \\
	u\ge 0 \quad \mbox{in } \mb R^n,
	\end{align*}
	for some $K>0$, then 
	\begin{align*}
	\inf_{B_{R/4}} u \ge \sg\left(\Xint-_{B_{R}\setminus B_{R/2}} u^{q-1}\right)^{\frac{1}{q-1}} -\bar{C}_1(KR^{ps_1})^{\frac{1}{p-1}}.
	\end{align*}\end{Lemma}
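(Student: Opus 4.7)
The plan is to follow the now-standard De Giorgi--Nash--Moser scheme as adapted to the fractional setting by Di Castro--Kuusi--Palatucci, carried over to the $(p,q)$ setting. The starting point is a normalization that eliminates the right-hand side: set $d := \bar C_1 (KR^{ps_1})^{1/(p-1)}$ (with $\bar C_1$ to be fixed) and $v := u + d \geq d > 0$. Since $(-\De)^{s_1}_p$ and $(-\De)^{s_2}_q$ annihilate constants, $v$ satisfies the same differential inequality as $u$; the virtue of the shift is that the $-K$ term can be absorbed in estimates thanks to $d^{p-1} \sim KR^{ps_1}$, which has exactly the scaling of a $p$-Laplacian term tested against $v$. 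Once the weak Harnack inequality is proved for $v$ without the $K$ correction, the stated inequality for $u$ follows by subtracting~$d$.

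The two workhorse estimates are then derived by testing the weak formulation with $\phi = v^{1-\ga}\eta^q$ on cut-offs $\eta$ adapted to the scales $R/4, R/3, R/2, R$:
\begin{itemize}
\item A \emph{Caccioppoli inequality for negative powers}, obtained for $\ga\in(0,1)$, by using the algebraic inequality $|a-b|^{m-2}(a-b)(f(a)-f(b))\ge c|F(a)-F(b)|^m$ with $f(t)=t^{1-\ga}$ and $F$ the corresponding primitive. Applying this to the $q$-operator (which gives the dominant useful lower bound because $2\le q\le p$), and bounding the $p$-operator contribution from above by the $q$-one up to tail corrections, gives
\[
\int_{B_{R/3}}\!\!\int_{B_{R/3}} \frac{|v^{(1-\ga)/q}\eta(x)-v^{(1-\ga)/q}\eta(y)|^q}{|x-y|^{n+qs_2}}\,dxdy
\;\le\; C\,R^{-qs_2}\!\int_{B_{R/3}} v^{1-\ga}\eta^q\,dx + \text{(tails)}.
\]
\item A \emph{logarithmic estimate} obtained by choosing the critical exponent corresponding to $\ga=q-1$, yielding
\[
\int_{B_{R/2}}\!\!\int_{B_{R/2}} \frac{|\log v(x)-\log v(y)|^q}{|x-y|^{n+qs_2}}\,dxdy \le C\,R^{n-qs_2}.
\]
\end{itemize}
The nonlocal tails are controlled using the hypothesis $u\in\widetilde W^{s_1,p}(B_{R/3})\cap \widetilde W^{s_2,q}(B_{R/3})$ together with $u\ge 0$ in $\R^n$, which allows standard tail-decay estimates.

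From here, Moser iteration on negative powers $\ga_k\to 0^+$ combined with the fractional Sobolev embedding $X_{q,s_2}\hookrightarrow L^{q^*_{s_2}}$ upgrades the Caccioppoli estimate to
\[
\sup_{B_{R/4}} v^{-1} \;\le\; C\Big(\dashint_{B_{R/2}} v^{-\e}\Big)^{1/\e}
\]
for some small $\e>0$, i.e.\ an $L^{-\e}$-to-$L^{-\infty}$ bound. The logarithmic estimate together with a fractional John--Nirenberg inequality applied to $\log v$ then gives the complementary bound
\[
\Big(\dashint_{B_{R/2}} v^{-\e}\Big)^{1/\e}\,\Big(\dashint_{B_{R}\setminus B_{R/2}} v^{q-1}\Big)^{1/(q-1)} \le C,
\]
and combining the last two displays, recalling $v=u+d$, yields the desired inequality with a suitable $\sg\in(0,1)$. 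The principal obstacle is the Caccioppoli step: one must choose the test function $v^{1-\ga}\eta^q$ so that the $q$-contribution (of order $s_2$) dominates and the $p$-contribution (of order $s_1>s_2$ and higher growth) enters with a good sign, while both sets of nonlocal tails remain controllable; the hypothesis $q\ge 2$ is essential for the sharp inequality $|a-b|^{q-2}(a-b)(f(a)-f(b))\ge c|F(a)-F(b)|^q$ that underlies this step.
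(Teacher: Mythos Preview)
Your approach is fundamentally different from the paper's. The paper does \emph{not} run a De Giorgi--Nash--Moser iteration at all; it gives a short barrier/comparison argument in the spirit of Iannizzotto--Mosconi--Squassina. One fixes a smooth bump $\phi_R$ with $\phi_R=1$ on $B_{R/4}$ and $\phi_R=0$ outside $B_{R/3}$, sets $L:=\big(\Xint-_{B_R\setminus B_{R/2}}u^{q-1}\big)^{1/(q-1)}$, and builds the competitor $w=\sg L\,\phi_R+\chi_{B_R\setminus B_{R/2}}u$. By the decomposition Lemma~\ref{decom}, $(-\De)^{s_1}_p w+\ba(-\De)^{s_2}_q w$ in $B_{R/3}$ equals the operator applied to $\sg L\phi_R$ plus two nonlocal cross terms $h_p,h_q$ encoding the interaction with $u$ on the annulus; an elementary pointwise inequality gives $h_q\le -cL^{q-1}R^{-qs_2}$ (and similarly for $h_p$), and for $\sg$ small enough these dominate the bump contributions $C_1(\sg L)^{q-1}R^{-qs_2}$, $C_1(\sg L)^{p-1}R^{-ps_1}$. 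A case split on the size of $L$ versus $(KR^{ps_1})^{1/(p-1)}$ together with the comparison principle (Proposition~\ref{comp}) then yields $u\ge w=\sg L$ on $B_{R/4}$. The exponent $q-1$ and the annulus average thus come \emph{for free} from the nonlocal cross term $h_q$: no Caccioppoli, no log estimate, no John--Nirenberg, no iteration.

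Your Moser scheme has a concrete gap at the John--Nirenberg step. The logarithmic estimate places $\log v$ in BMO and hence yields only
\[
\Big(\dashint_{B_{R/2}} v^{-\e}\Big)^{1/\e}\Big(\dashint_{B_{R/2}} v^{\e}\Big)^{1/\e}\le C
\]
for \emph{small} $\e>0$; it does not control $\big(\dashint v^{q-1}\big)^{1/(q-1)}$ with $q-1\ge 1$. To reach exponent $q-1$ you would need an additional reverse-H\"older iteration on positive exponents (Caccioppoli with test function $v^{1+\beta}\eta^q$ for $\beta\in(1-q,0)$, iterated up from $\e$ toward $q-1$), which you have omitted, and you would still have to transfer the average from a ball to the annulus $B_R\setminus B_{R/2}$. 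The barrier proof sidesteps all of this in a few lines, and what it buys is exactly the exponent $q-1$ on the annulus---used crucially in the oscillation decay (Theorem~\ref{oscl}) via the Jensen step $\Xint- f\le\big(\Xint- f^{q-1}\big)^{1/(q-1)}$.
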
	
\begin{proof}
	Let $\phi\in C_c^{\infty}(\mb R^n)$ be such that $0\le\phi\le 1$ in $\mb R^n$, $\phi =1$ in $B_{3/4}$, $\phi=0$ in $B_1^c$. Using \cite[Proposition 2.12]{iann}, we have
	$|(-\Delta)_p^{s_1} \phi|\leq C_1$ and $|(-\Delta)_q^{s_2} \phi|\leq C_1$ weakly in $B_1$, for some $C_1>0$.
	Define $\phi_R(x)=\phi(3x/R)$, then $\phi_R\in C_c^{\infty}(\mb R^n)$, $0\le\phi_R\le 1$ in $\mb R^n$, $\phi_R =1$ in $B_{R/4}$ and $\phi=0$ in $B_{R/3}^c$. Moreover, $|(-\Delta)_p^{s_1} \phi_R|\leq C_1 R^{-ps_1}$ and $|(-\Delta)_q^{s_2} \phi_R|\leq C_1 R^{-qs_2}$ weakly in $B_{R/3}$.
	For $\sg\in(0,1)$, set 
	\begin{align*}
	L= \left(\Xint-_{B_R\setminus B_{R/2}} u^{q-1}\right)^{\frac{1}{q-1}} \mbox{and} \quad w=\sg L\phi_R +\chi_{B_R\setminus B_{R/2}}u.	
	\end{align*}
	Using Lemma \ref{decom}, we have $w\in\widetilde{W}^{s_1,p}(B_{R/3})\cap\widetilde{W}^{s_2,q}(B_{R/3})$ with
	\begin{align*}
	(-\Delta)_p^{s_1}w +\ba (-\Delta)_q^{s_2}w=  \big((-\Delta)_p^{s_1}+\ba(-\Delta)_q^{s_2}\big) (\sg L\phi_R) +h_p+h_q \quad \text{weakly in } B_{R/3},
	\end{align*}
 where \begin{align*}
	h_p(x)= 2\ds\int_{B_R\setminus B_{R/2}} \frac{|\sg L\phi_R(x)-u(y)|^{p-2}(\sg L\phi_R(x)-u(y))-(\sg L\phi_R(x))^{p-1}}{|x-y|^{n+ps_1}}~dy \ \ \mbox{and} \\
	h_q(x)= 2\ds\int_{B_R\setminus B_{R/2}} \frac{|\sg L\phi_R(x)-u(y)|^{q-2}(\sg L\phi_R(x)-u(y))-(\sg L\phi_R(x))^{q-1}}{|x-y|^{n+qs_2}}~dy.\end{align*}
	Now using the inequality: for $a,b\ge 0$ and $\ga\ge 1$, 
	\begin{equation*} 
	a^\ga -|a-b|^{\ga}(a-b)\ge 2^{1-\ga}\;b^{\ga},
	\end{equation*}
	it follows that
	\begin{align*}
	&h_p(x)\leq -2^{3-p}\int_{B_R\setminus B_{R/2}}\frac{(u(y))^{p-1}}{|x-y|^{n+ps_1}}dy \ \text{ and } \ h_q(x) \leq -2^{3-q}\int_{B_R\setminus B_{R/2}} \frac{(u(y))^{q-1}}{|x-y|^{n+qs_2}}dy. 
	\end{align*}
	Hence, for a.e. $x\in B_{R/3}$
	\begin{align*}
	  \big((-\Delta)_p^{s_1}+\ba (-\Delta)_q^{s_2}\big)w(x) &\leq \frac{C_1(\sg L)^{p-1}}{R^{ps_1}}+\frac{C_1(\sg L)^{q-1}}{R^{qs_2}} -2^{3-p}\int_{B_R\setminus B_{R/2}}\frac{(u(y))^{p-1}}{|x-y|^{n+ps_1}}~dy \\ &\qquad\qquad -2^{3-q}\int_{B_R\setminus B_{R/2}} \frac{(u(y))^{q-1}}{|x-y|^{n+qs_2}}~dy\\
	  &\leq\frac{C_1(\sg L)^{p-1}}{R^{ps_1}}+\frac{C_1(\sg L)^{q-1}}{R^{qs_2}}-\frac{C_2\; L^{p-1}}{R^{ps_1}}-\frac{C_3\; L^{q-1}}{R^{qs_2}}\\&
	  \leq -\frac{C_2\; L^{p-1}}{2R^{ps_1}}-\frac{C_3\; L^{q-1}}{2R^{qs_2}},
	\end{align*}
	where the last inequality follows if we assume 
	\begin{align*}
	\sg < \min\Big\{1,\left(\frac{C_2}{2C_1}\right)^\frac{1}{p-1},  \left(\frac{C_3}{2C_1}\right)^\frac{1}{q-1}\Big\}.
	\end{align*}
	Therefore, we have
	\begin{align*}
	\mc L w(x)\leq -\frac{C_2\; L^{p-1}}{2R^{ps_1}} \quad\text{ weakly in } B_{R/3}.
	\end{align*}
	For convenience denote $\bar{C}_1 =\Big(\frac{1}{C_2}\Big)^\frac{1}{p-1}$, then we consider following cases:\\
	\textbf{Case 1:} If $L\le\bar{C}_1(KR^{ps_1})^\frac{1}{p-1}$.\\
	By using the fact that $0<\sg<1$, we have $\sg L\le \bar{C}_1(KR^{ps_1})^\frac{1}{p-1}$. Hence, $$ \inf_{B_{R/4}}u\ge 0\ge \sg L- \bar{C}_1(KR^{ps_1})^\frac{1}{p-1}.$$
	\textbf{Case 2:} If $L>\bar{C}_1(KR^{ps_1})^\frac{1}{p-1}$. \\
	In this case $L^{p-1}\ge\frac{KR^{ps_1}}{C_2}$, which gives us 
	\begin{align*}
	&\mc L w\le -K\le\mc L u\qquad\text{weakly in } B_{R/3}\\
	&w=\chi_{B_R\setminus B_{R/2}} u\le u \quad \text{in } B^c_{R/3}.
	\end{align*}
	Then by proposition \ref{comp}, we deduce that $w\le u$ in $\mb R^n$, in particular 
	$$ \inf_{B_{R/4}} u\ge\inf_{B_{R/4}} w =\sg L.$$
	Thus, 
	\begin{align*}
	\inf_{B_{R/4}} u \ge \sg\left(\Xint-_{B_{R}\setminus B_{R/2}} u^{q-1}\right)^{\frac{1}{q-1}} -\bar{C}_1(KR^{ps_1})^{\frac{1}{p-1}}.
	\end{align*}\QED
\end{proof}
\begin{Definition}
	For $u:\mb R^n\to \mb R$ measurable, $1<p,q<\infty$ and $s_1,s_2\in(0,1)$, the nonlocal tail centered at $x\in\mb R^n$ with radius $R>0$ is defined as 
	\begin{align*}
	T_p(u;x;R)=\left(R^{ps_1}\int_{B_R^c} \frac{|u(y)|^{p-1}}{|x-y|^{n+ps_1}} dy\right)^\frac{1}{p-1},
	\end{align*}
	and analogously $T_q(u;x;R)$ is defined. Set $T_p(u;R)=T_p(u;0;R)$ and $T_q(u;R)=T_q(u;0;R)$.
\end{Definition}

\begin{Lemma}\label{imharnc}
	Let $2\le q\le p$, then there exist $\sg>0$, $\bar{C}>0$, and for all $\e>0$, a constant $C_\e>0$ such that if  $u\in\widetilde{W}^{s_1,p}(B_{R/3})\cap\widetilde{W}^{s_2,q}(B_{R/3})$ is bounded in $\mb R^n$ and satisfies weakly
	\begin{align*}
	(-\Delta)_p^{s_1} u +\ba(-\Delta)_q^{s_2} u &\ge -K \quad \mbox{in } B_{R/3}, \\
	u\ge 0 \quad \mbox{in } B_R,
	\end{align*}
	for some $K>0$, then there exists $M>0$ such that
	{\small\begin{align*}
	\inf_{B_{R/4}} u \ge \sg\left(\Xint-_{B_{R}\setminus B_{R/2}} u^{q-1}\right)^{\frac{1}{q-1}} -\bar{C}(KR^{ps_1})^{\frac{1}{p-1}}- \e\ds\sup_{B_R} u-C_\e T_p(u_-;R)-R^\frac{ps_1-qs_2}{p-1}M(\e+C_\e).
	\end{align*}\small}
\end{Lemma}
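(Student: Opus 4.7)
The strategy is to reduce the claim to Lemma~\ref{harnc} by working with the positive part $u_+:=\max\{u,0\}$, which is nonnegative on all of $\mb R^n$. Since $u\ge 0$ in $B_R$, the negative part $u_-:=\max\{-u,0\}$ has support contained in $B_R^c$ (in particular disjoint from $B_{R/3}$), and $u_+=u$ on $B_R\supset B_{R/3}$. I would apply Lemma~\ref{decom} separately to each of the two nonlocal operators with the perturbation $v=u_-$ (so that $u+u_-=u_+$), obtaining
\[
(-\De)_p^{s_1}u_+=(-\De)_p^{s_1}u+h_p,\qquad (-\De)_q^{s_2}u_+=(-\De)_q^{s_2}u+h_q,
\]
weakly in $B_{R/3}$, with $h_p$, $h_q$ given by the explicit correction formulas of that lemma.

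Next, I would estimate $h_p$ and $h_q$ pointwise in $B_{R/3}$. Using the monotonicity of $t\mapsto|t|^{p-2}t$ together with $u_-(y)\ge 0$ and the fact that $|x-y|\gtrsim|y|$ for $x\in B_{R/3}$ and $y\in B_R^c$, I expect the bounds
\[
h_p(x)\le CR^{-ps_1}\,T_p(u_-;R)^{p-1},\qquad h_q(x)\le CR^{-qs_2}\,T_q(u_-;R)^{q-1}.
\]
Hence $u_+\ge 0$ on $\mb R^n$ and satisfies weakly in $B_{R/3}$
\[
(-\De)_p^{s_1}u_++\ba(-\De)_q^{s_2}u_+\ge-\tilde K,\qquad \tilde K:=K+CR^{-ps_1}T_p(u_-;R)^{p-1}+CR^{-qs_2}T_q(u_-;R)^{q-1}.
\]
Applying Lemma~\ref{harnc} to $u_+$ and using $u=u_+$ throughout $B_R$ gives
\[
\inf_{B_{R/4}}u\ge\sg\Big(\Xint-_{B_R\setminus B_{R/2}}u^{q-1}\Big)^{1/(q-1)}-\bar C_1(\tilde K R^{ps_1})^{1/(p-1)}.
\]

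To match the asserted form, I would split $(\tilde K R^{ps_1})^{1/(p-1)}$ subadditively (using $1/(p-1)\le 1$, which holds since $p\ge 2$), producing three contributions: $C(KR^{ps_1})^{1/(p-1)}$, $CT_p(u_-;R)$, and $CR^{(ps_1-qs_2)/(p-1)}T_q(u_-;R)^{(q-1)/(p-1)}$. Using the $L^\infty$-bound on $u$, set $M:=\|u\|_{L^\infty(\mb R^n)}$ so that $T_q(u_-;R)^{(q-1)/(p-1)}\le CM^{(q-1)/(p-1)}$. Since $(q-1)/(p-1)\le 1$, Young's inequality yields $M^{(q-1)/(p-1)}\le\e M+C_\e$ for every $\e>0$, producing the term $R^{(ps_1-qs_2)/(p-1)}M(\e+C_\e)$; a parallel Young-type interpolation on $T_p(u_-;R)$ against $\sup_{B_R}u$ then generates the remaining split $\e\sup_{B_R}u+C_\e T_p(u_-;R)$. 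Collecting all pieces yields the asserted inequality.

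The main obstacle is handling the two distinct tail scalings $R^{ps_1}$ and $R^{qs_2}$ coming from the operators with incompatible exponents $p-1$ and $q-1$. The technical heart is the conversion of the $T_q$-tail into a form compatible with $T_p$ and $\sup_{B_R}u$, which relies crucially on the $L^\infty$-boundedness of $u$; this is precisely what produces the new terms $\e\sup_{B_R}u$ and $R^{(ps_1-qs_2)/(p-1)}M(\e+C_\e)$ that are absent from Lemma~\ref{harnc}.
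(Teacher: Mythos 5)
Your overall architecture is the same as the paper's: decompose $(-\De)_p^{s_1}u_+=(-\De)_p^{s_1}u+h_p$ and $(-\De)_q^{s_2}u_+=(-\De)_q^{s_2}u+h_q$ via Lemma \ref{decom} with $v=u_-$ (supported in $B_R^c$), fold the corrections into a new constant $\hat K$, apply Lemma \ref{harnc} to $u_+$, and split $(\hat K R^{ps_1})^{1/(p-1)}$ subadditively. However, there is a genuine gap at the key step: your claimed pointwise bounds $|h_p(x)|\le CR^{-ps_1}T_p(u_-;R)^{p-1}$ and $|h_q(x)|\le CR^{-qs_2}T_q(u_-;R)^{q-1}$ are false when $p>2$ (resp. $q>2$). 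For $x\in B_{R/3}$ and $y\in\{u<0\}\subset B_R^c$ one has $u(x)-u(y)-u_-(y)=u(x)$, so
\begin{align*}
-h_p(x)=2\int_{\{u<0\}}\frac{\big(u(x)+u_-(y)\big)^{p-1}-u(x)^{p-1}}{|x-y|^{n+ps_1}}\,dy,
\end{align*}
and $(a+b)^{p-1}-a^{p-1}$ is \emph{not} controlled by $Cb^{p-1}$ alone for $p>2$: it contains cross terms like $a^{p-2}b$, which blow up relative to $b^{p-1}$ when $u(x)$ is large and $u_-(y)$ is small. The correct estimate, as in the paper, uses the inequality $(a+b)^m-a^m\le\theta a^m+C_\theta b^m$ with $a=u(x)\le\sup_{B_R}u$ and $b=u_-(y)$, which produces the extra terms $\frac{C\theta}{R^{ps_1}}\big(\sup_{B_R}u\big)^{p-1}$ and $\frac{C\theta}{R^{qs_2}}\big(\sup_{B_R}u\big)^{q-1}$ in $\hat K$; after taking the $1/(p-1)$ root and choosing $\theta$ with $C\theta<(\e/\bar C)^{p-1}$, these are precisely the origin of the $\e\sup_{B_R}u$ term (and of the factor $\e$ in $M(\e+C_\e)$) in the statement.

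Your later attempt to recover $\e\sup_{B_R}u$ by a ``Young-type interpolation on $T_p(u_-;R)$ against $\sup_{B_R}u$'' cannot repair this: $T_p(u_-;R)$ only involves values of $u$ outside $B_R$ and bears no quantitative relation to $\sup_{B_R}u$, so no such trade is available, and if your tail-only bound on $h_p$ were true the term $\e\sup_{B_R}u$ would not be needed at all --- its presence in the statement is the signal that the correction genuinely depends on the size of $u$ inside $B_R$. Your treatment of the $T_q$-contribution (bounding $\big(T_q(u_-;R)\big)^{(q-1)/(p-1)}$ and $\big(\sup_{B_R}u\big)^{(q-1)/(p-1)}$ by a constant $M$ using the global boundedness of $u$, with the mismatch of homogeneities absorbed into $R^{(ps_1-qs_2)/(p-1)}M(\e+C_\e)$) is essentially the paper's, up to renaming constants, and is fine once the corrected estimate on $h_p,h_q$ is in place.
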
	
\begin{proof}
	We apply Lemma \ref{decom} to $u$ and $v=u_-$ for $\Om=B_{R/3}$, then we have in weak sense in $B_{R/3}$
	\begin{align*}
	(-\De)_p^{s_1}u_+(x)= (-\De)_p^{s_1}u(x)+2\;g_p(x),  \end{align*}
	where {\small\begin{align*}
	g_p(x)=&\ds \int_{B_{R/3}^c}  \frac{|u(x)-u(y)-u_-(y)|^{p-2}(u(x)-u(y)-u_-(y))-|u(x)-u(y)|^{p-2}(u(x)-u(y))}{|x-y|^{n+ps_1}}dy\\
		=& \int_{\{u<0\}}  \frac{|u(x)-u(y)-u_-(y)|^{p-2}(u(x)-u(y)-u_-(y))-|u(x)-u(y)|^{p-2}(u(x)-u(y))}{|x-y|^{n+ps_1}}dy.
		\end{align*}\small}
	Since $u\ge 0$ in $B_R$ and $u+u_-=u_+$, function $g_p$ reduces to 
	\begin{align*}
	g_p(x)
	=& \int_{\{u<0\}} \frac{(u(x))^{p-1}-|u(x)-u(y)|^{p-2}(u(x)-u(y))}{|x-y|^{n+ps_1}}dy
	\end{align*}
	a.e. in $B_{R/3}$. Using the fact that $\{u<0\}\subset B_R^c$, we have $|x-y|\ge \frac{2}{3}|y|$ for all $x\in B_{R/3}$ and $y\in\{ u<0\}$. Thus, a.e. $x\in B_{R/3}$ 
	\begin{align*}
	g_p(x)\ge -C\int_{\{u<0\}} \frac{|u(x)-u(y)|^{p-2}(u(x)-u(y))-u^{p-1}(x)}{|y|^{n+ps_1}}dy.
	\end{align*}
	Now using the inequality: for all $\theta >0$, there exists a constant $C_\theta>0$ such that for all $a,b\ge 0$ and $m\ge 1$, 
	\[(a+b)^m -a^m \le \theta \;a^m+ C_\theta \;b^m, \]
	we deduce that 
	\begin{align*}
	g_p(x)&\ge -C\theta u^{p-1}(x)\int_{B_R^c} \frac{1}{|y|^{n+ps_1}}dy -C_\theta \int_{B_R^c} \frac{|u(y)|^{p-1}}{|y|^{n+ps_1}}dy\\
&	\ge \frac{-C\theta}{R^{ps_1}} \Big(\sup_{B_R} u\Big)^{p-1} -\frac{C_\theta}{R^{ps_1}}(T_p(u_-;R))^{p-1}
	\end{align*} 
	a.e. in $ B_{R/3}$. Set 
	{\small\begin{align*}
	g_q(x)=\ds \int_{B_{R/3}^c}  \frac{|u(x)-u(y)-u_-(y)|^{q-2}(u(x)-u(y)-u_-(y))-|u(x)-u(y)|^{q-2}(u(x)-u(y))}{|x-y|^{n+qs_2}}dy,
	\end{align*}\small}
	then analogous argument shows that 
	\[(-\De)_q^{s_2}u_+(x)= (-\De)_q^{s_2}u(x)+2\;g_q(x),\quad \mbox{ weakly in }B_{R/3} \]
	and 
	\begin{align*}
	2\;g_q(x)  &\ge \frac{-C\theta}{R^{qs_2}} \Big(\sup_{B_R} u\Big)^{q-1} -\frac{C_\theta}{R^{qs_2}}(T_q(u_-;R))^{q-1}
	\end{align*}
	a.e. in $B_{R/3}$. Thus, collecting all the results, we have 
	\begin{align*}
	\big((-\De)_p^{s_1}+\ba (-\De)_q^{s_2}\big)\; u_+(x)&\ge (-\De)_p^{s_1}u(x)+\ba (-\De)_q^{s_2}u(x)
	-\frac{C\theta}{R^{ps_1}} \Big(\sup_{B_R} u\Big)^{p-1} \\ & \;-\frac{C_\theta}{R^{ps_1}}(T_p(u_-;R))^{p-1} 
	-\frac{C\theta}{R^{qs_2}} \Big(\sup_{B_R} u\Big)^{q-1} -\frac{C_\theta}{R^{qs_2}}(T_q(u_-;R))^{q-1} \\
	&\ge -K -\frac{C\theta}{R^{ps_1}} \Big(\sup_{B_R} u\Big)^{p-1}  -\frac{C_\theta}{R^{ps_1}}(T_p(u_-;R))^{p-1} \\ &\quad
	-\frac{C\theta}{R^{qs_2}} \Big(\sup_{B_R} u\Big)^{q-1}  -\frac{C_\theta}{R^{qs_2}}(T_q(u_-;R))^{q-1}:= -\hat{K}. \end{align*}
	Since $u$ is bounded in $\mb R^n$, we have $\ds\sup_{B_R} u\le \|u\|_{L^\infty(\mb R^n)}$ and 
	\begin{align*}
	T_q(u_-;R)^{q-1}=R^{qs_2}\int_{B_R^c}\frac{|u(y)|^{q-1}}{|y|^{n+qs_2}}dy\le C \frac{\|u\|_{L^\infty(\mb R^n)}^{q-1}}{qs_2},
	\end{align*}
	where $C>0$ is a constant independent of $R$. Hence, there exists a constant $M>0$ independent of $R$ such that \[\Big(\ds\sup_{B_R} u\Big)^\frac{q-1}{p-1}\le M \ \ \mbox{and} \ \ \big(T_q(u_-;R)\big)^\frac{q-1}{p-1}\le M.\] 
	Using the inequality $a^m+b^m\le 2^{1-m}(a+b)^m$ for $a,b\ge 0$, and $0<m\le 1$, we deduce that 
	\begin{align*}
	\bar{C_1}(\hat{K}R^{ps_1})^\frac{1}{p-1}\le& \bar{C}(KR^{ps_1})^\frac{1}{p-1}+\bar{C}(C\theta)^\frac{1}{p-1}\Big(\sup_{B_R} u\Big)+ \bar{C}C_\theta^\frac{1}{p-1}T_p(u_-;R)\\
	&+\bar{C}R^\frac{ps_1-qs_2}{p-1}M\Big((C\theta)^\frac{1}{p-1}+C_\theta^\frac{1}{p-1} \Big),
	\end{align*}
	where $\bar{C}>0$ is a constant. For all $\e>0$, choose $\theta>0$ such that $C\theta <\Big(\frac{\e}{\bar{C}}\Big)^{p-1}$, then there exists a constant $C_\e>0$ such that
	\begin{align*}
	\bar{C_1}(\hat{K}R^{ps_1})^\frac{1}{p-1}\le \bar{C}(KR^{ps_1})^\frac{1}{p-1}+\e\sup_{B_R} u+C_\e \;T_p(u_-;R)
	+R^\frac{ps_1-qs_2}{p-1}M \big(\e+C_\e \big).
	\end{align*}
	Applying Lemma \ref{harnc} to $u_+$, for any $\e>0$, we have 
	\begin{align*}
	\inf_{B_{R/4}} u=\inf_{B_{R/4}} u_+ 
	\ge &\sg\left(\Xint-_{B_{R}\setminus B_{R/2}} u^{q-1}\right)^{\frac{1}{q-1}}-\bar{C}(KR^{ps_1})^\frac{1}{p-1}-\e\sup_{B_R} u-C_\e \;T_p(u_-;R)\\
	&\quad-R^\frac{ps_1-qs_2}{p-1} M\big(\e+C_\e \big).
	\end{align*} \QED
\end{proof} 
\begin{Theorem}\label{oscl} Let $2\le q\le p$.
	There exist $\al\in(0,1)$ and $C>0$ such that if $u\in\widetilde{W}^{s_1,p}(B_{R_0})\cap\widetilde{W}^{s_2,q}(B_{R_0})$ is bounded in $\mb R^n$ and satisfies $|(-\Delta)_p^{s_1}u+\ba(-\Delta)_q^{s_2}|\le K$ weakly in $B_{R_0}$ for some $K$ and $R_0>0$, then for all $r\in(0, R_0)$ 
	\[ \underset{B_r}{\rm osc}\; u \le C\Big((KR_0^{ps_1})^\frac{1}{p-1}+Q(u;R_0)+R_0^\frac{ps_1-qs_2}{p-1} \Big)\frac{r^\al}{R_0^\al}.\]
\end{Theorem}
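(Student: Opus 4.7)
The strategy is the nonlocal De Giorgi--Nash--Moser argument: iterate the improved weak Harnack inequality of Lemma \ref{imharnc} on a dyadic sequence of balls to obtain a geometric decay of the oscillation, and then convert this into a Hölder estimate on all $r \in (0, R_0)$.

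\textbf{Dyadic setup and choice of nonnegative test function.} Set $R_k = 4^{-k} R_0$ and
\[ m_k = \inf_{B_{R_k}} u,\qquad M_k = \sup_{B_{R_k}} u,\qquad \omega_k = M_k - m_k.\]
At each step at least one of the two functions $u - m_k$ or $M_k - u$ has average not less than $\omega_k/2$ on the annulus $B_{R_k}\setminus B_{R_k/2}$; denote that one by $v_k$, so $v_k \ge 0$ on $B_{R_k}$. The bound $|(-\Delta)^{s_1}_p u + \ba(-\Delta)^{s_2}_q u| \le K$ gives $(-\Delta)^{s_1}_p v_k + \ba(-\Delta)^{s_2}_q v_k \ge -K$ weakly in $B_{R_k}$, so Lemma \ref{imharnc} applies with $R = R_k$.

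\textbf{One-step oscillation reduction.} Since $q \ge 2$, Jensen's inequality for the convex map $t \mapsto t^{q-1}$ yields
\[ \Big(\dashint_{B_{R_k}\setminus B_{R_k/2}} v_k^{q-1}\Big)^{1/(q-1)} \ge \dashint_{B_{R_k}\setminus B_{R_k/2}} v_k \ge \frac{\omega_k}{2}.\]
Inserting this into Lemma \ref{imharnc} and translating the resulting bound for $\inf_{B_{R_{k+1}}} v_k$ back to $u$ gives
\[ \omega_{k+1} \le (1-c)\,\omega_k + C\bigl(KR_k^{ps_1}\bigr)^{1/(p-1)} + C\,R_k^{(ps_1 - qs_2)/(p-1)} + \varepsilon\sup_{B_{R_k}} u + C_\varepsilon\, T_p((v_k)_-; R_k),\]
for a fixed $c \in (0,1)$ independent of $k$ (essentially $\sigma/2$ from Lemma \ref{imharnc}). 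The parameter $\varepsilon$ is chosen small so that the $\varepsilon\sup$ term can be absorbed into $\omega_k$ after a preliminary translation making $\sup_{B_{R_0}}|u|$ comparable to $\omega_0$.

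\textbf{Iteration and Hölder exponent.} Both scaling exponents $ps_1/(p-1)$ and $(ps_1 - qs_2)/(p-1)$ are positive (using $s_1 > s_2$ and $p \ge q$), so the deterministic part of the error is bounded by $C\rho^k$ for some $\rho \in (0,1)$; the tail $T_p((v_k)_-; R_k)$ is controlled by the quantity $Q(u; R_0)$, which is precisely designed to dominate such tails. A standard iteration lemma applied to $\omega_{k+1} \le (1-c)\omega_k + C\rho^k$ produces $\omega_k \le C\mu^k$ with $\mu = \max\{1-c, \rho\} < 1$. Setting $\alpha = \log(1/\mu)/\log 4 \in (0,1)$ and interpolating between the dyadic radii $R_k$ and $R_{k+1}$ for arbitrary $r \in (0, R_0)$ yields the claimed estimate with the stated dependence on $K$, $Q(u;R_0)$ and $R_0^{(ps_1 - qs_2)/(p-1)}$.

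\textbf{Main obstacle.} The chief technical difficulty is that $v_k$ is nonnegative only inside $B_{R_k}$, so its negative-part tail $T_p((v_k)_-; R_k)$ picks up mass from the region where $u$ may lie on either side of $m_k$ (resp.\ $M_k$). Bounding this tail uniformly in $k$ by $Q(u;R_0)$, and coordinating the choice of $\varepsilon$ so as to absorb the $\varepsilon\sup$ contribution without eroding the geometric factor $(1-c)$, is what pins down the right quantitative relations between the translation, $\varepsilon$, and the dyadic parameter.
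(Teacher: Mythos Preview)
Your outline follows the right template, but the treatment of the nonlocal tail has a genuine gap that makes the iteration collapse. You assert that $T_p((v_k)_-;R_k)$ ``is controlled by the quantity $Q(u;R_0)$'' and then feed this into the recursion $\omega_{k+1}\le(1-c)\omega_k+C\rho^k$. These two statements are incompatible. The crude estimate $T_p((v_k)_-;R_k)\le C\|u\|_{L^\infty(\mb R^n)}\le CQ(u;R_0)$ is certainly valid, but it is a \emph{constant} in $k$; inserting a constant error into $\omega_{k+1}\le(1-c)\omega_k+C_\e Q(u;R_0)+\dots$ only gives $\limsup_k\omega_k\le C_\e Q(u;R_0)/c$, not decay to zero, so no H\"older modulus follows. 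The point is that $(v_k)_-$ lives on $B_{R_k}^c$, and on each previous annulus $B_{R_i}\setminus B_{R_{i+1}}$ one has $(u-m_k)_-\le m_k-m_i\le\omega_i$; hence the tail is coupled to \emph{all} past oscillations, contributing roughly $\big(\sum_{i<k}(R_k/R_i)^{ps_1}\omega_i^{p-1}\big)^{1/(p-1)}$ plus an exterior piece of order $(R_k/R_0)^{s_1}$ times tail data of $u$. This is not the simple two-term recursion you invoke; it has full memory.

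The paper handles this with a different bookkeeping device. Rather than setting $m_k=\inf_{B_{R_k}}u$ and $M_k=\sup_{B_{R_k}}u$, it \emph{constructs} monotone sequences $m_j\le M_j$ inductively with the prescribed gap $M_j-m_j=\mu R_j^\al$ (for a fixed $\mu$ comparable to $R_0^{-\al}$ times the right-hand side of the claimed estimate) and $m_j\le u\le M_j$ on $B_j$. Under this ansatz the annular tail telescopes to $\mu R_j^\al\,S(\al)^{1/(p-1)}$ with $S(\al)=\sum_{k\ge 1}(4^{\al k}-1)^{p-1}4^{-ps_1 k}\to 0$ as $\al\to 0^+$; choosing $\al$ small enough that $4^\al\big((1-\sg/2)+CS(\al)^{1/(p-1)}\big)\le 1-\sg/4$ absorbs the tail and closes the induction. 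A second, smaller difference: the paper does not use the ``alternative'' but applies Lemma \ref{imharnc} to \emph{both} $M_j-u$ and $u-m_j$ and adds, which directly produces the factor $(1-\sg/2)(M_j-m_j)$ with $\e=\sg/4$ and no preliminary translation needed (the $\e\sup$ term in the lemma, applied to $v_k$, is already $\e\,\omega_k$). If you wish to keep your $m_k=\inf_{B_{R_k}}u$ framework, you must state and prove an iteration lemma that handles the full memory term; the claim that $Q(u;R_0)$ alone dominates the tail does not suffice.
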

\begin{proof}
	For all $j\in\mb N\cup\{0\}$, define sequences $R_j=\frac{R_0}{4^j}$, $B_j=B_{R_j}$, $\frac{1}{2}B_j=B_{R_j/2}$ and $A_j= B_j\setminus\frac{1}{2}B_j$.
	We claim that there exist $\al>0$, $\mu>0$, a non-decreasing sequence $\{m_j\}$ and a non-increasing sequence $\{M_j\}$ such that  
	\begin{align*}
	m_j\le\inf_{B_j}\; u\le\sup_{B_j}\; u\le M_j ,\qquad M_j-m_j=\mu R_j^\al.
	\end{align*}
	We will proceed by induction. For $j=0$, we set $M_0= \|u\|_{L^\infty(B_{R_0})}$ and $m_0= M_0-\mu R_0^\al$, where $\mu$ satisfies 
	\begin{equation}\label{eqmu}
	\mu\ge \frac{2\|u\|_{L^\infty(B_{R_0})}}{R_0^\al}>0.
	\end{equation}
	Hence, $m_0\le\ds\inf_{B_0}\; u\le\ds\sup_{B_0}\; u\le M_0.$ Suppose the claim holds for all $i\in\{0,\dots, j\}$ for some $j\in\mb N\cup\{0\}$. Then,
	\begin{align*}
	M_j-m_j&= \Xint-_{A_j}(M_j-u(x))dx+\Xint-_{A_j}(u(x)-m_j)dx \\
	&\le \left(\Xint-_{A_j}(M_j-u)^{q-1}\right)^\frac{1}{q-1}+ \left(\Xint-_{A_j}(u-m_j)^{q-1}\right)^\frac{1}{q-1}.
	\end{align*}
	Now employing lemma \ref{imharnc}, we obtain 
	\begin{align*}
	\sg(M_j-m_j)&\le\inf_{B_j}(M_j-u)+\inf_{B_j}(u-m_j)+2\; \bar{C}(KR_j^{ps_1})^\frac{1}{p-1}+\e\Big(\sup_{B_j}(M_j-u)+\sup_{B_j}(u-m_j)\Big) \\ &\quad+C_\e\big(T_p((M_j-u)_-;R_j)+T_p((u-m_j)_-;R_j)\big)+M(C_\e+\e)R_j^\frac{ps_1-qs_2}{p-1},
	\end{align*} 
	where $\sg\in (0,1)$ and $\bar{C},\; {C}_\e >0$ are as in lemma \ref{imharnc}. Set $\e=\sg/4$ and $C=\max\{2\bar{C}, M(C_\e+\e), C_\e\}$. Thus, noting that $\ds\sup_{B_j}(M_j-u)+\ds\sup_{B_j}(u-m_j)\le 2(M_j-m_j)$, we have
	 \begin{equation}\label{eqb60}
		\begin{aligned}
		\underset{B_{j+1}}{\rm osc}\; u\le& \left(1-\frac{\sg}{2}\right)(M_j-m_j)\\ &+ C\left((KR_j^{ps_1})^\frac{1}{p-1}+T_p((M_j-u)_-;R_j)+T_p((u-m_j)_-;R_j)+R_j^\frac{ps_1-qs_2}{p-1} \right).
		\end{aligned}
		\end{equation}
	Following the proof of \cite[Theorem 5.4]{iann}, we have estimate on the nonlocal tails
	\begin{align*}
	T_p((u-m_j)_-;R_j)\le C\left( \mu S(\al)^\frac{1}{p-1}+ \frac{Q(u;R_0)}{R_0^\al}\right)R_j^\al, 
	\end{align*}
	where $S(\al)= \ds\sum_{k=1}^\infty \frac{(4^{\al k}-1)^{p-1}}{4^{ps_1k}}\ra 0$ as $\al\ra0^+$, $Q(u;R_0)=\|u\|_{L^\infty(\mb R^n)}+T_p(u;R_0)$ and $\al <\frac{ps_1}{p-1}.$ Analogous estimate holds for $T_p((M_j-u)_-;R_j)$. Thus, for $\al<\frac{ps_1-qs_2}{p-1}$, \eqref{eqb60} implies
	\begin{equation}
	\begin{aligned}
	\underset{B_{j+1}}{\rm osc}\; u&\le \left(1-\frac{\sg}{2}\right)(M_j-m_j)+ C\left((KR_j^{ps_1})^\frac{1}{p-1}+\Big(\mu S(\al)^\frac{1}{p-1}+ \frac{Q(u;R_0)}{R_0^\al}\Big)R_j^\al+R_j^\frac{ps_1-qs_2}{p-1} \right) \\
	&\le \left(1-\frac{\sg}{2}\right)\mu R_j^\al+ C\left(K^\frac{1}{p-1}R_0^{\frac{ps_1}{p-1}-\al} +\mu S(\al)^\frac{1}{p-1}+ \frac{Q(u;R_0)}{R_0^\al}+R_0^{\frac{ps_1-qs_2}{p-1}-\al }\right)R_j^\al \\
	&= 4^\al\left(\Big(1-\frac{\sg}{2}\Big)+CS(\al)^\frac{1}{p-1} \right)\mu R_{j+1}^\al +\frac{4^\al C}{R_0^\al}\Big((KR_0^{ps_1})^\frac{1}{p-1}+ Q(u;R_0)+R_0^\frac{ps_1-qs_2}{p-1} \Big)R_{j+1}^\al \label{eqb58}
	\end{aligned}
	\end{equation}
	Choose $\al<\frac{ps_1-qs_2}{p-1}$ such that \[4^\al\left(\Big(1-\frac{\sg}{2}\Big)+CS(\al)^\frac{1}{p-1} \right)\le 1-\frac{\sg}{4}\] and set
	\begin{equation}\label{eqb59}
	\mu=\frac{4^{\al+1}\; C}{\sg\;R_0^\al}\Big((KR_0^{ps_1})^\frac{1}{p-1}+ Q(u;R_0)+R_0^\frac{ps_1-qs_2}{p-1} \Big).
	\end{equation} 
	If $\mu<\frac{2\|u\|_{L^\infty(B_{R_0})}}{R_0^\al}$, then we can replace $C$ appearing in \eqref{eqb60} by bigger constant so that \eqref{eqmu} hold true. Thus, from \eqref{eqb58}, we have
	\begin{align*}
	\underset{B_{j+1}}{\rm osc}\; u\le \mu R_{j+1}^\al.
	\end{align*}
	Therefore, we pick $m_{j+1},\; M_{j+1}$ such that 
	\begin{align*}
	m_j\le m_{j+1}\le\inf_{B_j}\; u\le\sup_{B_j}\; u\le M_{j+1}\le M_j \quad \mbox{and } M_{j+1}-m_{j+1}= \mu R_{j+1}^\al.
	\end{align*}
	Fix $r\in(0,R_0)$. Let $j\in\mb N\cup\{0\}$ be such that $R_{j+1}\le r\le R_j$, then taking into account \eqref{eqb59} and $R_j\le 4r$, we have 
	\begin{align*}
	\underset{B_r}{\rm osc}\; u\le\underset{B_j}{\rm osc} \;u\le\mu R_j^\al \le C\Big((KR_0^{ps_1})^\frac{1}{p-1}+Q(u;R_0)+R_0^\frac{ps_1-qs_2}{p-1} \Big)\frac{r^\al}{R_0^\al}.
	\end{align*}\QED
\end{proof}  

 {\bf Proof of Theorem \ref{thm1.1}} Proof of $L^\infty$ bound follows from Theorem \ref{linfty}. Next we prove interior H\"older regularity for weak solutions of $(\mc P_\la)$ when $2\le q\le p<r<  p_{s_1}^*$. Let $u$ be a nontrivial weak solution of $(\mc P_\la)$. Assume the function $a\in L^\infty(\Om)$ and let $R_0>0$ be such that $B_{R_0}:=B_{R_0}(0)\Subset\Om$. Since $u\in L^\infty(\Om)\cap X_{p,s_1}$, therefore $u\in\widetilde{W}^{s_1,p}(B_{R_0})\cap \widetilde{W}^{s_2,q}(B_{R_0})$ and is bounded in $\mb R^n$. It implies that
  \[|(-\De)_{p}^{s_1 }u+\ba(-\De)_q^{s_2}u|\le \|a\|_{L^\infty(\Om)} \|u\|_{L^\infty(\Om)}^{\de-1}+\|b\|_{L^\infty(\Om)} \|u\|_{L^\infty(\Om)}^{r-1}:=K>0.\]
  Thus, using Theorem \ref{oscl} and standard covering arguments, we can show that there exists $\al\in (0, 1)$ such that $u\in C^{0,\al}(\oline{\Om^\prime})$ for all $\Omega^\prime \Subset \Om.$ \QED
   
\section{Fibering map analysis and Nehari manifold}
 \setcounter{equation}{0}
 \noindent In this section we study the fibering maps and the Nehari manifold associated to the problem $(\mc P_\la)$.  The Nehari Manifold $\mc N_\la$ defined as
 \begin{equation*}
 \mc {N_\la}=\{{u\in X_{p,s_1}\setminus\{0\}}: \langle \mc {J_\la}^\prime(u), u\rangle =0\},
 \end{equation*}
 where $\ds\langle ._,. \rangle$ is the duality between $X_{p,s_1}$ and its dual space. Clearly, $\mc  N_{\la}$ contains every  solution of the problem $(\mc P_\la)$. From the definition of $\mc N_\la$ it is clear that $u\in \mc  N_{\la}$ if and only if
   \begin{equation*}
     \int_{Q}  \frac{|u(x)-u(y)|^{p}}{|x-y|^{n+ps_1}} dxdy +\ba \int_{Q}  \frac{|u(x)-u(y)|^{q}}{|x-y|^{n+qs_2}} dxdy - \la \int_{\Om}a(x) |u|^{\de}dx- \int_{\Om} b(x)|u|^{r}dx =0.
   \end{equation*}
 Consider the fibering map for the functional $\mc {J}_\la$ which were introduced by Drabek and Pohozaev in \cite{DP}. For $u \in X_{p,s_1}$ we define $\psi_u: \mb R^+\ra \mb R$
 as $\psi_{u}(t)=\mc {J}_\la(tu)$ that is,
   \begin{align}
    \psi_{u}(t) &= \frac{t^{p}}{p} \|u\|_{X_{p,s_1}}^{p} +\ba \frac{t^{q}}{q} \|u\|_{X_{q,s_2}}^{q}- \frac{\la
     t^{\de}}{\de}\int_{\Om} a(x)|u|^{\de} dx - \frac{t^{r}}{r}\int_{\Om} b(x) |u|^{r} dx, \nonumber \\
     \label{eqb13}
    \psi_{u}^{\prime}(t) &=t^{p-1}\|u\|_{X_{p,s_1}}^{p} +\ba t^{q-1} \|u\|_{X_{q,s_2}}^{q}- {\la
    t^{\de-1}}\int_{\Om} a(x) |u|^{\de} dx  - t^{r-1}\int_{\Om} b(x) |u|^{r} dx,\\\label{eqb14}
    \psi_{u}^{\prime\prime}(t) &= (p-1)t^{p-2}\|u\|_{X_{p,s_1}}^{p}+ \ba (q-1)t^{q-2}\|u\|_{X_{q,s_2}}^{q} - (\de-1) \la
    t^{\de-2} \int_{\Om} a(x) |u|^{\de} dx \\
    &\qquad - (r-1) t^{r-2}\int_{\Om} b(x) |u|^r dx \nonumber.
   \end{align}
  From the above equations we observe that $tu\in \mc  N_{\la}$ if and only if
  $\psi_{u}^{\prime}(t)=0$ and in particular, $u\in \mc N_{\la}$ if
  and only if $\psi_{u}^{\prime}(1)=0$. Thus it is natural to split $\mc  N_{\la}$ into three parts corresponding to local minima, local maxima and points of inflection, namely
  \begin{align*}
   \mc N_{\la}^{+}= \left\{u\in \mc N_{\la}: \psi_{u}^{\prime\prime}(1) >0\right\},\;
   \mc N_{\la}^{-}= \left\{u\in \mc N_{\la}: \psi_{u}^{\prime\prime}(1) <0\right\} \text{and}~
   \mc N_\la^0= \left\{u\in \mc N_{\la}: \psi_{u}^{\prime\prime}(1) = 0\right\}.\end{align*}
  \begin{Lemma}
  If $u$ is a minimizer of $\mc{J}_{\la}$ on $\mc{N}_{\la}$ and $u \notin \mc{N}_{\la}^{0}.$ Then $u$ is a critical point of $\mc J_\la.$
 \end{Lemma}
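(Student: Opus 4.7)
The plan is to treat $\mc{N}_\la$ as a constraint manifold and apply the Lagrange multiplier theorem. Define the $C^1$ functional $G:X_{p,s_1}\setminus\{0\}\to\mb R$ by
\[G(u)=\langle \mc{J}_\la^\prime(u),u\rangle=\|u\|_{X_{p,s_1}}^{p}+\ba\|u\|_{X_{q,s_2}}^{q}-\la\int_\Om a(x)|u|^{\de}\,dx-\int_\Om b(x)|u|^{r}\,dx,\]
so that $\mc{N}_\la=\{u\in X_{p,s_1}\setminus\{0\}:G(u)=0\}$. Comparing with \eqref{eqb13}, we see $G(u)=\psi_u^\prime(1)$.

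Next, I would compute the derivative of $G$ in the direction $u$ itself. Since $\mc{J}_\la\in C^1$ (in fact $C^2$ in each homogeneous piece), differentiation gives
\[\langle G^\prime(u),u\rangle=\langle \mc{J}_\la^{\prime\prime}(u)u,u\rangle+\langle\mc{J}_\la^\prime(u),u\rangle=\psi_u^{\prime\prime}(1)+\psi_u^\prime(1),\]
as one verifies by comparing with \eqref{eqb14}. For $u\in\mc{N}_\la$ the second term vanishes, hence $\langle G^\prime(u),u\rangle=\psi_u^{\prime\prime}(1)$.

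Now assume $u$ is a minimizer of $\mc{J}_\la$ on $\mc{N}_\la$ with $u\notin\mc{N}_\la^0$, so that $\psi_u^{\prime\prime}(1)\neq 0$. In particular $G^\prime(u)\neq 0$, so the constraint is regular at $u$, and the Lagrange multiplier rule yields $\mu\in\mb R$ with $\mc{J}_\la^\prime(u)=\mu G^\prime(u)$ in the dual $X_{p,s_1}^*$. Pairing this identity with $u$ gives
\[0=\psi_u^\prime(1)=\langle\mc{J}_\la^\prime(u),u\rangle=\mu\langle G^\prime(u),u\rangle=\mu\,\psi_u^{\prime\prime}(1).\]
Since $\psi_u^{\prime\prime}(1)\neq 0$, we obtain $\mu=0$, so $\mc{J}_\la^\prime(u)=0$, i.e. $u$ is a critical point of $\mc{J}_\la$.

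The computation is entirely routine; the only subtle point is to justify the Lagrange rule, which reduces to checking that $G^\prime(u)\neq 0$ in $X_{p,s_1}^*$. This is precisely the content of $u\notin\mc{N}_\la^0$ once we have the identity $\langle G^\prime(u),u\rangle=\psi_u^{\prime\prime}(1)$, so no separate argument is required. Thus the condition $u\notin\mc{N}_\la^0$ is used exactly once, to kill the multiplier.
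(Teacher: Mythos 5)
Your proof is correct and is essentially the standard Nehari-manifold argument (Lagrange multipliers for the constraint $G(u)=\langle\mc J_\la^\prime(u),u\rangle=0$, then killing the multiplier via $\langle G^\prime(u),u\rangle=\psi_u^{\prime\prime}(1)\neq 0$), which is exactly the argument the paper delegates to the reference \cite{DP} without reproducing it. One small remark: the parenthetical claim that $\mc J_\la$ is ``$C^2$ in each homogeneous piece'' is unnecessary and can fail when an exponent is below $2$, but your proof does not rely on it, since $G$ is $C^1$ and the identity $\langle G^\prime(u),u\rangle=\psi_u^\prime(1)+\psi_u^{\prime\prime}(1)$ follows by differentiating $t\mapsto G(tu)=t\,\psi_u^\prime(t)$ at $t=1$, comparing with \eqref{eqb13} and \eqref{eqb14}.
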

 \begin{proof}
 The details of the proof can be found in \cite{DP}.\QED
 \end{proof}

 \begin{Lemma}\label{le44}
  $\mc{J}_{\la}$ is coercive and bounded below on  $\mc {N}_{\la}$.
 \end{Lemma}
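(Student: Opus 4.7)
For $u\in\mc N_\la$ the Nehari identity reads
\begin{equation*}
\|u\|_{X_{p,s_1}}^{p}+\ba\|u\|_{X_{q,s_2}}^{q}=\la\int_\Om a(x)|u|^{\de}dx+\int_\Om b(x)|u|^{r}dx,
\end{equation*}
so the plan is to use this identity to eliminate the $r$-term from $\mc J_\la(u)$. Solving for $\int_\Om b(x)|u|^{r}dx$ and substituting into the definition of $\mc J_\la$ produces the rearranged expression
\begin{equation*}
\mc J_\la(u)=\Big(\tfrac{1}{p}-\tfrac{1}{r}\Big)\|u\|_{X_{p,s_1}}^{p}+\ba\Big(\tfrac{1}{q}-\tfrac{1}{r}\Big)\|u\|_{X_{q,s_2}}^{q}-\la\Big(\tfrac{1}{\de}-\tfrac{1}{r}\Big)\int_\Om a(x)|u|^{\de}dx,
\end{equation*}
and because the hypotheses give $1<\de\le q\le p<r$, each of the three numerical coefficients $\frac{1}{p}-\frac{1}{r}$, $\frac{1}{q}-\frac{1}{r}$, $\frac{1}{\de}-\frac{1}{r}$ is strictly positive.

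Next I would control the indefinite $a$-term. Since $a\in L^{r/(r-\de)}(\Om)$, H\"older's inequality yields
\begin{equation*}
\int_\Om a(x)|u|^{\de}dx\le \|a\|_{L^{r/(r-\de)}(\Om)}\,\|u\|_{L^{r}(\Om)}^{\de},
\end{equation*}
and the continuous embedding $X_{p,s_1}\hookrightarrow L^{r}(\Om)$ (valid because $r\le p^{*}_{s_1}$), encoded by the constant $S_r$ introduced before the lemma, then gives $\|u\|_{L^r(\Om)}\le S_r^{-1/p}\|u\|_{X_{p,s_1}}$. Dropping the nonnegative $\ba$-term, one obtains a lower bound of the shape
\begin{equation*}
\mc J_\la(u)\ge C_1\|u\|_{X_{p,s_1}}^{p}-C_2\la\|u\|_{X_{p,s_1}}^{\de},
\end{equation*}
with $C_1=\frac{1}{p}-\frac{1}{r}>0$ and $C_2=(\frac{1}{\de}-\frac{1}{r})\|a\|_{L^{r/(r-\de)}(\Om)}S_r^{-\de/p}$.

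Since $\de\le p<r$, in the generic case $\de<p$ the first term dominates as $\|u\|_{X_{p,s_1}}\to\infty$, which forces $\mc J_\la(u)\to\infty$; moreover the real-variable function $t\mapsto C_1 t^{p}-C_2\la t^{\de}$ attains a finite minimum on $[0,\infty)$, giving the lower bound of $\mc J_\la$ on $\mc N_\la$. In the borderline subcase $\de=p$ (which, under $\de\le q\le p$, forces $\de=q=p$) the same inequality yields $\mc J_\la(u)\ge(C_1-C_2\la)\|u\|_{X_{p,s_1}}^{p}$, and coercivity together with boundedness below follows provided $\la$ is small enough (which is automatically built into the smallness conditions on $\la$ used in Theorems~\ref{pqthm1}--\ref{pqthm3}). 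The only mildly delicate point is keeping track that the embedding constant $S_r$ and the norm $\|a\|_{L^{r/(r-\de)}(\Om)}$ are independent of $u$, so that $C_2$ is truly a constant; once this is observed the estimate is routine.
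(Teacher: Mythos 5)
Your proof is correct and follows essentially the same route as the paper: rewrite $\mc J_\la$ on $\mc N_\la$ using the Nehari identity, then bound the $a$-term by H\"older's inequality and the embedding constant $S_r$ to get $\mc J_\la(u)\ge\bigl(\tfrac1p-\tfrac1r\bigr)\|u\|_{X_{p,s_1}}^{p}-\la\bigl(\tfrac1\de-\tfrac1r\bigr)\|a\|_{L^{r/(r-\de)}(\Om)}S_r^{-\de/p}\|u\|_{X_{p,s_1}}^{\de}$. Your extra remark on the borderline case $\de=p$ is a harmless addition; the paper's Nehari/fibering analysis is anyway carried out under $\de<q\le p$, where the exponent gap $\de<p$ makes the conclusion immediate.
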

 \begin{proof}
  In view of H\"older's inequality and the fact that $u \in \mc {N}_{\la},$ we have
    \begin{align*}
      \mc {J}_{\la}(u) &= \left(\frac{1}{p}-\frac{1}{r}\right)\|u\|_{X_{p,s_1}}^{p} +\ba \left(\frac{1}{q}-\frac{1}{r}\right)\|u\|_{X_{q,s_2}}^{q}-\la\left(\frac{1}{\de}-\frac{1}{r}\right)\int_\Omega a(x)|u|^{\de}dx,\\
      &\geq \left(\frac{1}{p}-\frac{1}{r}\right)\|u\|_{X_{p,s_1}}^{p}-\la \left(\frac{1}{\de}-\frac{1}{r}\right)\|a\|_{L^\frac{r}{r-\de}(\Om)}S_{r}^{\frac{-\de}{p}} \|u\|_{X_{p,s_1}}^{\de}.
     \end{align*}
 Thus, $\mc {J}_\la$ is coercive and bounded below in $\mc {N}_\la$.
  \end{proof}\QED
 \noindent Define \begin{align*}
 	      \theta_{\la} := \inf\{\mc {J}_{\la}(u) \ | \ u \in \mc {N}_{\la}\} \quad \mbox{and}\ \ \theta_{\la}^\pm := \inf\{\mc {J}_{\la}(u) \ | \ u \in \mc {N}_{\la}^\pm\}.
     \end{align*}
 \begin{Lemma}\label{b5}
 There exists $\la_{0} > 0$ such that $\mc {N}_{\la}^{0} = \emptyset,$ for all $\la \in (0, \la_{0})$.
 \end{Lemma}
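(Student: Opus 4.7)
The plan is to proceed by contradiction, assuming $u\in\mathcal{N}_\lambda^0$ for arbitrarily small $\lambda$, and to extract from the two Nehari conditions an upper bound on $\|u\|_{X_{p,s_1}}$ that vanishes with $\lambda$ and a lower bound independent of $\lambda$. Writing $A := \|u\|_{X_{p,s_1}}^{p}$, $B := \beta\|u\|_{X_{q,s_2}}^{q}$, $C := \lambda\int_\Omega a(x)|u|^\delta\,dx$ and $D := \int_\Omega b(x)|u|^{r}\,dx$, the identities $\psi_u'(1)=0$ and $\psi_u''(1)=0$ read, by \eqref{eqb13} and \eqref{eqb14},
\begin{align*}
A+B &= C+D,\\
(p-1)A+(q-1)B &= (\delta-1)C+(r-1)D.
\end{align*}

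First I would eliminate $D$ by forming $(r-1)\cdot(\text{first})-(\text{second})$, which gives
\[
(r-p)A+(r-q)B = (r-\delta)C.
\]
Dropping the nonnegative term $(r-q)B$, applying H\"older's inequality with exponent $r/(r-\delta)$ to $C$, and invoking the continuous embedding via $S_r$, I obtain
\[
(r-p)\|u\|_{X_{p,s_1}}^{p} \le (r-\delta)\lambda\,\|a\|_{L^{r/(r-\delta)}(\Omega)}\,S_r^{-\delta/p}\,\|u\|_{X_{p,s_1}}^{\delta},
\]
hence $\|u\|_{X_{p,s_1}}^{p-\delta}\le C_1\lambda$ for an explicit constant $C_1 > 0$ depending only on $r,p,\delta,a,S_r$.

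Next I would eliminate $C$ by forming $(\text{second})-(\delta-1)\cdot(\text{first})$, which gives
\[
(p-\delta)A+(q-\delta)B = (r-\delta)D.
\]
Since $\delta\le q$ and $\delta<p$, the left side is at least $(p-\delta)\|u\|_{X_{p,s_1}}^{p}$, while the right side is bounded above by $(r-\delta)\|b\|_{L^\infty(\Omega)}S_r^{-r/p}\|u\|_{X_{p,s_1}}^{r}$. Since $u\not\equiv 0$ and $r>p$, dividing by $\|u\|_{X_{p,s_1}}^{p}$ yields a lower bound $\|u\|_{X_{p,s_1}}^{r-p}\ge C_2$ with $C_2 := \frac{p-\delta}{(r-\delta)\|b\|_{L^\infty(\Omega)}}S_r^{r/p}$, which is strictly positive and independent of $\lambda$.

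Combining the two estimates forces $C_2^{1/(r-p)}\le \|u\|_{X_{p,s_1}}\le (C_1\lambda)^{1/(p-\delta)}$, which fails once $\lambda< \lambda_0 := C_1^{-1}C_2^{(p-\delta)/(r-p)}$, delivering the contradiction. The main point to check carefully is that the algebraic manipulation preserves the correct signs and that the degenerate coefficients behave well: the coefficient $(q-\delta)$ may vanish when $\delta=q$, but this is harmless since $(p-\delta)>0$ still drives the lower bound; the assumption $\|b\|_{L^\infty(\Omega)}>0$ causes no loss, for if $b\equiv 0$ then $D=0$ and the second manipulated identity forces $A=0$, contradicting $u\neq 0$ directly.
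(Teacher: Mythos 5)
Your proof is correct and follows essentially the same route as the paper: you derive the same two identities \eqref{eqb15}--\eqref{eqb16}, use H\"older's inequality and the embedding constant $S_r$ to get the $\lambda$-independent lower bound and the $O(\lambda^{1/(p-\delta)})$ upper bound on $\|u\|_{X_{p,s_1}}$, and arrive at exactly the paper's threshold $\la_0$. The only difference is cosmetic: the paper packages the contradiction through the auxiliary functional $\mc E_\la$ and a case split on the sign of $\int_\Om a(x)|u|^{\de}\,dx$, whereas you argue uniformly by direct contradiction.
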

 \begin{proof}
 We divide the proof into two cases.\\
 \textbf{Case 1:}  $u \in \mc {N}_{\la}$ and $\ds\int_{\Omega}a(x) |u|^{\de}dx = 0.$\\
  Since $u\in\mc N_\la$, we have\;$\|u\|_{X_{p,s_1}}^{p} +\ba \|u\|_{X_{q,s_2}}^{q}- \ds\int_{\Om} b(x)|u|^{r}dx =0$. Hence,
 \begin{align*}
   (p-1)\|u\|_{X_{p,s_1}}^{p}+ \ba (q-1)\|u\|_{X_{q,s_2}}^{q}-&(r-1) \ds\int_{\Om} b(x) |u|^{r} dx \\
   &=(p-r)\|u\|_{X_{p,s_1}}^{p}+ \ba (q-r)\|u\|_{X_{q,s_2}}^{q}<0, 
  \end{align*}
 \noi that is, $u \notin \mc {N}_{\la}^{0}$.\\
   \textbf{Case 2:} $u \in \mc {N}_{\la}$ and $\ds\int_{\Omega}a(x) |u|^{\de}dx \neq 0.$
  \vspace*{.1 cm}\\
  Suppose $u \in \mc {N}_{\la}^{0}$, then \eqref{eqb13} and \eqref{eqb14}, implies
    \begin{eqnarray}\label{eqb15}
    (p-\de)\|u\|_{X_{p,s_1}}^{p} + \ba (q-\de)\|u\|_{X_{q,s_2}}^{q} &=& (r-\de)\int_\Omega b(x)|u|^{r} dx,\\\label{eqb16}
   (r-p)\|u\|_{X_{p,s_1}}^{p} +\ba (r-q)\|u\|_{X_{q,s_2}}^{q} &=& (r-\de) \la \int_\Omega a(x)|u|^{\de} dx.
  \end{eqnarray}
  Define $\mc E_{\la}: \mc N_{\la} \rightarrow \mb R$ as
  \begin{equation*}
    \mc E_{\la}(u) = \frac{(r-p)\|u\|_{X_{p,s_1}}^{p} +\ba (r-q)\|u\|_{X_{q,s_2}}^{q}}{(r-\de)} - \la\int_\Omega a(x)|u|^{\de} dx.
  \end{equation*}
 { Then with the help of \eqref{eqb16} we infer $\mc E_{\la}(u) = 0$ for all $u\; \in \mc {N}_{\la}^{0}$.  Moreover,}
  \begin{align*}
 \mc E_{\la}(u) & \geq \left(\frac{r-p}{r-\de}\right)\|u\|_{X_{p,s_1}}^{p} - \la \int_\Omega a(x)|u|^{\de} dx\\
  & \geq \left(\frac{r-p}{r-\de}\right)\|u\|_{X_{p,s_1}}^{p} - \la \|a\|_{L^\frac{r}{r-\de}(\Om)}S_{r}^{\frac{-\de}{p}}\|u\|_{X_{p,s_1}}^{\de},\\
  & \geq \|u\|_{X_{p,s_1}}^{\de}\left[\left(\frac{r-p}{r-\de}\right)\|u\|_{X_{p,s_1}}^{p-\de} - \la \|a\|_{L^\frac{r}{r-\de}(\Om)}S_{r}^{\frac{-\de}{p}}\right].
  \end{align*}
  With the help of \eqref{eqb15} and H\"older inequality, we have
  \begin{equation*}
  \|u\|_{X_{p,s_1}} \geq \left(\frac{(p-\de)S_{r}^{\frac{r}{p}}}{(r-\de)\|b\|_{L^\infty(\Om)}}\right)^{\frac{1}{r-p}},
  \end{equation*}
  as a result
 \begin{equation*}
    \mc E_{\la}(u) \geq \|u\|_{X_{p,s_1}}^{\de}\left(\left(\frac{r-p}{r-\de}\right)\left(\frac{(p-\de)S_{r}^{\frac{r}{p}}}{(r-\de)\|b\|_{L^\infty(\Om)}}\right)^{\frac{p-\de}{r-p}} - \la \|a\|_{L^\frac{r}{r-\de}(\Om)}S_{r}^{\frac{-\de}{p}}\right).  \end{equation*}
 It implies that there exists 
   \begin{equation*}
     \la_0:=\;\left(\frac{(r-p)S_{r}^{\frac{\de}{p}}}{(r-\de)\|a\|_{L^\frac{r}{r-\de}(\Om)}}\right)\left(\frac{(p-\de)S_{r}^{\frac{r}{p}}}{(r-\de)\|b\|_{L^\infty(\Om)}}\right)^{\frac{p-\de}{r-p}} >0
   \end{equation*}  
 \noi such that $\mc E_{\la}(u)>0$ for all $\la\in (0, \la_0)$ and $ u \in \mc {N}_{\la}^{0}$, which contradicts the fact $\mc E_\la(u)=0$ for all  $u\in\mc N_\la^0$. Therefore, $\mc {N}_{\la}^{0} = \emptyset.$
\QED \end{proof}

  For fixed $u\in X_{p,s_1}\setminus\{0\}$, define $ M_{u}: \mb R^{+} \lra \mb R$ as
   \begin{equation*}
       M_{u}(t)= t^{p-\de}\|u\|_{X_{p,s_1}}^{p} +\ba \; t^{q-\de}\|u\|_{X_{q,s_2}}^{q}-  t^{r-\de}\int_{\Om} b(x) |u|^{r}dx.
   \end{equation*}
  Then,
     \begin{align*}
       M_{u}^{\prime}(t)&= (p-\de)t^{p-\de-1}\|u\|_{X_{p,s_1}}^{p} +\ba \; (q-\de)t^{q-\de-1}\|u\|_{X_{q,s_2}}^{q} - (r-\de)t^{r-\de-1}\int_{\Om} b(x) |u|^{r} dx
     \end{align*}
 and for $t>0$, $tu\in \mc N_{\la}$ if and only if $t$ is a solution of
 $M_{u}(t)={\la} \ds\int_{\Om} a(x) |u|^{\de} dx$. Moreover, if $tu\in\mc N_\la$, then $\psi_{tu}^{\prime \prime}(1)=t^{\de+1}M_{u}^{\prime}(t)$.
  Now we study the fibering map $\psi_{u}$ according to the sign of
 $\ds\int_{\Om} a(x)|u|^{\de} dx$ and $\ds\int_{\Om} b(x)|u|^{r} dx$.

 \noi \textbf{Case 1}: If $\ds\int_{\Om} a(x)|u|^{\de} dx>0$ and $\ds\int_{\Om} b(x)|u|^{r} dx>0$.\\
 We see  $M_u(t)\ra -\infty$ as $t \ra \infty$, $M_u(t)>0$ for $t$ small enough and $M^{\prime}_u(t)<0$ for $t$ large enough.  We claim that there exists unique $t_{max}>0$ such that $M_u^\prime(t_{max})=0$. 
	\begin{align*}
		M_u^\prime(t)= &(p-\de)t^{p-\de-1}\|u\|_{X_{p,s_1}}^{p} +\ba \; (q-\de)t^{q-\de-1}\|u\|_{X_{q,s_2}}^{q} - (r-\de)t^{r-\de-1}\int_{\Om} b(x) |u|^{r} dx\\
		  =&t^{q-\de-1}\left((p-\de)t^{p-q}\|u\|_{X_{p,s_1}}^{p} +\ba \; (q-\de)\|u\|_{X_{q,s_2}}^{q} - (r-\de)t^{r-q}\int_{\Om} b(x) |u|^{r} dx\right).
	\end{align*}
 Let $G_u(t)=(p-\de)t^{p-q}\|u\|_{X_{p,s_1}}^{p} +\ba \; (q-\de)\|u\|_{X_{q,s_2}}^{q} - (r-\de)t^{r-q}\ds\int_{\Om} b(x) |u|^{r} dx$, then to prove the claim it is enough to show existence of unique $t_{max}>0$ satisfying $G_u(t_{max})=0$. Define  $H_u(t)= (r-\de)\; t^{r-q}\ds\int_{\Om} b(x) |u|^{r} dx -(p-\de)t^{p-q}\|u\|_{X_{p,s_1}}^{p}$, then $H_u(t_{max})- \ba\;(q-\de)\|u\|_{X_{q,s_2}}^q= -G_u(t)$. It is easy to see  $H_u(t)<0$ for $t$ small enough, $H_u(t)\ra\infty$ as $t\ra\infty$. Hence, there exist unique $t_*>0$ such that $H_u(t_*)=0$. Indeed $t_*=\left(\frac{(p-\de)\|u\|_{X_{p,s_1}}^p}{(r-\de)\ds\int_\Om b(x)|u|^r}\right)^\frac{1}{r-p}>0$.  Therefore, there exists unique $t_{max}>t_*>0$ such that $H_u(t_{max})= \ba\;(q-\de)\|u\|_{X_{q,s_2}}^q$.
 Moreover, $M_u$ is increasing in $(0,t_{max})$, decreasing in $(t_{max},\infty)$. As a consequence
   \begin{align*}
     (p-\de)t^{p}_{max}\|u\|_{X_{p,s_1}}^{p}&\leq (p-\de)t^{p}_{max}\|u\|_{X_{p,s_1}}^{p}+ \ba (q-\de)t^{p_2}_{max}\|u\|_{X_{q,s_2}}^{p_2} \\&=(r-\de) t^{r}_{max}\int_{\Om} b(x) |u|^{r} dx\leq (r-\de) t^{r}_{max}\|b\|_{L^\infty(\Om)}S_{r}^{\frac{-r}{p}}\|u\|_{X_{p,s_1}}^{r},
   \end{align*}
 define
     \begin{equation*}
       T_0 :=  \frac{1}{\|u\|_{X_{p,s_1}}}\left(\frac{(p-\de)S_{r}^{\frac{r}{p}}}{(r-\de)\|b\|_{L^\infty(\Om)}}\right)^{\frac{1}{r-p}}\leq t_{max},
    \end{equation*}
   then,
     \begin{equation*}
      \begin{aligned}
         M_u(t_{max})\geq  M_u(T_0)&\geq T_0^{p-\de} \|u\|^{p}_{X_{p,s_1}} -T_0^{r-\de} \|b\|_{L^\infty(\Om)} S^{\frac{-r}{p}}\|u\|^{r}_{X_{p,s_1}}\\&=\|u\|^{\de}_{X_{p,s_1}}\left(\frac{r-p}{r-\de}\right)\left(\frac{(p-\de)S_{r}^{\frac{r}{p}}}{(r-\de)\|b\|_{L^\infty(\Om)}}\right)^{\frac{p-\de}{r-p}}\geq 0.
      \end{aligned}
    \end{equation*}
  Therefore, if $\la < \la_0$, we have $M_u(t_{max})>\la\ds\int_\Om a(x)|u|^\de dx$, which ensures the existence of $t_1<t_{max}$ and $t_2>t_{max}$ such that $M_u(t_1)=M_u(t_2)=\la \ds\int_{\Om} a(x) |u|^{\de} dx $. That is,  $t_1u$ and $t_2u \in \mc {N}_{\la}.$ Also, $M_u^{\prime}(t_1)>0$ and $M_u^{\prime}(t_2)<0$ implies $t_1u \in \mc{N}^{+}_{\la}$ and $t_2u \in \mc {N}^{-}_{\la}.$  By the fact $\psi^{\prime}_{u}(t) = t^{\de}\left(M_{u}(t)- \la\ds\int_{\Om} a(x)|u|^{\de}\right)$, we deduce that $\psi^{\prime}_{u}(t)<0$ for all $t \in [0, t_1)$ and $\psi^{\prime}_{u}(t)>0$ for all $t \in (t_1, t_2)$. So, $\mc {J}_\la(t_1u) = \displaystyle\min_{0 \leq t \leq t_2}\mc {J}_\la (tu).$ Moreover, $\psi^{\prime}_u(t) > 0$ for all $t \in [t_1, t_2),\;
  \psi^{\prime}_u(t_2) = 0$ and $\psi^{\prime}_u(t) < 0$ for all $t \in (t_2, \infty)$ implies  $\mc {J}_\la(t_2u)
  = \displaystyle\max_{t \geq t_{\max}} \mc {J}_\la(tu).$\\
  \noi \textbf{ Case 2}: If $\ds\int_{\Om} a(x)|u|^{\de} dx<0$ and $\ds\int_{\Om} b(x)|u|^{r} dx>0$.\\
 Since $M_{u}(t)\ra -\infty$ as $t \ra \infty$, $M_u(t)>0$ for $t$ small enough and  $M^{\prime}_u(t)<0$ for $t$ large enough, by the same assertions as in case (1) there exists unique $t_0>0$ such that $M_{u}$ is increasing in $(0,t_0)$, decreasing in $(t_0,\infty)$ and $M_{u}^{\prime}(t_0)=0$. Taking into account the fact $M_{u}(t_0)>0$ and $\la\ds\int_{\Om} a(x)|u|^{\de} dx<0$, we get unique $t_1$ such that $M_u(t_1)=\la \ds\int_{\Om} a(x) |u|^{\de} dx $ and $M_{u}^{\prime}(t_1)<0$ which implies $t_1u\in \mc {N}_{\la}^{-}$ that is, $t_1u$ is a local maximum.\\
 \noi \textbf{Case 3}: If $\ds\int_{\Om} a(x)|u|^{\de} dx>0$ and $\ds\int_{\Om} b(x)|u|^{r} dx<0$.\\
 In this case $M_{u}^{\prime}(t)>0$ for all $t>0$, this implies $M_{u}$ is an increasing function. Therefore, there exist unique $t_1>0$ such that $M_{u}(t_1)={\la}\ds \int_{\Om} a(x) |u|^{\de} dx$ with $\psi^{\prime \prime}_{t_1u}(1)>0$. So, $t_1u \in \mc {N_{\la}^+}$ that is, $t_1u $ is a local minimum.\\
 \noi \textbf{Case 4}: If $\ds\int_{\Om} a(x)|u|^{\de} dx<0$ and $\ds\int_{\Om} b(x)|u|^{r} dx<0$.\\
 In this case $\psi_{u}(0)=0$ and $\psi_{u}^{\prime}(t)>0$ for all $t>0$, which implies that $\psi_{u}$ is strictly increasing and hence has no critical point.
 \begin{Lemma}\label{L35}
  There exists a constant $C_{2}>0$ such that
       \begin{equation*}
          \theta_{\la}^+ \leq - \frac{(p-\de)(r-p)}{p\;\de\;r}\;C_{2}<0.
       \end{equation*}
 \end{Lemma}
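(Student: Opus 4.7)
The plan is to exhibit a specific element of $\mc N_\la^+$ and estimate its energy from above by a strictly negative quantity. Since $a$ is sign-changing, I can pick $\phi \in C_c^\infty(\Om)\subset X_{p,s_1}$ with $\ds\int_\Om a(x)|\phi|^\de\,dx>0$. By the fibering analysis carried out above (Case 1 if $\ds\int_\Om b(x)|\phi|^r\,dx\ge 0$, Case 3 otherwise), for $\la\in(0,\la_0)$ there exists a unique $t_1=t_1(\phi)>0$ with $u:=t_1\phi\in\mc N_\la^+$; in particular $\mc N_\la^+\neq\emptyset$ and $\theta_\la^+$ is a real number.

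I would then use the Nehari identity $\psi_u'(1)=0$ to eliminate $\ds\int_\Om b(x)|u|^r\,dx$ from the energy, obtaining
\[
\mc J_\la(u)=\frac{r-p}{pr}\|u\|_{X_{p,s_1}}^p+\ba\,\frac{r-q}{qr}\|u\|_{X_{q,s_2}}^q-\frac{r-\de}{\de r}\,\la\int_\Om a(x)|u|^\de\,dx.
\]
The second key step is to exploit the sign condition $\psi_u''(1)>0$ defining $\mc N_\la^+$. Subtracting $(r-1)\psi_u'(1)=0$ from the inequality $\psi_u''(1)>0$ eliminates the critical term and yields the strict bound
\[
(r-\de)\,\la\int_\Om a(x)|u|^\de\,dx>(r-p)\|u\|_{X_{p,s_1}}^p+\ba(r-q)\|u\|_{X_{q,s_2}}^q.
\]
Inserting this bound into the energy expression and grouping the coefficients of $\|u\|_{X_{p,s_1}}^p$ and $\|u\|_{X_{q,s_2}}^q$ (using the arithmetic $\frac{1}{pr}-\frac{1}{\de r}=-\frac{p-\de}{p\de r}$ and the analogous identity for $q$) gives
\[
\mc J_\la(u)<-\frac{(p-\de)(r-p)}{p\de r}\|u\|_{X_{p,s_1}}^p-\ba\,\frac{(q-\de)(r-q)}{q\de r}\|u\|_{X_{q,s_2}}^q.
\]
Since $\de\le q\le p<r$, the $\|u\|_{X_{q,s_2}}^q$ coefficient is non-positive and dropping that term only weakens the inequality. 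Setting $C_2:=\|u\|_{X_{p,s_1}}^p=t_1^p\|\phi\|_{X_{p,s_1}}^p>0$, I obtain $\theta_\la^+\le\mc J_\la(u)\le-\frac{(p-\de)(r-p)}{p\de r}\,C_2<0$.

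I do not foresee any serious obstacle; the argument is essentially algebraic once the element $u\in\mc N_\la^+$ is produced. The one point needing care is that $C_2>0$, i.e., $t_1>0$ is genuinely positive. This is automatic from the fibering analysis: $t_1$ solves $M_\phi(t)=\la\ds\int_\Om a(x)|\phi|^\de\,dx$, whose right-hand side is strictly positive while $M_\phi(0)=0$, so necessarily $t_1>0$.
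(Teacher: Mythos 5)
Your proposal is correct and follows essentially the same route as the paper: pick $u_0$ with $\int_\Om a|u_0|^\de\,dx>0$, use the fibering map analysis to produce $t_1u_0\in\mc N_\la^+$, and combine $\psi'_{u}(1)=0$ with $\psi''_{u}(1)>0$ to bound $\mc J_\la(u)$ by $\frac{(p-\de)}{\de}\bigl(\frac1r-\frac1p\bigr)\|u\|_{X_{p,s_1}}^p+\ba\frac{(q-\de)}{\de}\bigl(\frac1r-\frac1q\bigr)\|u\|_{X_{q,s_2}}^q$, discarding the non-positive $q$-term. The only cosmetic difference is which term you eliminate first from the energy (you remove $\int b|u|^r$ and use $\psi''(1)-(r-1)\psi'(1)>0$, the paper effectively works with the $\de$-combination), but the resulting estimate and constant $C_2=\|t_1u_0\|_{X_{p,s_1}}^p$ are identical.
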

 \begin{proof}
  Let $u_0 \in \mathrm{X}_{p}$ be such that $\ds\int_\Om{a(x)|u_0|^{\de}dx}>0$. Then there exists $ t_0 > 0$ such that  $t_0u_0 \in \mc{N}^{+}_{\la}$ that is, $\psi_{t_{0}u_{0}}^{\prime\prime}(1)>0.$ Hence using \eqref{eqb13} and \eqref{eqb14}, we have
    \begin{align*}
       \mc {J}_{\la}(t_{0}u_{0}) &\leq \left(\frac{p-\de}{\de}\right) \left(\frac{1}{r}-\frac{1}{p}\right) \|t_{0} u_{0}\|_{X_{p,s_1}}^{p}+ \ba \left(\frac{q-\de}{\de}\right) \left(\frac{1}{r}-\frac{1}{q}\right) \|t_{0} u_{0}\|_{X_{q,s_2}}^{q}\\
      &\leq -\frac{(p-\de)(r-p)}{p\;\de\;r} \; C_2 
    \end{align*}
   where $C_2=\|t_0 u_0 \|_{X_{p,s_1}}^{p}.$ This implies $ \theta_{\la}^+ \leq -\frac{(p-\de)(r-p)}{p\;\de\;r}\;C_{2}<0$.\QED
  \end{proof}
 \begin{Lemma}\label{tt}
  Let $\la \in (0, \la_{0})$ and ${z \in \mc {N}_{\la}}$. Then there exist $\epsilon > 0$ and a differentiable function
  $\xi : \mc {B}(0,\epsilon) \subseteq X_{p} \rightarrow \mathbb{R}^{+}$ such that $\xi(0)=1,$  $\xi(w)(z-w)\in \mc {N}_{\la}$
 and
  \begin{equation}\label{eqb18}
      \langle\xi^{\prime}(0), w\rangle = \frac{p A_p(z, w) +\ba q A_q(z, w)- \de\la\ds \int_\Omega a(x)|z|^{\de-2}z\;w\;dx- r\ds\int_\Om b(x)|z|^{r-2}z\;w dx}{(p-\de)\|z\|_{X_{p,s_1}}^{p}+ \ba (q-\de)\|z\|_{X_{q,s_2}}^{q} -(r-\de)\ds\int_\Omega b(x)|z|^{r}dx}
  \end{equation}
  for all $w\in X_{p,s_1}$.
 \end{Lemma}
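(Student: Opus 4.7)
The plan is to apply the implicit function theorem to a suitable auxiliary functional encoding membership in $\mc N_\la$. Specifically, define $F : X_{p,s_1}\times \mb R^{+} \to \mb R$ by
\[ F(w,t) := \langle \mc J_\la^\prime(t(z-w)), t(z-w)\rangle = t^p\|z-w\|_{X_{p,s_1}}^p + \ba\, t^q\|z-w\|_{X_{q,s_2}}^q - \la t^\de\!\int_\Om a(x)|z-w|^\de dx - t^r\!\int_\Om b(x)|z-w|^r dx. \]
The condition $\xi(w)(z-w)\in\mc N_\la$ is equivalent to $F(w,\xi(w))=0$. Since $z\in\mc N_\la$ we have $F(0,1)=0$, and the standard computations (together with the embeddings $X_{p,s_1}\hookrightarrow L^m(\Om)$ used throughout the paper and Lemma \ref{cheeg} to handle the $X_{q,s_2}$-term) show $F\in C^1$ in a neighbourhood of $(0,1)$.

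Next I would compute the partial derivative with respect to $t$ at $(0,1)$:
\[ \frac{\pa F}{\pa t}(0,1) = p\|z\|_{X_{p,s_1}}^p + \ba q\|z\|_{X_{q,s_2}}^q - \la\de\!\int_\Om a(x)|z|^\de dx - r\!\int_\Om b(x)|z|^r dx. \]
Using the Nehari identity $\la\int_\Om a(x)|z|^\de dx = \|z\|_{X_{p,s_1}}^p + \ba\|z\|_{X_{q,s_2}}^q - \int_\Om b(x)|z|^r dx$ to eliminate the $a$-integral, this reduces to
\[ \frac{\pa F}{\pa t}(0,1) = (p-\de)\|z\|_{X_{p,s_1}}^p + \ba(q-\de)\|z\|_{X_{q,s_2}}^q - (r-\de)\!\int_\Om b(x)|z|^r dx = \psi_z^{\prime\prime}(1). \]
Since $\la\in(0,\la_0)$, Lemma \ref{b5} gives $\mc N_\la^0=\emptyset$, so $\psi_z^{\prime\prime}(1)\ne 0$ and in particular $\frac{\pa F}{\pa t}(0,1)\ne 0$.

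By the implicit function theorem there exist $\e>0$ and a $C^1$ function $\xi:\mc B(0,\e)\subset X_{p,s_1}\to\mb R^+$ with $\xi(0)=1$ and $F(w,\xi(w))=0$ for all $\|w\|_{X_{p,s_1}}<\e$, which shrinking $\e$ if necessary keeps $\xi(w)(z-w)\ne 0$, hence in $\mc N_\la$. Differentiating the identity $F(w,\xi(w))=0$ in the direction $w$ at the origin gives
\[ \langle \xi^\prime(0), w\rangle = -\frac{\pa_w F(0,1)\cdot w}{\pa_t F(0,1)}, \]
and a direct calculation of $\pa_w F(0,1)\cdot w$ (differentiating each of the four terms of $F$ in $w$ and using $w\mapsto -w$ signs) produces
\[ \pa_w F(0,1)\cdot w = -p A_p(z,w) - \ba q A_q(z,w) + \la\de\!\int_\Om a(x)|z|^{\de-2}zw\,dx + r\!\int_\Om b(x)|z|^{r-2}zw\,dx, \]
which after substitution into the above quotient yields precisely \eqref{eqb18}.

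The only step that demands care is verifying $C^1$ regularity of $F$ as a map between Banach spaces. The polynomial terms $\|\cdot\|_{X_{p,s_1}}^p$ and $\|\cdot\|_{X_{q,s_2}}^q$ are known to be $C^1$ on their respective spaces (with derivatives $pA_p(\cdot,\cdot)$ and $qA_q(\cdot,\cdot)$), and Lemma \ref{cheeg} embeds $X_{p,s_1}$ into $X_{q,s_2}$ so the $q$-term makes sense on $X_{p,s_1}$; the potential terms are handled by the Sobolev embedding $X_{p,s_1}\hookrightarrow L^r(\Om)$ together with the hypothesis $a\in L^{r/(r-\de)}(\Om)$, $b\in L^\infty(\Om)$, so dominated convergence gives the required continuity of the derivative. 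This is the main technical point but is standard for Nehari-manifold arguments in the fractional $(p,q)$ setting.
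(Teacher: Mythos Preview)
Your proposal is correct and follows essentially the same approach as the paper: the paper defines the auxiliary functional $\mc H_z(t,w)=\langle \mc J_\la'(t(z-w)),t(z-w)\rangle$, checks $\mc H_z(1,0)=0$ and $\partial_t\mc H_z(1,0)=\psi_z''(1)\ne 0$ via Lemma~\ref{b5}, and invokes the implicit function theorem. If anything, your write-up is more detailed, since you spell out the computation of $\partial_w F(0,1)\cdot w$ leading to \eqref{eqb18} and address the $C^1$ regularity of $F$, both of which the paper leaves implicit.
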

 \begin{proof}
  For  ${z\in \mc {N}_\la}$ define  a function $\mc {H}_z:\mathbb{R}\times X_{p} \rightarrow \mathbb{R}$ given by
  \begin{equation*}
  \begin{aligned}
    \mc {H}_z(t,w) &:= \langle J^{\prime}_{\la}(t(z-w)),t(z-w)\rangle\\& \;=
      t^{p}\|z-w\|_{X_{p,s_1}}^{p}+ \ba t^{q}\|z-w\|_{X_{q,s_2}}^{q}-t^{\de}\la \int_\Omega{a(x)|z-w|^{\de}dx}-t^{r}\int_\Omega b(x){|z-w|^{r}dx}.
   \end{aligned}
   \end{equation*}
\noi Then, $\mc {H}_z(1,0) = \langle J^{\prime}_{\la} (z),z\rangle = 0$ and by Lemma \ref{b5}, we deduce that 
\begin{equation*}
\frac{\partial}{ \partial t}\mc {H}_z(1,0)= (p-\de)\|z\|^{p}_{X_{p,s_1}}+ \ba (q-\de)\|z\|^{q}_{X_{q,s_2}} -(r-\de)\int_{\Om}b(x)|z|^{r}dx \neq 0.
\end{equation*}
 \noi Now, by implicit function theorem there exist $\epsilon >0$ and a  differentiable function $\xi : \mc {B}(0, \epsilon) \subseteq \mathrm{X}_{p} \rightarrow \mathbb{R}$ such that $\xi(0) = 1$, \eqref{eqb18} holds and $\mc {H}_u(\xi(w),w) = 0$ $\textrm{for all}\; w \in \mc {B}(0, \epsilon)$,
which is equivalent to 

\begin{equation*}
\begin{aligned}
 0&= \|\xi(w)(z-w)\|_{X_{p,s_1}}^{p} +\ba \|\xi(w)(z-w)\|_{X_{q,s_2}}^{q}- \la \int_\Omega{a(x)|\xi(w)(z-w)|^{\de}dx} \\&\quad- \int_\Omega b(x){|\xi(w)(z-w)|^{r}dx} 
 \end{aligned}
\end{equation*}
 for all $w \in \mc {B}(0, \e)$. Hence $ \xi(w)(z-w) \in \mc {N}_{\la}$.
 \QED
 \end{proof}

\section{Multiplicity results}
In this section we prove existence and multiplicity of non-trivial solutions of problem $(\mc P_\la)$ for the case $\de<q\leq p<r\leq p^*_{s_1}$.  
 \begin{Proposition}\label{propb2}
	Let $\la \in (0,\la_{0})$, then there exists a sequence $\{u_k\} \subset \mc {N}_{\la}$ such that
	\begin{center}
		$\mc {J}_{\la}(u_{k}) = \theta_{\la}+o_k(1),$ and $\mc {J}_{\la}^{\prime}(u_{k}) = o_k(1).$
	\end{center}
\end{Proposition}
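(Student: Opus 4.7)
The plan is to apply Ekeland's variational principle directly on the Nehari manifold $\mc N_\la$ and then use the local implicit‐function parametrization supplied by Lemma \ref{tt} to transfer approximate criticality along $\mc N_\la$ to approximate criticality of $\mc J_\la$ on the full space $X_{p,s_1}$. This is the standard Tarantello/Brown--Zhang type argument adapted to the doubly nonlocal setting.

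First, I would record the preliminary bounds on the would‑be minimizing sequence. Since $\mc J_\la$ is bounded below and coercive on $\mc N_\la$ by Lemma \ref{le44}, and since $\mc N_\la$ is closed away from the origin (being the zero set of $u \mapsto \langle \mc J_\la'(u), u\rangle$ restricted away from $0$, using $\mc N_\la^0 = \emptyset$ from Lemma \ref{b5}), Ekeland's variational principle produces $\{u_k\} \subset \mc N_\la$ with
\begin{align*}
\mc J_\la(u_k) \le \theta_\la + \tfrac{1}{k}, \qquad \mc J_\la(w) \ge \mc J_\la(u_k) - \tfrac{1}{k}\|w - u_k\|_{X_{p,s_1}} \;\; \text{for every } w\in\mc N_\la.
\end{align*}
Coercivity yields $\sup_k \|u_k\|_{X_{p,s_1}}<\infty$. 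From Lemma \ref{L35} we have $\theta_\la \le \theta_\la^+<0$, so any subsequence with $\|u_k\|_{X_{p,s_1}}\to 0$ would force $\mc J_\la(u_k)\to 0$, a contradiction; hence $\|u_k\|_{X_{p,s_1}}\ge\eta>0$ for all $k$. In particular, once $\theta_\la^-\ge 0$ is observed (as expected for such concave--convex problems), one also sees that $u_k \in \mc N_\la^+$ for $k$ large.

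Second, I would convert the Ekeland inequality into an approximate‑derivative bound. Fix $w \in X_{p,s_1}$ with $\|w\|_{X_{p,s_1}}=1$. Applying Lemma \ref{tt} at $z = u_k$ produces $\varepsilon_k>0$ and a differentiable $\xi_k$ such that
\[
z_{k,t} := \xi_k(tw)\,(u_k - tw) \in \mc N_\la \qquad \text{for } t \in [0,\varepsilon_k),
\]
with $\xi_k(0)=1$. Inserting $z_{k,t}$ into the Ekeland inequality, dividing by $t>0$ and sending $t\to 0^+$, together with $\langle \mc J_\la'(u_k), u_k\rangle = 0$, yields
\[
-\langle \mc J_\la'(u_k), w\rangle \;\ge\; -\tfrac{1}{k}\bigl(1 + |\langle \xi_k'(0), w\rangle|\,\|u_k\|_{X_{p,s_1}}\bigr).
\]
Repeating with $-w$ in place of $w$ gives the two‑sided estimate $|\langle \mc J_\la'(u_k), w\rangle| \le \tfrac{C}{k}\bigl(1 + |\langle \xi_k'(0), w\rangle|\bigr)$.

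Third, the main obstacle is a uniform bound $\sup_{\|w\|_{X_{p,s_1}}=1}|\langle \xi_k'(0), w\rangle|\le C$ independent of $k$. The numerator in formula \eqref{eqb18} is controlled by $\|u_k\|_{X_{p,s_1}}^{p-1}$ plus lower‑order terms (via H\"older, Lemma \ref{cheeg}, and the Sobolev embedding into $L^r$ and $L^\de$), all uniformly bounded by Step 1. What really must be ruled out is degeneracy of the denominator
\[
D_k := (p-\de)\|u_k\|^p_{X_{p,s_1}} + \ba(q-\de)\|u_k\|^q_{X_{q,s_2}} - (r-\de)\!\int_\Om b(x)|u_k|^r\,dx.
\]
Using $u_k\in\mc N_\la$ one rewrites $D_k = -(r-p)\|u_k\|^p_{X_{p,s_1}} - \ba(r-q)\|u_k\|^q_{X_{q,s_2}} + (r-\de)\la\!\int_\Om a(x)|u_k|^\de dx = -(r-\de)\mc E_\la(u_k)$, with $\mc E_\la$ the functional introduced in the proof of Lemma \ref{b5}. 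If $|D_k|\to 0$ along a subsequence, then the quantitative form of the computation in Case 2 of Lemma \ref{b5} (which uses $\la<\la_0$ together with the lower bound $\|u_k\|_{X_{p,s_1}}\ge\eta$) produces a contradiction, since that computation shows $|\mc E_\la(u)|$ is bounded below by a positive constant depending only on $\eta$, $\la_0$, $\|a\|$, $\|b\|$ whenever $u\in\mc N_\la^+$ with $\|u\|\ge\eta$. Hence $\inf_k |D_k|>0$, $\langle \xi_k'(0), w\rangle$ is uniformly bounded on the unit sphere, and $\|\mc J_\la'(u_k)\|_{X_{p,s_1}^*}=O(1/k)$, completing the proof.
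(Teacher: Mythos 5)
Your overall strategy---Ekeland's principle on $\mc N_\la$, the implicit-function parametrization of Lemma \ref{tt}, and the reduction of everything to a uniform bound on $\langle\xi_k'(0),\cdot\rangle$, i.e.\ to non-degeneracy of the denominator $D_k$---is exactly the paper's argument. The gap is in how you rule out $D_k\to 0$. You assert that the Case 2 computation of Lemma \ref{b5} gives a positive lower bound for $|\mc E_\la(u)|$ on $\{u\in\mc N_\la^+:\|u\|_{X_{p,s_1}}\ge\eta\}$, with $\eta$ the crude lower bound coming from $\theta_\la\le\theta_\la^+<0$. That computation does not deliver this: it only yields $\mc E_\la(u)\ge\|u\|_{X_{p,s_1}}^{\de}\big[\frac{r-p}{r-\de}\|u\|_{X_{p,s_1}}^{p-\de}-\la\|a\|_{L^{\frac{r}{r-\de}}(\Om)}S_r^{-\de/p}\big]$, and for $\la<\la_0$ the bracket is positive only when $\|u\|_{X_{p,s_1}}\ge T_1:=\big(\frac{(p-\de)S_r^{r/p}}{(r-\de)\|b\|_{L^\infty(\Om)}}\big)^{1/(r-p)}$. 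With only $\|u\|\ge\eta$ the bracket can be negative, and in fact on the very set you restrict to this mechanism cannot work: on $\mc N_\la$ one has $D=\psi_u''(1)=-(r-\de)\mc E_\la(u)$, so $u\in\mc N_\la^+$ forces $\mc E_\la(u)<0$, hence the bracket above is necessarily negative there and the H\"older estimate gives no positive lower bound on $|\mc E_\la|$ at all. The missing step---which is the heart of the paper's proof---is that the threshold $T_1$ must be extracted from the degeneracy assumption itself: if $D_k=o_k(1)$, then $(p-\de)\|u_k\|_{X_{p,s_1}}^{p}\le(r-\de)\|b\|_{L^\infty(\Om)}S_r^{-r/p}\|u_k\|_{X_{p,s_1}}^{r}+o_k(1)$, and since $\|u_k\|_{X_{p,s_1}}\ge\eta>0$ this yields $\|u_k\|_{X_{p,s_1}}\ge T_1+o_k(1)$; only then does the $\la<\la_0$ computation force $\mc E_\la(u_k)\ge c>0$ for large $k$, contradicting $\mc E_\la(u_k)=-D_k/(r-\de)=o_k(1)$. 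Once you insert this step, your argument closes and coincides with the paper's.

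A related point: your reduction to $u_k\in\mc N_\la^+$ rests on the unproved aside ``$\theta_\la^-\ge 0$, as expected,'' which neither you nor the paper establishes, and it is also undesirable because the same Proposition is later applied verbatim to minimizing sequences on $\mc N_\la^-$ in the proof of Theorem \ref{pqthm1}. The contradiction argument sketched above uses only $u_k\in\mc N_\la$, $\la<\la_0$ and the norm bounds from your Step 1, so the restriction to $\mc N_\la^+$ should simply be dropped.
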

\begin{proof}
	Using Lemma \ref{le44} and  Ekeland variational principle \cite{eke1974}, there exists a minimizing sequence $\{u_k\}\subset\mc {N}_\la $ such that
	\begin{equation}\label{evp1}
	\mc {J}_\la(u_k)< \theta_\la+\frac{1}{k}, \mbox{ and}
	\end{equation}
	\begin{equation}\label{eqb20}
	\mc {J}_\la(u_k)< \mc {J}_\la(v)+\frac{1}{k}\|v-u_k\|_{X_{p,s_1}}\; \textrm{for each}\; v \in \mc {N}_\la.
	\end{equation}
	For large $k$, using \eqref{evp1}, we have
	\begin{align*}
	\mc J_{\la}(u_k)&= \left(\frac{1}{p}-\frac{1}{r}\right)\|u\|_{X_{p,s_1}}^{p} +\ba \left(\frac{1}{q}-\frac{1}{r}\right)\|u\|_{X_{q,s_2}}^{q}-\la\left(\frac{1}{\de}-\frac{1}{r}\right)\int_\Omega a(x)|u|^{\de}dx\\
	&< \theta_\la+\frac{1}{k}<\theta_\la^{+}<0.
	\end{align*}
	Therefore, 
	$$\left(\frac{1}{p}-\frac{1}{r}\right)\|u\|_{X_{p,s_1}}^{p} -\la\left(\frac{1}{\de}-\frac{1}{r}\right)\int_\Omega a(x)|u|^{\de}dx <0,$$
	which implies $u_k\not\equiv 0$ for large $k$.
	Then, using H\"older's inequality, we obtain 
	\begin{equation}\label{eqb21}
	\left(\frac{\;\la(-\theta_{\la}^{+})\;\de\;rS_{r}^{\frac{\de}{p}}}{(r-\de)\|a\|_{L^\frac{r}{r-\de}(\Om)}}\right)^{\frac{1}{\de}}\leq \|u\|_{X_{p,s_1}}\leq \left(\frac{\;\la p(r-\de)\|a\|_{L^\frac{r}{r-\de}(\Om)}}{\de(r-p)S_{r}^{\frac{\de}{p}}}\right)^{\frac{1}{p-\de}}.
	\end{equation}
	\noi Now, we prove $\|\mc {J}^{\prime}_\la(u_k)\|\rightarrow 0$ as $k\rightarrow \infty$. Employing Lemma \ref{tt} for each $u_k$ we obtain differentiable functions $\xi_k:\mc {B}(0, \epsilon_k)\rightarrow \mathbb{R}$ for some $\epsilon_k>0$ such that $\xi_k(v)(u_k-v)\in \mc {N}_\la$,\; $\textrm{for all}\; v\in \mc {B}(0, \epsilon_k).$
	\noindent Fix $k\in\mb N$, $u(\not\equiv 0) \in X_{p,s_1}$ and $0<\rho<\epsilon_k$. Set $v_\rho=\frac{\rho u}{\|u\|_{X_{p,s_1}}}$ and $h_\rho=\xi_k(v_\rho)(u_k-v_\rho)$, then $h_\rho \in \mc {N}_\la$. Therefore, from \eqref{eqb20}, we have
	\begin{align*}
	\mc {J}_\la(h_\rho)-\mc {J}_\la(u_k)\geq-\frac{1}{k}\|h_\rho-u_k\|_{X_{p,s_1}}
	\end{align*}
	which on using mean value theorem gives us 
	\begin{align*}
	\langle J^{\prime}_{\la}(u_k),h_\rho-u_k \rangle+ o\big(\|h_\rho-u_k\|_{X_{p,s_1}}\big)\geq -\frac{1}{k}\|h_\rho-u_k\|_{X_{p,s_1}}.
	\end{align*}
	It implies that
	{\small\begin{align*}
		-\rho\left\langle J^{\prime}_{\la}(u_k),\frac{u}{\|u\|_{X_{p,s_1}}} \right\rangle +(\xi_k(v\rho)-1)\langle J^{\prime}_{\la}(u_k),u_k-v\rho \rangle\geq -\frac{\|h_\rho-u_k\|_{X_{s_1}}}{k}+o\big(\|h_\rho-u_k\|_{X_{p,s_1}}\big).
		\end{align*}\small}
	Thus, using $h_\rho\in\mc N_\la$, we deduce that
	 \begin{align*}
		\left\langle J^{\prime}_{\la}(u_k),\frac{u}{\|u\|_{X_{p,s_1}}} \right\rangle &\leq  \frac{\|h_\rho-u_k\|_{X_{p,s_1}}}{k \rho}+\frac{o(\|h_\rho-u_k\|_{X_{p,s_1}})}{\rho} \\ & \qquad+\frac{(\xi_k(v\rho)-1)}{\rho}\left\langle J^{\prime}_{\la}(u_k)-J^{\prime}_{\la}(h\rho),u_k-v\rho \right\rangle.
		\end{align*}
	\noi Since $\ds\lim_{k\rightarrow \infty}\frac{|\xi_k(v_\rho)-1|}{\rho}\leq \|\xi_k(0)\|_{X_{p,s_1}}$ and $\|h_\rho-u_k\|_{X_{p,s_1}}\leq \rho|\xi_k(v_\rho)|+|\xi_k(v_\rho)-1|\|u_k\|_{X_{p,s_1}}$, for  fixed $k$ if $\rho \ra 0$, by \eqref{eqb21}, there exists a constant $C>0$ independent of $\rho$ such that 
	\begin{equation*}
	\left\langle \mc {J}^{\prime}_\la(u_k),\frac{u}{\|u\|_{X_{p,s_1}}}\right\rangle\leq\frac{C}{k}(1+\|\xi_k^{\prime}(0)\|_{X_{p,s_1}}).
	\end{equation*}
	Thus, in order to complete the proof it is sufficient to prove $\|\xi_k^{\prime}(0)\|_{X_{p,s_1}}$ is bounded. Using \eqref{eqb18} and \eqref{eqb21}, we infer that
	\begin{equation*}
	|\langle \xi_k^{\prime}(0), v\rangle|\leq \frac{K\|v\|_{X_{p,s_1}}}{|(p-\de)\|u_k\|_{X_{p,s_1}}^{p} +\ba (q-\de)\|u_k\|_{X_{q,s_2}}^{q}-(r-\de)\ds\int_\Omega b(x)|u_k|^{r}dx|}
	\end{equation*}
	 for some $K>0$. We claim that
	\; $(p-\de)\|u_k\|_{X_{p,s_1}}^{p} +\ba (q-\de)\|u_k\|_{X_{q,s_2}}^{q}-(r-\de)\ds\int_\Omega b(x)|u_k|^{r}dx$ is bounded away from zero. On the contrary suppose there exists a subsequence of $\{u_k\}$ (still denoting by $\{u_k\}$) such that
	\begin{equation} \label{baw}
	(p-\de)\|u_k\|_{X_{p,s_1}}^{p} +\ba (q-\de)\|u_k\|_{X_{q,s_2}}^{q}-(r-\de)\int_\Omega b(x)|u_k|^{r}dx=o_k(1).
	\end{equation}
	Then, from \eqref{baw} and the fact that $u_k\in\mc N_\la$, we get $\mc E_\la(u_k) = o_k(1)$. Moreover, 
	\begin{equation*}
	\|u_k\|_{X_{p,s_1}} \geq  \left(\frac{(p-\de)S_{r}^{\frac{r}{p}}}{(r-\de)\|b\|_{L^\infty(\Om)}}\right)^{\frac{1}{r-p}}+o_k(1).
	\end{equation*}
	This gives that there exists a positive constant $d$ such that $0< d\leq \|u_k\|_{X_{p,s_1}}$ for $k$ large.
	Now following the proof of Lemma \ref{b5}, we get $\mc E_\la (u_k)>0$ for large $k$, which is not possible. 
	Therefore claim holds, which shows that $\mc{J}^\prime_\la(u_k) \ra 0$ as $k\ra\infty$.\QED
\end{proof}
\begin{Lemma}\label{lemmab3}
If $r<p^*_{s_1}$  then every Palais-Smale sequence of $\mc {J}_\la$ has a convergent subsequence. That is,
if $\{u_k\}\subset X_{p,s_1}$ satisfies
\begin{equation}\label{eqb17}
\mc {J}_\la(u_k)=c+o_k(1) \;\text{and}\; \mc {J}'_\la(u_k)=o_k(1)\; \textrm{in}\; X_{p,s_1}^{\prime},
\end{equation}
 then $\{u_k\}$ has a convergent subsequence in $X_{p,s_1}$.
\end{Lemma}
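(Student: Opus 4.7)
The plan is to follow the standard three-step scheme for Palais-Smale sequences in the subcritical setting: boundedness, weak convergence with compact embedding, and the $(S_+)$ property of the operators $A_p$ and $A_q$.

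First, I would establish that $\{u_k\}$ is bounded in $X_{p,s_1}$. Combining $\mc{J}_\la(u_k)-\frac{1}{r}\langle\mc{J}_\la'(u_k),u_k\rangle$ and using $\delta<q\le p<r$, one obtains
\[
 c+o_k(1)+o_k(1)\|u_k\|_{X_{p,s_1}} \ge \Big(\tfrac{1}{p}-\tfrac{1}{r}\Big)\|u_k\|_{X_{p,s_1}}^{p} -\la\Big(\tfrac{1}{\de}-\tfrac{1}{r}\Big)\|a\|_{L^{r/(r-\de)}(\Om)} S_r^{-\de/p}\|u_k\|_{X_{p,s_1}}^{\de},
\]
where the $\beta$-term has been dropped for a lower bound. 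Since $\de<p$, this forces $\|u_k\|_{X_{p,s_1}}$ to stay bounded. By Lemma~\ref{cheeg}, $\|u_k\|_{X_{q,s_2}}$ is then also bounded.

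Next, passing to a subsequence I obtain $u_k\rightharpoonup u$ in $X_{p,s_1}$ and (by Lemma~\ref{cheeg}) also in $X_{q,s_2}$. Because $r<p^*_{s_1}$, the compact embedding of $X_{p,s_1}$ into $L^r(\Om)$ (and into $L^\de(\Om)$, since $\de\le p<r<p^*_{s_1}$) gives $u_k\to u$ strongly in $L^r(\Om)$ and $L^\de(\Om)$. Consequently, by H\"older's inequality,
\[
 \la\int_{\Om}a(x)|u_k|^{\de-2}u_k(u_k-u)\,dx \to 0 \quad\text{and}\quad \int_{\Om}b(x)|u_k|^{r-2}u_k(u_k-u)\,dx \to 0.
\]
Testing $\mc{J}_\la'(u_k)=o_k(1)$ against $u_k-u$ and using these limits yields
\[
 A_p(u_k,u_k-u)+\beta A_q(u_k,u_k-u)\to 0.
\]

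Now I would invoke the weak-to-strong trick. Weak convergence in $X_{p,s_1}$ and $X_{q,s_2}$ gives $A_p(u,u_k-u)\to 0$ and $A_q(u,u_k-u)\to 0$, so
\[
 \langle A_p(u_k)-A_p(u),u_k-u\rangle + \beta\langle A_q(u_k)-A_q(u),u_k-u\rangle \to 0.
\]
Each bracket is non-negative by the elementary monotonicity inequality $\bigl(|\xi|^{m-2}\xi-|\eta|^{m-2}\eta\bigr)(\xi-\eta)\ge 0$ (applied pointwise in $x,y$), so both tend to $0$ individually. From the Simon-type inequalities (for $p\ge 2$ directly, and for $1<p<2$ through H\"older with the factor $(\|u_k\|_{X_{p,s_1}}+\|u\|_{X_{p,s_1}})^{2-p}$), this implies $\|u_k-u\|_{X_{p,s_1}}\to 0$, which finishes the proof.

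The main obstacle, and the reason this lemma requires $r<p^*_{s_1}$, is the passage from weak to strong convergence in the nonlinear terms: without the compact embedding into $L^r(\Om)$ one cannot kill the top-order term $\int b(x)|u_k|^{r-2}u_k(u_k-u)\,dx$, and the argument for strong convergence of $A_p$ via the $(S_+)$ property collapses. Separating the contributions of $A_p$ and $A_q$ in the end (using non-negativity of each monotonicity bracket) is the technical point that makes the doubly nonlocal structure compatible with the scheme.
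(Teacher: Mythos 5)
Your proposal is correct and follows essentially the same route as the paper: boundedness (which the paper only calls ``standard arguments''), weak convergence plus the compact embedding $X_{p,s_1}\hookrightarrow L^\ga(\Om)$ for $\ga<p^*_{s_1}$ to kill the weight terms via H\"older, and then the Simon-type monotonicity inequalities applied to $\langle \mc J_\la'(u_k)-\mc J_\la'(u),u_k-u\rangle\to 0$, with the same case distinction according to whether the exponents are above or below $2$. The only cosmetic difference is that you split the $A_p$ and $A_q$ monotonicity brackets before applying the inequalities (which makes the mixed case $q<2\le p$ automatic), whereas the paper estimates their sum directly.
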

\begin{proof}
Let $\{u_{k}\}\subset X_{p,s_1}$ be a sequence satisfying \eqref{eqb17}. By standard arguments we can show that $\{u_{k}\}$ is bounded in $X_{p}$. So, we can assume there exists $u_\la\in X_{p,s_1}$ such that upto subsequence $u_{k} \rightharpoonup u_{\la}$ weakly in $\mathrm{X}_{p}$, $u_{k} \rightarrow u_\la$ strongly in $\mathrm{L}^{\ga}(\Omega)$, for $1 \le \ga < p^*_{s_1}$ and $u_{k}(x) \rightarrow u_\la(x)$ \text{a.e.} in $\Omega$. Since $\langle\mc {J}_{\la}^{\prime}(u_k)-\mc {J}_{\la}^{\prime}(u_\la), (u_{k}-u_\la)\rangle \ra 0$, as $k\rightarrow \infty$, we deduce that
\begin{align*}
    o_k(1)= &\langle\mc {J}_\la^\prime (u_k)-\mc {J}_\la^\prime(u_\la), u_k-u_\la\rangle \\
     = &A_p(u_k, u_k-u_\la)- A_p(u_\la, u_k-u_\la)+\ba (A_q(u_k, u_k-u_\la)- A_q(u_\la, u_k-u_\la) )\\
    & -\la
   \int_{\Om} a(x)\big(|u_k(x)|^{\de-2}u_k(x)-|u_\la(x)|^{\de-2}u_\la(x)\big)(u_k(x)-u_\la(x))dx \\ &-\int_{\Om} b(x)\big(|u_k(x)|^{r-2}u_k(x)-|u_\la(x)|^{r-2}u_\la(x)\big)(u_k(x)-u_\la(x))dx.
\end{align*}
By H\"older's inequality, it follows that
\begin{equation*}
\int_\Omega{a(x)|u_{k}|^{\de-2}u_k|u_k-u_\la|dx} \leq \|a\|_{L^\frac{r}{r-\de}(\Om)}\|u_k\|_{L^r(\Om)}^{\de-1}\|u_k-u_\la\|_{L^r(\Om)}\rightarrow 0\;\;\mathrm{as}\;\; k \ra \infty \mbox{ and}
\end{equation*}
\noi similarly, $\ds\int_\Omega{b(x)|u_{k}|^{r-2}u_k|u_k-u_\la|dx} \ra 0$ as $k\ra\infty$.
Now we divide the proof into three cases \\
\noi \textbf{Case 1:} If $p, q\ge 2$.\\ 
Using the inequality $|a-b|^{l} \leq 2^{l-2}(|a|^{l-2}a-|b|^{l-2}b)(a-b) \;\text{for}\; a, b \in \mathbb{R}^{n}\text{ and } l \geq 2,$ we obtain
\begin{equation*}
	o_k(1)=\langle\mc{J}_\la^\prime(u_k)-\mc{J}_\la^\prime(u_\la), u_k-u_\la\rangle  \ge
	 \|u_k-u_\la\|_{X_{p,s_1}}^{p}+ \ba \|u_k-u_\la\|_{X_{q,s_2}}^{q},
\end{equation*}
it implies that $$\|u_k-u_\la\|_{X_{p,s_1}}\ra 0 \ \text{and } \|u_k-u_\la\|_{X_{q,s_2}}\ra 0\quad \mbox{as }k\ra\infty.$$
\noi \textbf{Case 2:} If $1<q<p<2$.\\
As we know that 
for $a,\ b\in\mb{R}^n$ and $1<m<2$, there exists $C_m >0$  a constant such that
$$|a-b|^m\le C_m((|a|^{m-2}a-|b|^{m-2}b)(a-b))^{\frac{m}{2}}(|a|^m+|b|^m)^{\frac{2-m}{2}}.$$
Set $a=u_k(x)-u_k(y)$, $b=u_\la(x)-u_\la(y)$ and using H\"older inequality, we deduce that
 \begin{align*}
	\|u_k-u_\la\|_{X_{p,s_1}}^{p} 
	&\le C (A_p(u_k, u_k-u_\la) - A_p(u_\la, u_k-u_\la) )^\frac{p}{2} &\\
	&\qquad\quad \left(\ds\int_Q\frac{|u_k(x)-u_k(y)|^{p}+|u_\la(x)-u_\la(y)|^{p}}{|x-y|^{n+ps_1}} \right)^\frac{2-p}{2} 
 \end{align*}
and boundedness of $\{u_k\}$ in $X_{p,s_1}$, implies 
$$\|u_k-u_\la\|_{X_{p,s_1}}^{p}\leq C (A_p(u_k, u_k-u_\la) - A_p(u_\la, u_k-u_\la) )^\frac{p}{2}. $$
Thus,
\begin{align*}
	o_k(1)=\langle \mc{J}_\la^\prime(u_k)-\mc{J}_\la^\prime(u_\la), u_k-u_\la\rangle \ge \frac{1}{C}\big(\|u_k-u_\la\|_{X_{p,s_1}}^2+\ba\|u_k-u_\la\|_{X_{q,s_2}}^2\big).
\end{align*}
Hence, it concludes the proof.\\
\noi \textbf{Case 3:} If $1<q<2<p$.\\
	Coupling the arguments of case 1 and case 2 one can easily prove the convergence of the sequence. 
  \QED
\end{proof}

 \textbf{Proof of Theorem \ref{pqthm1}:} 
 Using proposition \ref{propb2}, we get minimizing sequences $\{u_k\}$ in $\mc N_\la^+$, and $\{v_k\}$ in $\mc N_\la^-$ and by lemma \ref{lemmab3}, there exist $u_\la, \ v_\la\in X_{p,s_1}$ such that $u_k\ra u_\la$ and $v_k\ra v_\la$ strongly in $X_{p,s_1}$ for all $\la\in(0,\la_0)$. Therefore, $u_\la$ and $v_\la$ are weak solutions of problem $(\mc P_\la)$. With the help of lemma \ref{L35}, we conclude $u_\la\not\equiv 0$, hence $u_\la\in\mc N_\la$. Moreover, by means of lemma \ref{b5}, $u_\la\in\mc N_\la^+$ with $\mc J_\la(u_\la)=\theta_{\la}^+$ and since $\mc N_\la^-$ is closed, $v_\la\in\mc N_\la^-$ with $\mc J_\la(v_\la)= \theta_{\la}^-$. Using the fact $\mc N_\la^+ \cap \mc N_\la^- =\emptyset$, we note that $u_\la$ and $v_\la$ are distinct.\\
 Now we prove non-negativity of $u_\la$. If $u_\la\ge 0$, then we have a non negative solution of $(\mc P_\la)$ which is also a minimizer for $\mc J_\la$ in $\mc N_\la^+$, otherwise we have $|u_\la|\not\equiv 0$, hence by fibering map analysis we get unique $t_1>0$ such that $t_1u_\la\in\mc N_\la^+$. We note that $M_{|u_\la|}(1)\leq M_{u_\la}(1)=\la\ds\int_\Om a(x)|u_\la|^\de= M_{|u_\la|}(t_1)\le M_{u_\la}(t_1)$ and $0< M^\prime_{u_\la}(1)$, because of the fact $u_\la\in\mc N_\la^+$, which implies $t_1\ge 1$. Thus,
 $$\theta_\la^+\le\psi_{|u_\la|}(t_1)\leq \psi_{|u_\la|}(1)\leq\psi_{u_\la}(1)=\theta_{\la}^+.$$
 Hence, $\mc J_\la(t_1|u_\la|)=\psi_{|u_\la|}(t_1)=\theta_\la^+$ and $t_1|u_\la|\in\mc N_\la^+$ that is, $t_1|u_\la|$ is a nonnegative solution of problem $(\mc P_\la)$ in $\mc N_\la^+$. By using similar arguments $v_\la$ is also a non-negative solution. \QED

Now we will show the existence and multiplicity of solutions of $(\mc P_\la)$ for the case $r=p^*_{s_1}$. From now onwards,  we will assume function $a(x)$ is continuous and there exists  $\kappa_1>0$ such that $m_a:=\ds\inf_ {x\in B_{\kappa_1}(0)}a(x)$ is positive, and $b(x)\equiv 1$ in $\Om$.
 \begin{Theorem}\label{thb2}
  If $r= p^*_{s_1}$ and $\{u_k\}\subset \mc{N}_{\la}$ is a $(PS)_c$ sequence for $\mc{J}_\la$ with $u_k \rightharpoonup u$ weakly in $X_{p,s_1}$, then $\mc{J}_\la^{\prime}(u)=0$ and there exists a positive constant $ C_\de$  depending on $p,s_1,n,S,|\Om|,\de$ such that $\mc J_{\la}(u)\geq - C_\de\; \la^{\frac{p}{p-\de}},$
  where
    \begin{equation}\label{eqb27}
       C_\de=\left(\frac{(p^*_{s_1}-\de)(p-\de)}{p \; \de \;p^*_{s_1}}\right) \left(\frac{p^*_{s_1}-\de}{p^*_{s_1}-p}\right)^{\frac{p\de}{p-\de}} S^{\frac{-\de}{p-\de}}\|a\|_{L^\infty(\Om)}^{\frac{p}{p-\de}} \; |\Om|^{\frac{p(p^*_{s_1}-\de)}{(p-\de)p^*_{s_1}}}.
    \end{equation} 
   \end{Theorem}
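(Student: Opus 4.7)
My plan is to split the statement into two parts and handle them in order: first show the weak limit $u$ satisfies $\mc{J}_\la^\prime(u)=0$, then derive the lower bound on $\mc{J}_\la(u)$ via a Nehari-type identity that kills the critical nonlinearity.

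For the first part, I would fix $\phi\in X_{p,s_1}$ and pass to the limit in $\langle \mc{J}_\la^\prime(u_k),\phi\rangle=o_k(1)$. A standard computation combining $\mc{J}_\la(u_k)=c+o_k(1)$ with $\langle \mc{J}_\la^\prime(u_k),u_k\rangle=o_k(1)\|u_k\|_{X_{p,s_1}}$ (using $\de<p<r$) gives boundedness of $\{u_k\}$ in $X_{p,s_1}$, and Lemma~\ref{cheeg} then bounds it also in $X_{q,s_2}$. Up to a subsequence, $u_k\rightharpoonup u$ weakly in both spaces, $u_k\to u$ in $L^{m}(\Om)$ for every $1\le m<p^*_{s_1}$, and $u_k\to u$ a.e.\ in $\Om$. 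The subcritical term $\la\int_\Om a(x)|u_k|^{\de-2}u_k\phi\,dx$ converges to its limit via the compact embedding and H\"older. For $A_p(u_k,\phi)$, a.e.\ convergence of the density $|u_k(x)-u_k(y)|^{p-2}(u_k(x)-u_k(y))$ together with its uniform boundedness in $L^{p^\prime}(Q,|x-y|^{-(n+ps_1)}dxdy)$ yields weak convergence in that space; pairing against $(\phi(x)-\phi(y))$, which lies in $L^{p}$ of the same measure, gives $A_p(u_k,\phi)\to A_p(u,\phi)$, and the same scheme handles $A_q$. The critical term is treated similarly: $\{|u_k|^{p^*_{s_1}-2}u_k\}$ is bounded in $L^{(p^*_{s_1})^\prime}(\Om)$ and converges a.e., hence weakly, while $\phi$ lies in the dual $L^{p^*_{s_1}}(\Om)$. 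This yields $\mc{J}_\la^\prime(u)=0$.

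For the second part, using $\mc{J}_\la^\prime(u)=0$, the identity $\mc{J}_\la(u)=\mc{J}_\la(u)-\tfrac{1}{r}\langle \mc{J}_\la^\prime(u),u\rangle$ with $r=p^*_{s_1}$ eliminates the critical term and produces
\[
\mc{J}_\la(u)=\Big(\tfrac{1}{p}-\tfrac{1}{r}\Big)\|u\|_{X_{p,s_1}}^{p}+\ba\Big(\tfrac{1}{q}-\tfrac{1}{r}\Big)\|u\|_{X_{q,s_2}}^{q}-\la\Big(\tfrac{1}{\de}-\tfrac{1}{r}\Big)\int_\Om a(x)|u|^{\de}dx.
\]
Dropping the nonnegative $\ba$-term and estimating $\int_\Om a(x)|u|^{\de}dx\le \|a\|_{L^\infty}|\Om|^{1-\de/r}\|u\|_{L^r(\Om)}^{\de}\le \|a\|_{L^\infty}|\Om|^{1-\de/r}S^{-\de/p}\|u\|_{X_{p,s_1}}^{\de}$ (H\"older plus Sobolev) reduces the task to bounding below $f(t)=At^{p}-\la B t^{\de}$ in $t=\|u\|_{X_{p,s_1}}\ge 0$, with $A=(r-p)/(pr)$ and $B=\big((r-\de)/(\de r)\big)\|a\|_{L^\infty}|\Om|^{1-\de/r}S^{-\de/p}$. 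Elementary calculus gives a unique critical point $t_\ast=(\la\de B/(pA))^{1/(p-\de)}$ with $f(t_\ast)=-\tfrac{p-\de}{p}B\la\,t_\ast^{\de}$; collecting powers of $\la$ and simplifying exhibits this value as $-C_\de\la^{p/(p-\de)}$ with $C_\de$ of the form~\eqref{eqb27}.

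The analytic core of the argument is the first part: the fractional $(p,q)$-operator is nonlinear, so one cannot simply read off $\mc{J}_\la^\prime(u)=0$ from weak semicontinuity, but must invoke the Brezis--Lieb--style argument based on a.e.\ convergence and uniform $L^{p^\prime}$-boundedness of the nonlinear density, and the same consideration controls the critical term. The remaining part is purely algebraic. The key subtlety I expect is that strong convergence $u_k\to u$ in $X_{p,s_1}$ cannot be expected at every Palais--Smale level in the critical regime, so the lower bound must be extracted from the weak limit $u$ alone, which is precisely what the Nehari-type manipulation achieves.
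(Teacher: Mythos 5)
Your proposal is correct and follows essentially the same route as the paper: weak convergence of the $(p,q)$-densities in the weighted $L^{p^\prime}$, $L^{q^\prime}$ spaces plus weak convergence of the subcritical and critical Nemytskii terms to obtain $\mc J_\la^{\prime}(u)=0$, and then the identity $\mc J_\la(u)=\mc J_\la(u)-\frac{1}{p^*_{s_1}}\langle \mc J_\la^{\prime}(u),u\rangle$ followed by dropping the $\ba$-term and applying H\"older and the Sobolev embedding. Your explicit minimization of $At^{p}-\la B t^{\de}$ is just the sharp form of the Young-inequality step used in the paper; it in fact produces the exponent $\frac{\de}{p-\de}$ on $\frac{p^*_{s_1}-\de}{p^*_{s_1}-p}$ rather than the $\frac{p\de}{p-\de}$ displayed in \eqref{eqb27}, which is harmless since that only makes the displayed constant larger and the stated lower bound follows a fortiori.
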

 \begin{proof}
 Since $u_k \rightharpoonup  u$ in $X_{p,s_1}$, there exists a subsequence (still denoted by $u_k$) $u_k  \ra u \text{ in } L^m(\Om),$  $1\leq m < p^*_{s_1}$ and $u_k(x) \ra u(x)$ a.e. in $\Om$.\\
 \textbf{Claim:} $\displaystyle \lim_{k \ra \infty}A_p(u_k , \phi) =A_p(u , \phi)$  and $\displaystyle \lim_{k \ra \infty}A_q(u_k , \phi) =A_q(u , \phi)$ for any $\phi \in X_{p,s_1}$.\\
 \noi The sequence $\left(\frac{|u_k(x)-u_k(y)|^{p-2}(u_k(x)-u_k(y))}{|x-y|^{\frac{n+ps_1}{p^{\prime}}}}\right)$ is bounded in  $L^{p^{\prime}}(\mathbb{R}^{2n})$, where $\frac{1}{p}+\frac{1}{p^{\prime}}=1$, thus upto subsequence, we have
\begin{align*}
   \left(\frac{|u_k(x)-u_k(y)|^{p-2}(u_k(x)-u_k(y))}{|x-y|^\frac{n+ps_1}{p^\prime}}\right){\rightharpoonup} \left(\frac{|u(x)-u(y)|^{p-2}(u(x)-u(y))}{|x-y|^\frac{n+ps_1}{p^\prime}}\right)
\end{align*}
 weakly in $ L^{p^{\prime}}(\mathbb{R}^{2n})$, which on using the fact that  $\left(\frac{\phi(x)-\phi(y)}{|x-y|^{\frac{n+ps_1}{p}}}\right)\in L^{p}(\mathbb{R}^{2n})$ gives us \\ $\ds\lim_{k \ra \infty}A_p(u_k , \phi) =A_p(u , \phi)$. Similarly, we can prove that $\ds \lim_{k \ra \infty}A_q(u_k , \phi) =A_q(u , \phi)$. This concludes the proof of the claim. By using the fact that $u_k\rightharpoonup u$ in $X_{p,s_1}$, we have
\begin{align*}
 & |u_k|^{\de-2}u_k {\rightharpoonup}|u|^{\de-2}u \quad  \text{ weakly in } L^{\de^{\prime}}(\Om) \text{ and }  \\
 & |u_k|^{p^*_{s_1}-2}u_k {\rightharpoonup}|u|^{p^*_{s_1}-2}u \text{ weakly in }	L^{(p_{s_1}^*)^\prime}(\Om).
\end{align*}
 It implies, for any $\phi \in L^\de(\Om)\;\cap\; L^{p^*_{s_1}}(\Om)$
   \begin{align*}
      &\int_{\Om}a(x)(|u_k(x)|^{\de-2}u_k(x)-|u(x)|^{\de-2}u(x))\phi(x)\;dx\ra 0,\\  &\int_{\Om}b(x)(|u_k(x)|^{p^*_{s_1}-2}u_k(x)-|u(x)|^{p^*_{s_1}-2}u(x))\phi(x)\;dx \ra 0. 
   \end{align*}

 \noi Resuming all the information collected so far, for any $\phi\in X_{p,s_1}$, we deduce that
 \begin{equation}
   \begin{aligned}
     \langle \mc {J}_\la^{\prime}(u_k)-\mc {J}_\la^{\prime}(u),\phi \rangle =& A_p(u_k,\phi)-A_p(u,\phi ) + \ba \left(A_q(u_k,\phi)-A_q( u,\phi) \right)\\ -&\int_{\Om}a(x)(|u_k(x)|^{\de-2}u_k(x)-|u(x)|^{\de-2}u(x))\phi(x)\;dx\\ -& \int_{\Om}b(x)(|u_k(x)|^{p^*_{s_1}-2}u_k(x)-|u(x)|^{p^*_{s_1}-2}u(x))\phi(x)\;dx \\ = \;&o_k(1). \nonumber
  \end{aligned}
  \end{equation}
  \noi This implies  $ \mc {J}_\la^{\prime}(u)=0$. In particular $\langle \mc {J}_\la^{\prime}(u),u\rangle=0$, which gives us
  \begin{equation}\label{eqb25}
   \begin{aligned}
     \mc {J}_{\la}(u) &= \left(\frac{1}{p}-\frac{1}{p^*_{s_1}}\right) \|u\|_{X_{p,s_1}}^{p} +\ba \left(\frac{1}{q}- \frac{1}{p^*_{s_1}}\right) \|u\|_{X_{q,s_2}}^{q}- \la\left(\frac{1}{\de}-\frac{1}{p^*_{s_1}}\right)\int_\Omega a(x)|u|^{\de}dx  \\ 
    & \geq \left(\frac{1}{p}-\frac{1}{p^*_{s_1}}\right)\|u\|_{X_{p,s_1}}^{p}-\la   \left(\frac{1}{\de}-\frac{1}{p^*_{s_1}}\right)\int_\Omega a(x)|u|^{\de}dx.
  \end{aligned}
  \end{equation}
  \noi By H\"older's inequality, Sobolev embeddings and Young inequality, we obtain
    \begin{equation}\label{eqb26}
    \begin{aligned}
     \la \int_\Omega a(x)|u|^{\de}dx &\leq \la \|a\|_{L^\infty(\Om)}S^{\frac{-\de}{p}} |\Om|^{\frac{p^*_{s_1}-\de}{p^*_{s_1}}} \|u\|_{X_{p,s_1}}^{\de}\\&= \left(\frac{p}{\de}\left(\frac{1}{p}-\frac{1}{p^*_{s_1}}\right) \left(\frac{1}{\de}-\frac{1}{p^*_{s_1}}\right)^{-1}\right)^\frac{\de}{p}\|u\|_{X_{p,s_1}}^{\de}\\&\qquad \quad \la \left(\frac{p}{\de}\left(\frac{1}{p}-\frac{1}{p^*_{s_1}}\right) \left(\frac{1}{\de}-\frac{1}{p^*_{s_1}}\right)^{-1}\right)^\frac{-\de}{p}\|a\|_{L^\infty(\Om)} \; |\Om|^{\frac{p^*_{s_1}-\de}{p^*_{s_1}}} S^{\frac{-\de}{p}} \\& \leq \left(\frac{1}{p}-\frac{1}{p^*_{s_1}}\right) \left(\frac{1}{\de}- \frac{1}{p^*_{s_1}}\right)^{-1}\|u\|_{X_{p,s_1}}^{p}+ A \la ^{\frac{p}{p-\de}},
  \end{aligned}
  \end{equation}
 \noi where $A=  \left(\frac{p-\de}{p}\right)\left(\frac{p^*_{s_1}-\de}{p^*_{s_1}-p}\right)^{\frac{p\de}{p-\de}}  S^{\frac{-\de}{p-\de}}\|a\|_{L^\infty(\Om)}^{\frac{p}{p-\de}} |\Om|^{\frac{p(p^*_{s_1}-\de)}{(p-\de)p^*_{s_1}}}.$
Therefore, result follows from \eqref{eqb25} and \eqref{eqb26} with $C_\de=   \left(\frac{1}{\de}-\frac{1}{p^*_{s_1}}\right)A.$ \QED
 \end{proof}

 \begin{Lemma}\label{lemm3}
  (Palais-Smale range)Let $r= p^*_{s_1}$ then $\mc {J}_\la$ satisfies the $(PS)_c$ condition with $c$ satisfying 
  \begin{equation*}
     -\infty <c< c_{\infty}:= \frac{s_1}{n} S^{\frac{n}{ps_1}}-C_\de \la^{\frac{p}{p-\de}},
   \end{equation*}
 where $C_\de$ is the positive constant defined in \eqref{eqb27}.
 \end{Lemma}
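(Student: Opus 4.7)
\textbf{Proof plan for Lemma \ref{lemm3}.} Let $\{u_k\}\subset X_{p,s_1}$ be a Palais--Smale sequence at level $c<c_\infty$.

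The first step is to establish boundedness of $\{u_k\}$ in $X_{p,s_1}$. Combining $\mc{J}_\la(u_k)-\frac{1}{p^*_{s_1}}\langle\mc{J}'_\la(u_k),u_k\rangle$ produces
\[
c+o_k(1)+o_k(1)\|u_k\|_{X_{p,s_1}}\ge \Big(\tfrac{1}{p}-\tfrac{1}{p^*_{s_1}}\Big)\|u_k\|_{X_{p,s_1}}^{p}-\la\Big(\tfrac{1}{\de}-\tfrac{1}{p^*_{s_1}}\Big)\|a\|_{L^{r/(r-\de)}(\Om)}S_r^{-\de/p}\|u_k\|_{X_{p,s_1}}^{\de},
\]
and since $\de<p$, coercivity gives boundedness. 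Extracting a subsequence, $u_k\rightharpoonup u$ weakly in $X_{p,s_1}$ and (by Lemma~\ref{cheeg} and compact embeddings) $u_k\to u$ strongly in $L^m(\Om)$ for every $m<p^*_{s_1}$ and a.e.\ in $\Om$. By Theorem~\ref{thb2}, $\mc{J}'_\la(u)=0$ and $\mc{J}_\la(u)\ge -C_\de\la^{p/(p-\de)}$.

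The heart of the argument is the Brezis--Lieb splitting applied to $w_k:=u_k-u$. The fractional Brezis--Lieb lemma yields
\[
\|u_k\|_{X_{p,s_1}}^{p}=\|u\|_{X_{p,s_1}}^{p}+\|w_k\|_{X_{p,s_1}}^{p}+o_k(1),\qquad
\|u_k\|_{X_{q,s_2}}^{q}=\|u\|_{X_{q,s_2}}^{q}+\|w_k\|_{X_{q,s_2}}^{q}+o_k(1),
\]
and the classical Brezis--Lieb gives the analogous decomposition of $\int_\Om |u_k|^{p^*_{s_1}}$, while the $\de$-term is compact. Subtracting $\langle\mc{J}'_\la(u),u\rangle=0$ from $\langle\mc{J}'_\la(u_k),u_k\rangle=o_k(1)$ and using these splittings (together with $b\equiv 1$) produces the identity
\[
\ell+\ba\, m = L,\qquad \text{where }\ell:=\lim\|w_k\|_{X_{p,s_1}}^{p},\ \ m:=\lim\|w_k\|_{X_{q,s_2}}^{q},\ \ L:=\lim\!\int_\Om|w_k|^{p^*_{s_1}}.
\]

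Applying the Sobolev inequality $\|w_k\|_{X_{p,s_1}}^{p}\ge S\|w_k\|_{L^{p^*_{s_1}}(\Om)}^{p}$ and passing to the limit gives $\ell\ge S\, L^{p/p^*_{s_1}}$. If $\ell>0$, then $L\ge \ell>0$ (since $L\ge \ell$), so $\ell\ge S\ell^{p/p^*_{s_1}}$, which rearranges (using $1-p/p^*_{s_1}=ps_1/n$) to $\ell\ge S^{n/(ps_1)}$. On the other hand, another application of Brezis--Lieb to $\mc{J}_\la(u_k)$ itself produces
\[
c=\mc{J}_\la(u)+\frac{\ell}{p}+\frac{\ba m}{q}-\frac{L}{p^*_{s_1}}
=\mc{J}_\la(u)+\frac{s_1}{n}\,\ell+\ba m\Big(\frac{1}{q}-\frac{1}{p^*_{s_1}}\Big)
\ge -C_\de\la^{\frac{p}{p-\de}}+\frac{s_1}{n}S^{\frac{n}{ps_1}}=c_\infty,
\]
using $q<p^*_{s_1}$ in the last inequality. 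This contradicts $c<c_\infty$, forcing $\ell=0$, i.e.\ $w_k\to 0$ in $X_{p,s_1}$, which is the desired strong convergence.

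The main technical obstacle is the accurate accounting of the two operator semi-norms together with the critical $L^{p^*_{s_1}}$ term: one must verify that the fractional Brezis--Lieb decomposition holds for both $X_{p,s_1}$ and $X_{q,s_2}$, that the $\de$-term drops out by the compact embedding into $L^\de(\Om)$, and that the extra nonnegative contribution $\ba m\bigl(\tfrac{1}{q}-\tfrac{1}{p^*_{s_1}}\bigr)$ coming from the $q$-semi-norm does not shrink the critical threshold. Once these pieces are put together, the chain $c\ge\mc{J}_\la(u)+\tfrac{s_1}{n}\ell\ge c_\infty$ closes the argument.
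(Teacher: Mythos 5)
Your argument is correct and follows essentially the same route as the paper: boundedness of the $(PS)_c$ sequence, weak convergence plus Theorem \ref{thb2} for $\mc J_\la'(u)=0$ and $\mc J_\la(u)\ge -C_\de\la^{p/(p-\de)}$, Brezis--Lieb decomposition of the two seminorms and the critical term, the Sobolev constant forcing the defect to be at least $S^{n/(ps_1)}$, and the energy identity yielding $c\ge c_\infty$, a contradiction. The only difference is bookkeeping: you keep $\ell$, $m$, $L$ separate and use $L=\ell+\ba m$ together with the nonnegativity of $\ba m\bigl(\tfrac{1}{q}-\tfrac{1}{p^*_{s_1}}\bigr)$, while the paper lumps $\ell+\ba m$ into a single limit $l$ and uses $\tfrac{1}{q}\ge\tfrac{1}{p}$ -- both give the same threshold.
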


\begin{proof}
\noi Let $\{u_k\}$ be a $(PS)_c$ sequence of $\mc{J}_\la$ in $X_{p,s_1}$. Therefore 
\begin{equation}\label{eqb28}
\begin{aligned}
   & \frac{1}{p}\|u_k\|^{p}_{X_{p,s_1}}+ \frac{\ba}{q}\|u_k\|^{q}_{X_{q,s_2}} -\frac{\la}{\de}\int_{\Om}a(x)|u_k|^{\de} dx -\frac{1}{p^*_{s_1}}\int_{\Om}  |u_k|^{p^*_{s_1}}dx=c+o_k(1)\\&
   \text{and  }\quad \|u_k\|^{p}_{X_{p,s_1}}+ \ba \|u_k\|^{q}_{X_{q,s_2}} -\la\int_{\Om}a(x)|u_k|^{\de} dx -\int_{\Om} |u_k|^{p^*_{s_1}} dx=o_k(1).
 \end{aligned}
\end{equation}
\noi Since $\{u_k\}$ is bounded in $X_{p,s_1}$, it implies there exists $u\in X_{p,s_1}$ such that up to subsequence $u_k \rightharpoonup u$ weakly in $X_{p,s_1}$ and $u$ is a critical point of $\mc{J}_\la$.\\
\textbf{Claim:} $u_k \ra u$ strongly in $X_{p,s_1}$.\\
Since $u_k \ra u$ strongly in $L^{\ga}(\Om)$ for $1\leq \ga < p^{*}_{s_1},$ it implies $$\ds\int_{\Om}a(x)|u_k|^{\de}dx \ra \ds\int_{\Om}a(x)|u|^{\de}dx \quad\text{ as } k\ra\infty.$$
And by Brezis-Lieb Lemma, we have
\begin{equation}\label{eqb29}
\begin{aligned}
 \|u_k\|^{p}_{X_{p,s_1}}=& \|u_k-u\|^{p}_{X_{p,s_1}}+\|u\|^{p}_{X_{p,s_1}}+ o_k(1),\quad \|u_k\|^{q}_{X_{q,s_2}}=\|u_k-u\|^{q}_{X_{q,s_2}}+\|u\|^{q}_{X_{q,s_2}}+ o_k(1),  \\&   \text { and }\int_{\Om}|u_k|^{p^*_{s_1}}= \int_{\Om}|u_k-u|^{p^*_{s_1}}+\int_{\Om}|u|^{p^*_{s_1}} + o_k(1).
\end{aligned}
\end{equation}
Coupling \eqref{eqb29} with \eqref{eqb28}, we obtain
\begin{equation}\label{eqb48}
\begin{aligned}
   & \frac{1}{p}\|u_k-u\|^{p}_{X_{p,s_1}}+ \frac{\ba}{q}\|u_k-u\|^{q}_{X_{q,s_2}}-\frac{1}{p^*_{s_1}}\int_{\Om}|u_k-u|^{p^*_{s_1}}=c-\mc {J}_\la(u)+ o_k(1)\\ \text { and }\qquad & \|u_k-u\|^{p}_{X_{p,s_1}}+ \ba \|u_k-u\|^{q}_{X_{q,s_2}}-\int_{\Om}|u_k-u|^{p^*_{s_1}}= o_k(1).
 \end{aligned}
\end{equation}
\noi Hence, let $\|u_k-u\|^{p}_{X_{p,s_1}}+\ba \|u_k-u\|^{q}_{X_{q,s_2}} \ra l$ and  $\ds\int_{\Om}|u_k-u|^{p^*_{s_1}}\ra l \quad \text{ as } k \ra \infty $.
If $l=0$, then claim is proved, so we assume $l>0$, then 
\begin{align*}
   l^{\frac{p}{p^*_{s_1}}}& =\left(\lim_{k \ra \infty}\int_{\Om}|u_k-u|^{p^*_{s_1}}dx \right)^{\frac{p}{p^*_{s_1}}} \\&
   \leq \lim_{k \ra \infty}\left( S^{-1}\|u_k-u\|_{X_{p,s_1}}^{p}\right)\\& \leq  S^{-1}\lim_{k \ra \infty}\left(\|u_k-u\|^{p}_{X_{p,s_1}}+ \ba \|u_k-u\|^{q}_{X_{q,s_2}} \right)=S^{-1} l.
\end{align*}
This implies $l \geq S^{\frac{n}{ps_1}}$.
\noi Now from \eqref{eqb48}, we deduce that
\begin{align*}
  c-\mc {J}_\la(u)\geq \frac{1}{p}\left(\|u_k-u\|^{p}_{X_{p,s_1}}+ \ba \|u_k-u\|^{q}_{X_{q,s_2}}\right)-\frac{1}{p^*_{s_1}}\int_{\Om}b(x)|u_k-u|^{p^*_{s_1}}+o_k(1)= \frac{s_1 \; l}{n}
\end{align*}
that is, $c\geq \frac{s_1l}{n}+\mc {J}_\la(u)\geq\frac{s_1}{n} S^{\frac{n}{ps_1}}-C_0 \la^{\frac{p}{p-\de}},$  hence we get a contradiction to $c<c_\infty$.
\end{proof}\QED
\textbf{Proof of Theorem \ref{pqthm2}} \textit{(i)}:
 Let $\ga_0>0$ be such that for all $\la\in(0,\ga_0)$ \begin{equation}\label{eqbp}
c_\infty =\frac{s_1}{n}S^{\frac{n}{ps_1}}-C_\de \la^{\frac{p}{p-\de}} >0\quad \mbox{and } \La_0 =\min \{ \ga_0, \la_0\}.
\end{equation}
 Using proposition \ref{propb2}, there exists a minimizing sequence $\{u_k\}$ in $\mc{N}_\la$ which is also a $(PS)_{\theta_\la}$ sequence for $\mc J_\la$. Employing Lemmas \ref{L35} and \ref{lemm3}, there exists $u_\la\in X_{p,s_1}$ such that $u_k\rightarrow u_\la$ strongly in $X_{p,s_1}$ for $\la\in(0, \La_0)$. Therefore for $\la\in(0, \La_0)$,\;$u_\la$ is a weak solution of problem $(\mc P_\la)$. 
As a consequence
\begin{align*}
  \mc J_\la(u_k)&= \left(\frac{1}{p}-\frac{1}{p^*_{s_1}}\right)\|u_k\|_{X_{p,s_1}}^{p} +\ba \left(\frac{1}{q}-\frac{1}{p^*_{s_1}}\right)\|u_k\|_{X_{q,s_2}}^{q}-\la\frac{p^*_{s_1}-\de}{\de p_{s_1}^*}\int_\Omega a(x)|u_k|^{\de}dx\\
  &\ge -\la\left(\frac{1}{\de}-\frac{1}{p^*_{s_1}}\right)\int_\Omega a(x)|u_k|^{\de}dx,	
\end{align*}
which gives us $$\ds\int_\Om a(x)|u_\la|^qdx \ge - \frac{p^*_{s_1}\de}{(p^*_{s_1}-\de)\la} \theta_{\la}>0.$$
Hence, $u_\la\not\equiv 0$. Thus, $u_\la\in \mc {N}_\la$ and $\mc {J}_\la(u_\la)=\theta_\la.$
Next we prove that $u_\la\in\mc N_\la^+$. On the contrary, let $u_\la\in\mc N_\la^-$, then by fibering map analysis there exist $t_1<t_2=1$ such that $t_1u_\la\in\mc N_\la^+$ and $t_2u_\la\in\mc N_\la^-$.
Since $\psi_{u_\la}$ is increasing in $[t_1,t_2)$, it implies
$$\theta_\la\leq \mc J_\la(t_1u_\la)<\mc J_\la(tu_\la)\le\mc J_\la(u_\la)=\theta_\la$$
for $t\in(t_1,1)$, which is a contradiction. Hence $u_\la\in\mc N_\la^+$ and $\theta_\la=\mc J_\la(u_\la)=\theta_\la^+$. Moreover, by using same assertions and arguments as in proof of Theorem \ref{pqthm1}, we obtain that  $u_\la$ is nonnegative solution.\QED
To prove  Theorem \ref{pqthm2} (ii), we will show   the existence of second solution below the first critical level by using the blowup analysis. In order to achieve this. we use the asymptotic estimate on the minimizers of $S$, which were proved by  Brasco et al. \cite{brasco}. Precisely, they  proved that $S$ has a minimizer and for every minimizer $U$, there exist $x_0\in\mb R^n$ and a constant sign monotone function $u:\mb R\ra \mb R$ such that $U(x)=u(|x-x_0|)$. We fix a radially symmetric, non-negative and decreasing minimizer $U(x)=U(|x|)$ of $S$. 
\begin{Lemma} 
	There exist $c_1, c_2>0$ and $\theta>1$ such that for all $r>1$, we have \begin{align*}
		\frac{c_1}{r^\frac{n-ps_1}{p-1}}\le U(r)\le \frac{c_2}{r^\frac{n-ps_1}{p-1}}, \quad \frac{U(\theta r)}{U(r)}\leq \frac{1}{2}.
	\end{align*}
\end{Lemma}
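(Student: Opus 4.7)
The plan is to invoke the sharp asymptotic estimates for extremals of the fractional Sobolev inequality as developed by Brasco--Mosconi--Squassina. After normalisation, the radial decreasing minimiser $U$ satisfies the Euler--Lagrange equation
\begin{equation*}
(-\De)_p^{s_1} U = U^{p_{s_1}^*-1} \quad \text{in } \mb R^n,
\end{equation*}
and the natural decay rate is $r^{-(n-ps_1)/(p-1)}$, which is the rate of the fundamental solution of $(-\De)_p^{s_1}$. The strategy is to prove two-sided bounds of this form for $r>1$; the ratio estimate is then deduced by choosing $\theta$ sufficiently large.

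First I would establish the upper bound. Since $U\in L^{p_{s_1}^*}(\mb R^n)$ and solves the critical equation, a Moser iteration (in the spirit of the proof of Theorem \ref{linfty} but applied on the whole space) gives $U\in L^\infty(\mb R^n)$. To convert the $L^\infty$ bound into decay, I would localise on annuli of radius $\sim r$ and apply the Caccioppoli--tail estimates of Di Castro--Kuusi--Palatucci to $(-\De)_p^{s_1}U=U^{p_{s_1}^*-1}$; combining these with the radial monotonicity of $U$ and the fact that $\|U\|_{L^{p_{s_1}^*}(B_r^c)}\to 0$ as $r\to\infty$ yields the pointwise estimate $U(r)\le c_2 r^{-(n-ps_1)/(p-1)}$ for $r>1$.

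The lower bound is the main difficulty and is the heart of the Brasco--Mosconi--Squassina analysis. The approach is to exhibit a global radial subsolution of the required decay and use comparison. A delicate computation shows that the function $\Psi(x)=\mu(1+|x|)^{-(n-ps_1)/(p-1)}$ satisfies $(-\De)_p^{s_1}\Psi\le 0$ in the complement of a large ball, provided $\mu$ is chosen small enough; this is the step that fails to reduce to a closed-form algebraic identity, unlike the local case. Since $U$ is continuous and strictly positive, one can choose $r_0$ large and $\mu$ small so that $U\ge\Psi$ on $\{|x|\le r_0\}\cup\{|x|\to\infty\}$, and the comparison principle (Proposition \ref{comp} adapted to $\ba=0$) then forces $U\ge\Psi$ everywhere, giving $U(r)\ge c_1 r^{-(n-ps_1)/(p-1)}$ for $r>1$.

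Combining the two inequalities yields, for every $r>1$ and $\theta>1$,
\begin{equation*}
\frac{U(\theta r)}{U(r)} \le \frac{c_2}{c_1}\,\theta^{-(n-ps_1)/(p-1)},
\end{equation*}
so taking $\theta>(2c_2/c_1)^{(p-1)/(n-ps_1)}$ gives $U(\theta r)/U(r)\le 1/2$, which completes the proof. The principal obstacle is the construction and verification of the decaying subsolution $\Psi$: since $(-\De)_p^{s_1}$ does not act on powers of $|x|$ as a simple scaling, establishing the sign of $(-\De)_p^{s_1}\Psi$ at infinity requires the precise asymptotic expansion carried out in the cited works rather than a direct computation.
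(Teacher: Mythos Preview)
The paper does not supply its own proof of this lemma. It is stated immediately after the passage attributing the asymptotic estimates on minimizers of $S$ to Brasco--Mosconi--Squassina \cite{brasco}, and is used throughout Section~4 as a black-box import from that reference. Your outline is a faithful summary of the strategy actually carried out in \cite{brasco}: the upper bound via an $L^\infty$ estimate together with Caccioppoli/tail control, the lower bound via construction of a decaying barrier subsolution and the nonlocal comparison principle, and the ratio estimate $U(\theta r)/U(r)\le 1/2$ as an immediate consequence of the two-sided bounds by choosing $\theta$ large. Since the paper itself offers no argument to compare against, there is nothing further to contrast; your proposal is appropriate as a pointer to the literature proof, with the honest caveat you already flag that the subsolution verification is the nontrivial step and is not reproducible by elementary means.
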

Multiplying by a positive constant, if necessary, we may assume 
\begin{equation}\label{eqb37}
	(-\De)_{p}^{s_1}U =U^{p^{*}_{s_1}-1}.
\end{equation}
We note that for any $\e >0$, $U_\e(x)= \e^{-\frac{n-ps_1}{p}} U\big(\frac{|x|}{\e}\big)$ is also a minimizer for $S$ satisfying \eqref{eqb37}. Without loss of generality assume $0\in\Om$ and as in \cite{brasco}, for $\e,\kappa >0$, we define the following functions\\
$m_{\e,\kappa}= \frac{U_\e(\kappa)}{U_\e(\kappa)-U_\e(\theta\kappa)},$  $g_{\e,\kappa}: [0,\infty)\to \mb R$ as $$ g_{\e,\kappa}(t)= \left\{
\begin{array}{lr}
0 \qquad\qquad\qquad\qquad\mbox{if} \ 0\le t\leq U_\e(\theta\kappa),\\
m_{\e,\kappa}^{p}(t-U_\e(\theta\kappa)) \  \quad\mbox{if} \ U_\e(\theta\kappa)\le t\leq U_\e(\kappa),\\
t+U_\e(m_{\e,\kappa}^{p-1}-1) \quad \mbox{if} \ t\ge U_\e(\kappa),
\end{array}
\right.$$
and $G_{\e,\kappa}: [0,\infty)\to \mb R$ by $$G_{\e,\kappa} =\int_0^t (g_{\e,\kappa}^\prime(s))^{\frac{1}{p}}ds =\left\{
\begin{array}{lr}
0 \qquad\qquad\qquad\qquad\mbox{if} \ 0\le t\leq U_\e(\theta\kappa),\\
m_{\e,\kappa}(t-U_\e(\theta\kappa)) \  \quad\mbox{if} \ U_\e(\theta\kappa)\le t\leq U_\e(\kappa),\\
t+U_\e(m_{\e,\kappa}^{p-1}-1) \quad \mbox{if} \ t\ge U_\e(\kappa).
\end{array} 
\right. $$
Define $u_{\e,\kappa}: [0,\infty)\to \mb R$ as   $u_{\e,\kappa}(r)= G_{\e,\kappa}(U_\e(r))$, a radially symmetric and non-increasing function, which satisfies 
\begin{equation}\label{auxfn}
u_{\e,\kappa}(r)= \left\{
\begin{array}{lr}
U_\e(r)\quad\mbox{if}\quad r\leq \kappa,\\
0\quad\quad\quad\mbox{if}\quad r\ge\theta\kappa.\\
\end{array}
\right.
\end{equation} 
 We will now state a Lemma which gives bounds on $\|u_{\e,\kappa}\|_{X_{p,s_1}}$ and $\|u_{\e,\kappa}\|_{L^{p^*_{s_1}}(\mathbb{R}^n)}$. 
\begin{Lemma}\label{lemm1}
\cite{mosconi} For every $\kappa$ and $0<\e\le\frac{\kappa}{2}$, there exists a constant $C= C(n,p,s_1)>0$  such that 
	\begin{align*}
	& \ds\int_{\mb R^{2n}}\frac{|u_{\e,\kappa}(x)-u_{\e,\kappa}(y)|^{p}}{|x-y|^{n+ps_1}}dxdy \leq S^{\frac{n}{ps_1}}+O\left(\Big(\frac{\e}{\kappa}\Big)^{\frac{n-ps_1}{p-1}}\right) \text{ and } \\&
    \ds\int_{\mb R^n}|u_{\e,\kappa}(x)|^{p^*_{s_1}}\geq   S^{\frac{n}{ps_1}}-C\Big(\frac{\e}{\kappa}\Big)^{\frac{n}{p-1}}.
	\end{align*}
\end{Lemma}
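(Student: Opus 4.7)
\textbf{Proof plan for Lemma \ref{lemm1}.}

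The plan is to follow the strategy of Mosconi--Perera--Squassina--Yang. The central technical tool is a pointwise Hölder-type estimate for the truncation: since $G_{\e,\kappa}(t)=\int_0^t(g_{\e,\kappa}'(s))^{1/p}ds$, Hölder's inequality yields, for all $a\ge b\ge 0$,
\[
\bigl(G_{\e,\kappa}(a)-G_{\e,\kappa}(b)\bigr)^{p}\le(a-b)^{p-1}\bigl(g_{\e,\kappa}(a)-g_{\e,\kappa}(b)\bigr).
\]
Because $u_{\e,\kappa}=G_{\e,\kappa}\circ U_\e$ and $U_\e$ is non-negative with $G_{\e,\kappa}$ non-decreasing, applying this inequality under the Gagliardo integral (and using symmetry of the kernel) will give
\[
[u_{\e,\kappa}]_{X_{p,s_1}}^{p}\le\int_{\mathbb R^{2n}}\frac{|U_\e(x)-U_\e(y)|^{p-2}(U_\e(x)-U_\e(y))\bigl(g_{\e,\kappa}(U_\e(x))-g_{\e,\kappa}(U_\e(y))\bigr)}{|x-y|^{n+ps_1}}\,dxdy.
\]

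For the $L^{p^*_{s_1}}$ lower bound I would proceed first, as it is independent. Since $u_{\e,\kappa}=U_\e$ on $B_\kappa$ by \eqref{auxfn},
\[
\int_{\mathbb R^n}|u_{\e,\kappa}|^{p^*_{s_1}}\ge\int_{B_\kappa}U_\e^{p^*_{s_1}}=\int_{\mathbb R^n}U_\e^{p^*_{s_1}}-\int_{B_\kappa^c}U_\e^{p^*_{s_1}}.
\]
Testing \eqref{eqb37} against $U$ gives $\int U^{p^*_{s_1}}=[U]_{X_{p,s_1}}^p$, and combining with $[U]_{X_{p,s_1}}^{p}=S\|U\|_{L^{p^*_{s_1}}}^{p}$ yields $\int U^{p^*_{s_1}}=S^{n/(ps_1)}$; by scale invariance the same holds for $U_\e$. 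The tail is controlled by changing variables $x=\e y$ and using $U(r)\le c_2\,r^{-(n-ps_1)/(p-1)}$ for $r>1$, giving
\[
\int_{|x|>\kappa}U_\e^{p^*_{s_1}}dx=\int_{|y|>\kappa/\e}U^{p^*_{s_1}}dy\le C\int_{\kappa/\e}^{\infty}r^{-\frac{(n-ps_1)p^*_{s_1}}{p-1}+n-1}dr=C(\e/\kappa)^{n/(p-1)},
\]
since $(n-ps_1)p^*_{s_1}/(p-1)-n=n/(p-1)$.

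For the Gagliardo estimate, I would next use the test function $g_{\e,\kappa}(U_\e)$, which is bounded and supported in $B_{\theta\kappa}$ (since $g_{\e,\kappa}=0$ on $[0,U_\e(\theta\kappa)]$), hence lies in $X_{p,s_1}$. By the equation \eqref{eqb37} satisfied by $U_\e$ in the weak sense,
\[
[u_{\e,\kappa}]_{X_{p,s_1}}^{p}\le\int_{\mathbb R^n}U_\e^{p^*_{s_1}-1}\,g_{\e,\kappa}(U_\e)\,dx.
\]
Using the Brasco-type decay $U(\theta r)/U(r)\le 1/2$ one gets $m_{\e,\kappa}\le 2$, so that $g_{\e,\kappa}(t)\le t+C\,U_\e(\kappa)$ for all $t\ge 0$. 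Splitting the integral according to whether $|x|\le\kappa$, $\kappa\le|x|\le\theta\kappa$, or $|x|\ge\theta\kappa$ (the latter contributing nothing), the leading term is $\int_{B_\kappa}U_\e^{p^*_{s_1}}\le S^{n/(ps_1)}$, and the correction reduces, after rescaling $x=\e y$, to
\[
C\,U_\e(\kappa)\int_{B_\kappa}U_\e^{p^*_{s_1}-1}dx\;\lesssim\;C\,\bigl(\e/\kappa\bigr)^{(n-ps_1)/(p-1)}
\]
by the pointwise decay bounds on $U$ and a direct integration, with the intermediate annulus handled similarly.

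The main obstacle is the careful bookkeeping of scaling exponents in the correction terms: verifying that the combination of the Hölder-type pointwise inequality, the decay of $U$, and the chosen test function $g_{\e,\kappa}(U_\e)$ yields precisely the exponent $(n-ps_1)/(p-1)$ for the Gagliardo error (and not some weaker exponent that would be insufficient for the subsequent blow-up analysis). The freedom to choose the parameter $\theta>1$ from the decay lemma is what permits the uniform bound $m_{\e,\kappa}\le 2$ and hence Lipschitz-type control on $g_{\e,\kappa}$, which is essential for the annular region estimate.
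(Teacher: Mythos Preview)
The paper does not prove this lemma at all: it is simply quoted from \cite{mosconi} with no argument supplied. Your proposal correctly reconstructs the proof given there --- the H\"older-type inequality $(G_{\e,\kappa}(a)-G_{\e,\kappa}(b))^{p}\le(a-b)^{p-1}(g_{\e,\kappa}(a)-g_{\e,\kappa}(b))$, testing \eqref{eqb37} against $g_{\e,\kappa}(U_\e)$, the bound $m_{\e,\kappa}\le 2$ from the decay ratio $U(\theta r)/U(r)\le 1/2$, and the tail estimate via the pointwise bound $U(r)\le c_2 r^{-(n-ps_1)/(p-1)}$ --- so there is nothing to compare beyond noting that your sketch is faithful to the original source.
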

\begin{Lemma}\label{lemm2}
	There exists $\La_{00}$ such that for every $\la\in(0,\La_{00})$, there exists $u\geq 0$ in $X_{p,s_1}$ such that $$ \ds\sup_{t\geq 0} \mc{J}_\la(tu) <c_\infty .$$
	In particular $\theta_{\la}^{-} <c_\infty $.
\end{Lemma}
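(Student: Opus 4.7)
The plan is to exhibit a nonnegative test function $u\in X_{p,s_1}$ along which the path $t\mapsto \mc J_\la(tu)$ stays strictly below $c_\infty$; the fibering analysis then immediately gives $\theta_\la^-<c_\infty$. For the test function I would take $u=u_\e:=u_{\e,\kappa}$ from the family defined in \eqref{auxfn}, after first fixing $\kappa>0$ small enough so that $B_{\theta\kappa}(0)\Subset B_{\kappa_1}(0)\cap\Om$. With this choice $u_\e$ is nonnegative, radial, and supported in $B_{\theta\kappa}(0)$ for every $0<\e<\kappa/2$; in particular $a(x)\ge m_a>0$ on its support. Splitting
\[
\mc J_\la(tu_\e)=\phi(t)+\frac{\ba t^q}{q}\|u_\e\|_{X_{q,s_2}}^{q}-\frac{\la t^\de}{\de}\int_\Om a(x)\,u_\e^\de\,dx,
\]
with $\phi(t):=\frac{t^p}{p}\|u_\e\|_{X_{p,s_1}}^{p}-\frac{t^{p^{*}_{s_1}}}{p^{*}_{s_1}}\|u_\e\|_{L^{p^{*}_{s_1}}(\Om)}^{p^{*}_{s_1}}$, a direct computation gives $\sup_{t\ge 0}\phi(t)=\tfrac{s_1}{n}\bigl(\|u_\e\|_{X_{p,s_1}}^{p}/\|u_\e\|_{L^{p^{*}_{s_1}}(\Om)}^{p}\bigr)^{n/(ps_1)}$, which by Lemma \ref{lemm1} and an elementary Bernoulli-type expansion is bounded by $\tfrac{s_1}{n}S^{n/(ps_1)}+C_1(\e/\kappa)^{(n-ps_1)/(p-1)}$.

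Since $\mc J_\la(tu_\e)\to-\infty$ as $t\to\infty$ and $\mc J_\la(0)=0$, the supremum of $t\mapsto\mc J_\la(tu_\e)$ is attained at some $t_\la\ge 0$, which by the fibering analysis (Case 1, applicable since $\int a\,u_\e^\de>0$ and $\int u_\e^{p^{*}_{s_1}}>0$) coincides with the unique $t_2>0$ satisfying $t_2u_\e\in\mc N_\la^-$. Using Lemma \ref{cheeg} to control $\|u_\e\|_{X_{q,s_2}}$ by $\|u_\e\|_{X_{p,s_1}}$, and noting that $\|u_\e\|_{X_{p,s_1}}^{p}$ and $\|u_\e\|_{L^{p^{*}_{s_1}}(\Om)}^{p^{*}_{s_1}}$ remain in a compact subset of $(0,\infty)$ as $\e\to 0$, the maximiser $t_\la$ lies in an interval $[\tau_1,\tau_2]\subset(0,\infty)$ uniformly in small $\la,\ba,\e$. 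Hence
\[
\sup_{t\ge 0}\mc J_\la(tu_\e)\le \frac{s_1}{n}S^{n/(ps_1)}+C_1\Big(\frac{\e}{\kappa}\Big)^{(n-ps_1)/(p-1)}+C_2\ba-c_3\la\int_\Om u_\e^\de\,dx.
\]
For the last integral, $u_\e=U_\e$ on $B_\kappa$ combined with the decay $U(r)\sim r^{-(n-ps_1)/(p-1)}$ yields, via the change of variables $y=x/\e$, the lower bound $\int_\Om u_\e^\de\,dx\ge c\,\e^{\,n-\de(n-ps_1)/p}$ (with an additional $|\log\e|$ factor in the borderline case $\de=n(p-1)/(n-ps_1)$, which only helps); the hypothesis $\de\ge n(p-1)/(n-ps_1)$ forces the exponent to be $\le n/p$.

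Finally, I would pick $\e=\la^\sigma$ with $\sigma>0$ chosen in the (nonempty) range for which both $(\e/\kappa)^{(n-ps_1)/(p-1)}\ll \la\,\e^{n-\de(n-ps_1)/p}$ and $\la^{p/(p-\de)}\ll \la\,\e^{n-\de(n-ps_1)/p}$ as $\la\to 0^+$, and then take $\ba_{00}=\ba_{00}(\la)>0$ small enough that $C_2\ba<\tfrac12 c_3\la\int u_\e^\de\,dx$, together with $\La_{00}\le\La_0$ small enough that $c_\infty>0$ and the above asymptotics hold. With these choices, for every $\la\in(0,\La_{00})$ and $\ba\in(0,\ba_{00})$,
\[
\theta_\la^-\le\mc J_\la(t_2u_\e)\le\sup_{t\ge 0}\mc J_\la(tu_\e)<\frac{s_1}{n}S^{n/(ps_1)}-C_\de\la^{p/(p-\de)}=c_\infty,
\]
proving the lemma. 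The principal obstacle is this coupled optimisation over $\e,\la,\ba$: unlike in the single-operator critical problem, the positive contribution $\tfrac{\ba t^q}{q}\|u_\e\|_{X_{q,s_2}}^{q}$ raises the energy and must be absorbed into the negative $\la$-term, which is precisely why $\ba_{00}$ has to depend on $\la$; meanwhile, the integrability assumption $\de\ge n(p-1)/(n-ps_1)$ is exactly what is needed to make the $L^\de$-gain $\la\,\e^{n-\de(n-ps_1)/p}$ dominate the intrinsic error $(\e/\kappa)^{(n-ps_1)/(p-1)}$ produced by Lemma \ref{lemm1}.
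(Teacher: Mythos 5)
Your proposal is correct and follows essentially the same route as the paper: the same truncated minimizers $u_{\e,\kappa}$, the estimates of Lemma \ref{lemm1}, the splitting of $\mc J_\la(tu_{\e,\kappa})$ into the pure critical part plus the $\ba$-term minus the $\la a(x)$-term, and the coupling $\e\sim\la^\sigma$ with $\ba$ small, differing only organizationally (a free exponent $\sigma$ rather than the paper's choice $\e^{(n-ps_1)/(p-1)}=\la^{p/(p-\de)}$, and uniform bounds on the fibering maximizer in place of the paper's split into $t\le t_0$ and $t\ge t_0$). The one point to treat with care is the borderline case $\de=\frac{n(p-1)}{n-ps_1}$, where the admissible range of $\sigma$ you describe is actually empty and the conclusion must instead be extracted from the logarithmic factor $|\log\e|$, exactly as in the paper's Case 2.
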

\begin{proof} Let $\La_0>0$ be as defined in \eqref{eqbp} so that for all $\la\in(0,\La_0)$, $c_\infty>0$ holds. Then,
	\begin{align*}
       \mc{J}_\la(tu_{\e,\kappa})&\leq \frac{t^{p}}{p}\|u_{\e,\kappa}\|_{X_{p,s_1}}^{p}+ \ba \frac{t^{q}}{q}\|u_{\e,\kappa}\|_{X_{q,s_2}}^{q} \\ &\leq 
       \frac{t^{p}}{p}\|u_{\e,\kappa}\|_{X_{p,s_1}}^{p}+C\ba \frac{t^{q}}{q}\|u_{\e,\kappa}\|_{X_{p,s_1}}^{p}  \leq
        C(t^{p}+t^{q}).
	\end{align*}
 Therefore, there exists $t_0\in(0,1)$ such that $$\ds\sup_{0\leq t\leq t_0}\mc{J}_\la(tu_{\e,\kappa}) < c_\infty.$$
 Let $h(t)= \frac{t^{p}}{p}\|u_{\e,\kappa}\|_{X_{p,s_1}}^{p} +\ba \frac{t^{q}}{q}\|u_{\e,\kappa}\|_{X_{q,s_2}}^{q}-\frac{t^{p^*_{s_1}}}{p^*_{s_1}}\ds\int_{\Om}|u_{\e,\kappa}|^{p^*_{s_1}},$ then
 we note that $h(0)=0$, $h(t)>0$ for $t$ small enough, $h(t)<0$ for $t$ large enough,
 and there exists $t_\e >0$ such that $\ds\sup_{t>0}h(t)=h(t_\e),$ that is 
   $$0=h^\prime(t_\e)=t_\e^{p-1}\|u_{\e,\kappa}\|_{X_{p,s_1}}^{p} +\ba t_\e^{q-1}\|u_{\e,\kappa}\|_{X_{q,s_2}}^{q}-t_\e^{p^*_{s_1}-1}\ds\int_{\Om}|u_{\e,\kappa}|^{p^*_{s_1}}$$
	which gives us
	\begin{align*}
	   t_\e ^{p^*_{s_1}-q}&=\frac{1}{\|u_{\e,\kappa}\|_{p^*_{s_1}}^{p^*_{s_1}}} \big(t_\e^{p-q}\|u_{\e,\kappa}\|_{X_{p,s_1}}^{p}+ \ba\|u_{\e,\kappa}\|_{X_{q,s_2}}^{q}\big)
	   < C(1+t_\e^{p-q}).
	\end{align*}
	Since $p^*_{s_1}> p$, there exists $t_1> 0$ such that $t_\e <t_1$ for all  $\e >0$.
	Also, for $\kappa>0$ such that $\theta \kappa<\kappa_1$, we have
	\begin{align*}
       \ds\int_\Om a(x)|u_{\e,\kappa}|^\de dx &=\ds\int_{B_{\theta\kappa}(0)}a(x)|u_{\e,\kappa}|^\de dx
	   \ge m_a\ds\int_{B_{\theta\kappa}(0)}|u_{\e,\kappa}|^\de dx 
	   \ge m_a\ds\int_{B_{\kappa}(0)}|U_{\e}|^\de dx.
		\end{align*}
	Therefore  we  obtain
	\begin{equation}\label{eqb31}
		\begin{aligned}
	      \ds\sup_{t\ge t_0}\mc{J_\la}(tu_{\e,\kappa})&\le \ds\sup_{t>0}h(t)-\frac{t_0^qm_a}{\de}\la\ds\int_{B_\kappa(0)}|U_\e|^\de\\
	      &=\frac{t_\e^{p}}{p}\|u_{\e,\kappa}\|_{X_{p,s_1}}^{p}+ \ba \frac{t_\e^{q}}{q}\|u_{\e,\kappa}\|_{X_{q,s_2}}^{q} -\frac{t_\e^{p^*_{s_1}}}{p^*_{s_1}}\|u_{\e,\kappa}\|_{L^{p^*_{s_1}}(\Om)}^{{p^*_{s_1}}}- \frac{t_0^{\de}}{\de}m_a\la\ds\int_{B_\kappa(0)}|U_\e|^\de\\
        &\leq \ds\sup_{t\ge 0}\left(\frac{t^{p}}{p}\|u_{\e,\kappa}\|_{X_{p,s_1}}^{p}- \frac{t^{p^*_{s_1}}}{p^*_{s_1}}\|u_{\e,\kappa}\|_{L^{p^*_{s_1}}(\Om)}^{p^*_{s_1}}\right)\\
        & \qquad \qquad +\ba \frac{t_1^{q}}{q}\|u_{\e,\kappa}\|_{X_{q,s_2}}^{q}  -\frac{t_0^{\de}}{\de}m_a\la\ds\int_{B_\kappa(0)}|U_\e|^\de.
	\end{aligned}
	\end{equation}
	Let $g(t)=\frac{t^{p}}{p}\|u_{\e,\kappa}\|_{X_{p,s_1}}^{p}- \frac{t^{p^*_{s_1}}}{p^*_{s_1}}\|u_{\e,\kappa}\|_{L^{p^*_{s_1}}(\Om)}^{p^*_{s_1}}$. A simple computation shows that $g$ attains maximum at $\tilde{t}=\left(\frac{\|u_{\e,\kappa}\|_{X_{p,s_1}}^{p}}{\|u_{\e,\kappa}\|_{L^{p^*_{s_1}}(\Om)}^{p^*_{s_1}}}\right)^\frac{1}{p^*_{s_1}-p}$. 
   Therefore, \begin{align*}
	    \ds\sup_{t\ge 0}g(t)= g(\tilde{t})= \frac{s_1}{n}	\left(\frac{\|u_{\e,\kappa}\|_{X_{p,s_1}}^{p}}{\|u_{\e,\kappa}\|_{L^{p^*_{s_1}}(\Om)}^{p^*_{s_1}}}\right)^\frac{n}{ps_1},
	 \end{align*}
	  using the estimates of Lemma \ref{lemm1}, we have
	\begin{align*}
	    \ds\sup_{t\ge 0}g(t)&\leq \frac{s_1}{n}\left(\frac{S^{\frac{n}{ps_1}}+O((\frac{\e}{\kappa})^{\frac{n-ps_1}{p-1}}) }{S^{\frac{n}{ps_1}}-C(\frac{\e}{\kappa})^{\frac{n}{p-1}}}\right)^{\frac{n}{ps_1}} \leq \frac{s_1}{n}S^{\frac{n}{ps_1}}+O\left(\Big(\frac{\e}{\kappa}\Big)^{\frac{n-ps_1}{p-1}}\right)
	\end{align*}
	hence, from \eqref{eqb31}, we deduce that
	\begin{align*}
	     \ds\sup_{t\ge t_0}\mc{J}_\la(tu_{\e,\kappa})\leq \frac{s_1}{n}S^{\frac{n}{ps_1}}+O\left(\Big(\frac{\e}{\kappa}\Big)^{\frac{n-ps_1}{p-1}}\right)+\ba \frac{t_1^{q}}{q}\|u_{\e,\kappa}\|_{X_{q,s_2}}^{q}- \frac{t_0^{\de}}{\de}m_a\la\ds\int_{B_\kappa(0)}|U_\e|^\de dx.
	\end{align*}
	Let $\ba=\e^\al$ with $\al>\frac{n-ps_1}{p-1}$, then we have
	  \begin{equation}\label{eqb32}
	  \begin{aligned}
           \ds\sup_{t\ge t_0}\mc{J}_\la(tu_{\e,\kappa})\leq \frac{s_1}{n}S^{\frac{n}{ps_1}}+C_1\e^{\frac{n-ps_1}{p-1}}-\frac{t_0^{\de}}{\de}m_a\la\ds\int_{B_\kappa(0)}|U_\e|^\de dx.
	  \end{aligned}
	 \end{equation}
	Next we estimate $\ds\int_{B_\kappa(0)}|U_\e|^\de dx$.
	For $\kappa>0$, sufficiently small such that $B_{\theta\kappa}(0)\Subset\Om$, $\theta\kappa <\kappa_1$ and $0<\e<\kappa/2$, we have 
	 \begin{align*}
	     \ds\int_{B_\kappa(0)}|U_\e|^qdx &=\e^{n-\frac{n-ps_1}{p}\de}\ds\int_{B_{\frac{\kappa}{\e}}(0)}|U|^\de dx\\ &\geq
    	\e^{n-\frac{n-ps_1}{p}\de}\omega_{n-1}\ds\int_{1}^{\frac{\kappa}{\e}}|U(r)|^\de r^{n-1}dx\\ &\geq
	    \e^{n-\frac{n-ps_1}{p}\de}\omega_{n-1}c_1^\de\ds\int_{1}^{\frac{\kappa}{\e}}r^{n-\frac{n-ps_1}{p}\de-1}dx\\ &\simeq 
	    C_3\left\{
	    \begin{array}{lr}
	     \e^{n-\frac{n-ps_1}{p}\de}  \qquad\qquad \ \mbox{if}\quad \de>\frac{n(p-1)}{n-ps_1},\\
	     \e^{n-\frac{n-ps_1}{p}\de}|\log\e| \qquad\mbox{if}\quad \de=\frac{n(p-1)}{n-ps_1}.\\
	   \end{array}
	  \right.
	\end{align*}
	Choosing $\e=(\la^\frac{p}{p-\de})^\frac{p-1}{n-ps_1}$, \eqref{eqb32} yields 
	\begin{align*}
	  \ds\sup_{t\ge t_0}\mc{J}_\la(tu_{\e,\kappa}) &\leq \frac{s_1}{n}S^{\frac{n}{ps_1}}+C_1\la^{\frac{p}{p-\de}}\\	& \quad- C_3\la\left\{
	  \begin{array}{lr}
	   \la^{\frac{p(p-1)}{(p-\de)(n-ps_1)} \left(n-\frac{n-ps_1}{p}\de\right)} \quad \mbox{if } \ \de>\frac{n(p-1)}{n-ps_1},\\
	   \la^\frac{\de}{p-\de} |\log\la^\frac{p\de}{(p-\de)n}|  \qquad\quad\mbox{ if } \ \de=\frac{n(p-1)}{n-ps_1}.\\
	\end{array}
	\right.
	\end{align*}
\textbf{Case 1:} If $\de>\frac{n(p-1)}{n-ps_1}$.\\
Clearly
\begin{align*}
  1&+\frac{p(p-1)}{(p-\de)(n-ps_1)} \left(n-\frac{n-ps_1}{p}\de\right)
     < \frac{p}{p-\de}	
\end{align*}
 if and only if $\de>\frac{n(p-1)}{n-ps_1}$. 
 Thus, there exists $\ga_1>0$ such that for all $\la\in(0,\ga_1)$,
   $$C_1\la^\frac{p}{p-\de}-C_3\;\la^{1+{\frac{p(p-1)}{(p-\de)(n-ps_1)}  \left(n-\frac{n-ps_1}{p}\de\right)}}< -C_\de\la^\frac{p}{p-\de}.$$
	
\textbf{Case 2:} If $\de=\frac{n(p-1)}{n-ps_1}$\\
Since $|\log\la^\frac{p\de}{n(p-\de)}|\ra\infty$ as $\la\ra 0$, therefore we can choose $\ga_2>0$ such that 
$$ C_1\la^\frac{p}{p-\de}-C_3\;\la^\frac{p}{p-\de}|\log\la^\frac{p\de}{n(p-\de)}|< - C_\de\la^\frac{p}{p-\de}.$$
Let $\La_{00}=\min\{\La_0,\ga_1, \ga_2, (\frac{\kappa}{2})^\frac{n-ps_1}{p-1}\} >0$, $\ba_{00}=\La_{00}^\frac{p}{p-\de}$ and $\e_{00}=(\La_{00}^\frac{p}{p-\de})^\frac{p-1}{n-ps_1}$. Then, for all $\la\in(0,\La_{00})$, $\ba\in(0, \ba_{00})$ and $\e\in(0,\e_{00})$, we obtain
$$\ds\sup_{t\ge 0}\mc{J}_\la(tu_{\e,\kappa})< c_\infty.$$
Now choosing $\kappa>0$ sufficiently small we see that $u_{\e,\kappa}\in X_{p,s_1}$ and by fibering map analysis there exists $\hat{t}>0$ such that $\hat{t}u_{\e,\kappa}\in\mc N_\la^-$.
Hence, \begin{align*}
\theta_\la^- \leq\mc J_\la(\hat{t}u_{\e,\kappa})\leq \ds\sup_{t\ge 0}\mc J_\la(tu_{\e,\kappa})< c_\infty.
\end{align*} 
 This completes the proof of Lemma.  \QED
\end{proof}	
{\textbf{Proof of Theorem \ref{pqthm2}}} \textit{(ii)}:
Lemma \ref{tt} and Proposition \ref{propb2} hold true even when $\mc N_\la$ is replaced by $\mc N_\la^-$, hence we get a minimizing sequence $\{u_k\} \subset\mc N_\la^-$ such that $\mc J_\la(u_k)=\theta_\la^- +o_k(1)$ and $\mc J^\prime_\la(u_k)=o_k(1)$ that is, $\{u_k\}$ is a $(PS)_{\theta_\la^-}$ sequence for $\mc J_\la$. From Lemmas \ref{lemm3} and \ref{lemm2}, there exists $v_\la\in X_{p,s_1}$ such that $u_k\ra v_\la$ in $X_{p,s_1}$ and Theorem \ref{thb2} ensures $\langle \mc J_\la^\prime(v_\la), v_\la\rangle =0$. Now, using strong convergence $u_k\ra v_\la$ and noting that $\mc N_\la^0 =\emptyset$, we get $v_\la\in\mc N_\la^-$ and $\theta_\la^- =\mc J_\la(v_\la)$. Using similar argument as in proof of Theorem \ref{pqthm1} we can show that $v_\la$ is non-negative.\QED

\begin{Remark} If $s_1=s_2$, then the result of Lemma \ref{cheeg} does not hold true. In this case, the energy functional $\mc J_\la$ is well defined on the space $X:=X_{p,s_1}\cap X_{q,s_2}$. Then one can defined 
	the Nehari manifold associated to $\mc J_\la$ as 
	 \begin{equation*}
	  \mc {N_\la}=\{{u\in X\setminus\{0\}}: \langle \mc {J_\la}^\prime(u), u\rangle =0\},
	\end{equation*}
	where $\ds\langle ._,. \rangle$ is the duality between $X$ and its dual space.
	Observe that $\mc {N_\la} \not\equiv \emptyset$. One can easily prove that the functional $\mc J_\la$ is coercive and bounded below on $\mc {N_\la}$. 
	 Using fibering map analysis and Nehari manifold technique (See section 3) we can obtain the existence and multiplicity results as in Theorem \ref{pqthm1} and Theorem \ref{pqthm2}\textit{(i)} on the space $X$. 
\end{Remark}

\section {The case $\de=q$}
In this section we prove existence  of an solution of the problem $(\mc P_\la)$ for the case $\de=q\leq p<r\leq p^*_{s_1}$ by using the blowup analysis and mountain pass theorem. In this case problem $(\mc P_\la)$ reduces to 
	 \begin{equation}\label{probq}
	   \left\{
	     \begin{array}{rlll}\ds
	      \quad  (-\Delta)^{s_1}_{p}u+ \ba (-\Delta)^{s_2}_{q}u &=& \la a(x)|u|^{q-2}u+ b(x)|u|^{r-2} u \quad \text{in} \; \Om \\
	      \quad \quad  u &=&0\quad\quad \text{on} \; \mb R^n\setminus \Om,
	      \end{array}
	    \right.
	\end{equation}
 The energy functional associated to problem \eqref{probq}, $\mc I_\la:X_{p,s_1}\to \mb R$ is  defined as 
  \begin{align*}
	  \mc I_\la(u) :=\frac{1}{p}\|u\|_{X_{p,s_1}}^{p}+ \frac{\ba}{q}\|u\|_{X_{q,s_2}}^{q} -\frac{\la}{q}\int_{\Om}a(x)|u|^{q} ~dx -\frac{1}{r}\int_{\Om}b(x)|u|^{r} ~dx.
  \end{align*}
 \begin{Lemma}\label{mpg}
   Suppose $1<q\le p<r\le p^*_{s_1}$, then there exists $\la_*>0$ such that 
      \begin{enumerate}
		\item[(i)] there exist $\rho,\eta>0$ such that for all $\la\in(0,\la_*)$, $$\mc I_\la(u)>\eta\; \text{ for all}\; u\in X_{p,s_1}\; \text{ with} \; \|u\|_{X_{p,s_1}}=\rho,$$ 
		\item[(ii)]  there exists $u_0\in X_{p,s_1}$ such that $\mc I_\la(u_0)<0$ and $\|u_0\|_{X_{p,s_1}}>\rho$.
	  \end{enumerate}
 \end{Lemma}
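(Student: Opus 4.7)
The plan is to verify the two standard mountain pass geometry conditions by elementary estimates, treating (i) and (ii) separately.

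For (i), the idea is to drop the nonnegative term $\frac{\ba}{q}\|u\|_{X_{q,s_2}}^{q}$ and use the continuous embedding $X_{p,s_1}\hookrightarrow L^{m}(\Om)$ for $m\in[1,p_{s_1}^{*}]$ (encoded in the Sobolev constants $S_{m}$) to dominate the subcritical and superlinear terms. Since $a$ is continuous on the bounded domain $\Om$, we have $a\in L^{\infty}(\Om)$, and $b\in L^{\infty}(\Om)$ by hypothesis, so
\[
\int_{\Om}a(x)|u|^{q}\,dx\le \|a\|_{L^{\infty}(\Om)}S_{q}^{-q/p}\|u\|_{X_{p,s_1}}^{q}\quad\text{and}\quad \int_{\Om}b(x)|u|^{r}\,dx\le \|b\|_{L^{\infty}(\Om)}S_{r}^{-r/p}\|u\|_{X_{p,s_1}}^{r}.
\]
Substituting into $\mc I_{\la}$ and restricting to the sphere $\|u\|_{X_{p,s_1}}=\rho$ gives
\[
\mc I_{\la}(u)\ge \rho^{q}\!\left(\frac{\rho^{p-q}}{p}-\frac{\la\|a\|_{\infty}S_{q}^{-q/p}}{q}-\frac{\|b\|_{\infty}S_{r}^{-r/p}}{r}\rho^{r-q}\right).
\]
Since $q\le p<r$, in the case $q<p$ one first fixes $\rho>0$ small enough so that $\frac{\rho^{p-q}}{p}-\frac{\|b\|_{\infty}S_{r}^{-r/p}}{r}\rho^{r-q}\ge K$ for some $K>0$, and then chooses $\la_{*}>0$ small enough that the bracketed quantity exceeds $K/2$ for every $\la\in(0,\la_{*})$, yielding $\eta:=K\rho^{q}/2$. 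In the degenerate case $q=p$, one first chooses $\la_{*}$ small so that $\frac{1}{p}-\frac{\la\|a\|_{\infty}S_{q}^{-q/p}}{q}$ stays bounded below by a positive constant, and then shrinks $\rho$; this produces $\eta$ as before.

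For (ii), the plan is to exploit that $r>p\ge q$ so that the superlinear term dominates along a ray. Since $b$ is continuous and sign-changing, the open set $\{x\in\Om:b(x)>0\}$ is nonempty, so one can pick $\phi\in C_{c}^{\infty}(\Om)\setminus\{0\}$ supported there with $\int_{\Om}b|\phi|^{r}\,dx>0$. For $t>0$, Lemma \ref{cheeg} gives $\|\phi\|_{X_{q,s_2}}<\infty$, and we have
\[
\mc I_{\la}(t\phi)=\frac{t^{p}}{p}\|\phi\|_{X_{p,s_1}}^{p}+\frac{\ba t^{q}}{q}\|\phi\|_{X_{q,s_2}}^{q}-\frac{\la t^{q}}{q}\int_{\Om}a|\phi|^{q}\,dx-\frac{t^{r}}{r}\int_{\Om}b|\phi|^{r}\,dx.
\]
Since $r>p\ge q$ and $\int_{\Om}b|\phi|^{r}\,dx>0$, the term of order $t^{r}$ dominates, so $\mc I_{\la}(t\phi)\to-\infty$ as $t\to\infty$. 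Picking $t_{0}$ large enough that $\|t_{0}\phi\|_{X_{p,s_1}}>\rho$ and $\mc I_{\la}(t_{0}\phi)<0$, we set $u_{0}:=t_{0}\phi$.

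No serious obstacle is expected: the argument depends only on the embedding constants $S_{q},S_{r}$ and on the ordering $q\le p<r$ of the exponents. The only care needed is in what order one fixes $\rho$ and $\la_{*}$, which splits naturally between the subcases $q<p$ and $q=p$ as described above.
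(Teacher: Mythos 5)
Your proof is correct and follows essentially the same route as the paper: drop the nonnegative $\ba$-term, bound the $a$- and $b$-terms by powers of $\|u\|_{X_{p,s_1}}$ via H\"older and the Sobolev embeddings, factor out $\|u\|_{X_{p,s_1}}^q$, choose $\rho$ and $\la_*$ so that the bracketed quantity stays positive uniformly for $\la\in(0,\la_*)$, and for (ii) send $t\to\infty$ along a ray on which the $t^r$-term dominates. You are in fact slightly more careful than the paper on two minor points: you treat the degenerate case $q=p$ separately (the paper's explicit maximizer $t_0=\big(\tfrac{(p-q)r}{pC_2(r-q)}\big)^{\frac{1}{r-p}}$ of $f$ collapses to $0$ there), and you explicitly select $\phi$ with $\int_\Om b|\phi|^r\,dx>0$, which the paper leaves implicit when asserting $\mc I_\la(tu)\to-\infty$.
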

  \begin{proof}
  By means of H\"older inequality and Sobolev embeddings, we have 
	  \begin{equation}\label{eqb61}
	   \begin{aligned}
        \mc I_\la(u) &\ge \frac{1}{p}\|u\|_{X_{p,s_1}}^{p}+ \frac{\ba}{q}\|u\|_{X_{q,s_2}}^{q}-\la\frac{C_1}{q}\|u\|_{X_{p,s_1}}^q-\frac{C_2}{r}\|u\|_{X_{p,s_1}}^r \\
        &\ge  \frac{1}{p}\|u\|_{X_{p,s_1}}^{p}-\la\frac{C_1}{q}\|u\|_{X_{p,s_1}}^q-\frac{C_2}{r}\|u\|_{X_{p,s_1}}^r \\
        &=\|u\|_{X_{p,s_1}}^q \left( \frac{1}{p}\|u\|_{X_{p,s_1}}^{p-q}-\frac{C_2}{r}\|u\|_{X_{p,s_1}}^{r-q}-\la\frac{C_1}{q}\right).
	   \end{aligned}
	 \end{equation}
   Let $f:\mb R^+\to\mb R$ be defined as $f(t)=\frac{1}{p}t^{p-q}-\frac{C_2}{r}t^{r-q}$. A simple calculation shows that $f$ attains maximum at $t_0 =\left(\frac{(p-q)r}{p\;C_2(r-q)}\right)^\frac{1}{r-p}$ and $f(t_0)=\frac{(r-p)}{p(r-q)}\left(\frac{(p-q)r}{p\;C_2(r-q)}\right)^\frac{p-q}{r-p}$. Now choose $\la_*>0$ such that  \[\eta:= f(t_0)-\la_* \frac{C_1}{q}>0,\] then for all $\la\in(0,\la_*)$, we have $f(t_0)-\la\frac{C_1}{q}>\eta$. Therefore, choosing $\rho=t_0$,  \eqref{eqb61} implies 
   \begin{align*}
   	\mc I_\la(u)>\eta \quad \mbox{for all } u\in X_{p,s_1} \ \mbox{with } \|u\|_{X_{p,s_1}}=\rho.
   \end{align*}
   Since $\mc I_\la(tu)\ra -\infty$ as $t\ra\infty$, we can choose $\tilde{t}>0$ such that $\|\tilde{t}u\|_{X_{p,s_1}}>\rho$ and $\mc I_\la(\tilde{t}u)<0$, then setting $u_0=\tilde{t}u$ implies $(ii).$\QED
  \end{proof}
Define 
   \[ c_\la :=\inf\bigg\{\ds\sup_{t>0} \mc I_\la(tu) :u\in X_{p,s_1}\setminus\{0\}\bigg\}.\]

On the same line of proof of Lemmas \ref{lemmab3} and \ref{lemm3}, we can prove the following results:


\begin{Lemma}\label{lemm62}
  \begin{enumerate}
	\item[(i)] $\mc I_\la$ satisfies $(PS)_c$ condition for all $c\in\mb R$, provided $r<p^*_{s_1}$,
	\item[(ii)] $\mc I_\la$ satisfies $(PS)_c$ condition for all $c$ satisfying   
		 \begin{equation*}
		   -\infty <c< c_{\infty}:= \frac{s_1}{n} S^{\frac{n}{ps_1}}-C_q\; \la^{\frac{p}{p-q}},
		 \end{equation*}
	where
		 \begin{equation*}
		   C_q=\left(\frac{(p^*_{s_1}-q)(p-q)}{p \; q \;p^*_{s_1}}\right) \left(\frac{p^*_{s_1}-q}{p^*_{s_1}-p}\right)^{\frac{pq}{p-q}} S^{\frac{-q}{p-q}}\|a\|_{L^\infty(\Om)}^{\frac{p}{p-q}} \; |\Om|^{\frac{p(p^*_{s_1}-q)}{(p-q)p^*_{s_1}}},
		   \ \mbox{ provided }r=p^*_{s_1}.
		 \end{equation*} 
	 \end{enumerate}
  \end{Lemma}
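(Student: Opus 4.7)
The plan is to adapt the proofs of Lemmas \ref{lemmab3} and \ref{lemm3} with the concave exponent $\de$ replaced by $q$. First I would establish boundedness of any $(PS)_c$ sequence $\{u_k\}$ in $X_{p,s_1}$ via the standard combination
\[
\mc I_\la(u_k)-\tfrac{1}{r}\langle \mc I_\la'(u_k),u_k\rangle = \left(\tfrac{1}{p}-\tfrac{1}{r}\right)\|u_k\|_{X_{p,s_1}}^p + \ba\left(\tfrac{1}{q}-\tfrac{1}{r}\right)\|u_k\|_{X_{q,s_2}}^q - \la\left(\tfrac{1}{q}-\tfrac{1}{r}\right)\int_\Om a(x)|u_k|^q\,dx,
\]
controlling the concave term by H\"older's inequality and Sobolev embedding by $C\|u_k\|_{X_{p,s_1}}^q$; since $q\le p$ (with smallness of $\la$ absorbing the term if $q=p$), boundedness follows. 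Passing to a subsequence, $u_k\rightharpoonup u$ in $X_{p,s_1}$ (hence in $X_{q,s_2}$ via Lemma \ref{cheeg}), $u_k\to u$ strongly in $L^m(\Om)$ for $1\le m<p^*_{s_1}$, and a.e.\ in $\Om$. As in the proof of Theorem \ref{thb2}, the weak convergence of $A_p(u_k,\cdot)$ and $A_q(u_k,\cdot)$ together with dominated convergence on the lower-order terms yields $\mc I_\la'(u)=0$.

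For part (i), the exponent $r<p^*_{s_1}$ makes the embedding $X_{p,s_1}\hookrightarrow L^r(\Om)$ compact, so both $\int_\Om a(x)|u_k|^{q-2}u_k(u_k-u)\,dx$ and $\int_\Om b(x)|u_k|^{r-2}u_k(u_k-u)\,dx$ tend to zero by H\"older's inequality. Consequently $\langle \mc I_\la'(u_k)-\mc I_\la'(u),u_k-u\rangle\to 0$, and the three monotonicity cases from Lemma \ref{lemmab3} (the inequality $|a-b|^\ell\le 2^{\ell-2}(|a|^{\ell-2}a-|b|^{\ell-2}b)(a-b)$ when $p,q\ge 2$; Simon's inequality combined with H\"older when $1<q\le p<2$; and the combination when $1<q<2\le p$) applied separately to the $p$- and $q$-contributions deliver strong convergence $u_k\to u$ in $X_{p,s_1}$ and in $X_{q,s_2}$.

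For part (ii), compactness at the critical exponent fails so I would use Brezis--Lieb decomposition on $w_k:=u_k-u$:
\[
\|u_k\|_{X_{p,s_1}}^p=\|w_k\|_{X_{p,s_1}}^p+\|u\|_{X_{p,s_1}}^p+o_k(1),\qquad \|u_k\|_{X_{q,s_2}}^q=\|w_k\|_{X_{q,s_2}}^q+\|u\|_{X_{q,s_2}}^q+o_k(1),
\]
together with the analogous splitting for $\int|u_k|^{p^*_{s_1}}$. Since $u_k\to u$ strongly in $L^q(\Om)$ the concave term converges to the limit with no residual in $w_k$. Combining $\mc I_\la(u_k)=c+o_k(1)$, $\langle \mc I_\la'(u_k),u_k\rangle=o_k(1)$ and $\langle \mc I_\la'(u),u\rangle=0$, I obtain
\[
\tfrac{1}{p}\|w_k\|_{X_{p,s_1}}^p+\tfrac{\ba}{q}\|w_k\|_{X_{q,s_2}}^q-\tfrac{1}{p^*_{s_1}}\int_\Om|w_k|^{p^*_{s_1}}=c-\mc I_\la(u)+o_k(1),
\]
\[
\|w_k\|_{X_{p,s_1}}^p+\ba\|w_k\|_{X_{q,s_2}}^q-\int_\Om|w_k|^{p^*_{s_1}}=o_k(1).
\]
Assuming the common limit $l:=\lim (\|w_k\|_{X_{p,s_1}}^p+\ba\|w_k\|_{X_{q,s_2}}^q)=\lim\int|w_k|^{p^*_{s_1}}$ satisfies $l>0$, the Sobolev inequality forces $l^{p/p^*_{s_1}}\le S^{-1}l$, hence $l\ge S^{n/(ps_1)}$; in that case $c-\mc I_\la(u)\ge \frac{s_1}{n}l\ge\frac{s_1}{n}S^{n/(ps_1)}$. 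Combining with the lower bound $\mc I_\la(u)\ge -C_q\la^{p/(p-q)}$, obtained as in Theorem \ref{thb2} with $\de=q$, contradicts $c<c_\infty$, forcing $l=0$ and strong convergence.

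The main obstacle is establishing the lower bound $\mc I_\la(u)\ge -C_q\la^{p/(p-q)}$: it requires Young's inequality with exponents $p/q$ and $p/(p-q)$, hence strict $q<p$; the borderline $q=p$ must be treated separately by absorbing the concave term into the leading Sobolev norm via smallness of $\la$, which yields the trivial bound $\mc I_\la(u)\ge 0$ and renders the argument automatic. A secondary point is to justify the Brezis--Lieb decomposition in the nonlocal Gagliardo seminorms, which follows from a.e.\ convergence of the quotients $(u_k(x)-u_k(y))/|x-y|^{(n+ps_1)/p}$ together with boundedness in $L^p(\mb R^{2n})$ by the standard Brezis--Lieb argument.
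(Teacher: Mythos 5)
Your proposal is correct and follows exactly the route the paper intends: the paper's own "proof" of this lemma is the one-line remark that it goes along the same lines as Lemmas \ref{lemmab3} and \ref{lemm3} (with Theorem \ref{thb2} supplying the lower bound $\mc I_\la(u)\ge -C_q\la^{p/(p-q)}$), and your write-up is precisely that adaptation with $\de=q$, including the same monotonicity inequalities in the subcritical case and the same Brezis--Lieb/Sobolev dichotomy $l=0$ or $l\ge S^{n/(ps_1)}$ in the critical case. Your extra observation about the borderline $q=p$ (where $\la^{p/(p-q)}$ degenerates and only the subcritical statement makes sense without modification) is a sensible caveat the paper does not address, but it does not change the argument.
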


\begin{Lemma}\label{lemm63}
 Let $1<\frac{n(p-1)}{n-ps_1}\le q\le p$. There exists $\La_{*}$ such that for every $\la\in(0,\La_{*})$, there exists $\ba_{*}>0$ such that for all $\ba\in(0,\ba_{*})$ 
 $$ c_\la <c_\infty .$$
\end{Lemma}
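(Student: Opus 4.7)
The plan is to adapt the proof of Lemma \ref{lemm2} to the case $\de = q$. Since $c_\la = \inf_{u \neq 0}\sup_{t>0}\mc I_\la(tu)$, it suffices to exhibit a single $u \in X_{p,s_1}\setminus\{0\}$ with $\sup_{t>0}\mc I_\la(tu) < c_\infty$; after shrinking $\La_*$ so that $c_\infty>0$ I would test with $u = u_{\e,\kappa}$, the truncated Sobolev extremal from \eqref{auxfn}, with $\kappa>0$ fixed small so that $B_{\theta\kappa}(0) \Subset \Om \cap B_{\kappa_1}(0)$, and with $\e \in (0,\kappa/2)$ and $\ba > 0$ to be calibrated in terms of $\la$.

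Following the blueprint of Lemma \ref{lemm2}, I would split the range of $t$ into $[0, t_0]$ and $[t_0, t_1]$. On $[0,t_0]$ the bound $\mc I_\la(tu_{\e,\kappa}) \le C(t^p + t^q)$ (dropping the negative terms and applying Lemma \ref{cheeg}) gives the required inequality once $t_0$ is small. A uniform upper bound $t_1$ on the maximizer follows from the critical-point equation for $t \mapsto \mc I_\la(tu_{\e,\kappa})$: the dominant $-t^{p^*_{s_1}}$ term confines the maximizer to a bounded interval independent of $\e,\la,\ba$. On $[t_0, t_1]$ one then writes
\begin{align*}
\mc I_\la(tu_{\e,\kappa}) \le \sup_{t > 0}\left(\frac{t^p}{p}\|u_{\e,\kappa}\|_{X_{p,s_1}}^p - \frac{t^{p^*_{s_1}}}{p^*_{s_1}}\|u_{\e,\kappa}\|_{L^{p^*_{s_1}}(\Om)}^{p^*_{s_1}}\right) + \frac{\ba\, t_1^q}{q}\|u_{\e,\kappa}\|_{X_{q,s_2}}^q - \frac{\la\, t_0^q}{q}\int_\Om a(x)|u_{\e,\kappa}|^q dx.
\end{align*}
Lemma \ref{lemm1} bounds the first summand by $\frac{s_1}{n}S^{n/(ps_1)} + O((\e/\kappa)^{(n-ps_1)/(p-1)})$. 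Choosing $\ba = \e^\al$ with $\al > (n-ps_1)/(p-1)$ and invoking Lemma \ref{cheeg} to control $\|u_{\e,\kappa}\|_{X_{q,s_2}}$ turns the $\ba$-summand into an error of the same order. For the concave term, $a \ge m_a$ on $B_{\kappa_1}$ together with the sharp lower decay $U(r) \ge c_1 r^{-(n-ps_1)/(p-1)}$ yields
\begin{align*}
\int_\Om a(x)|u_{\e,\kappa}|^q dx \ge C\begin{cases} \e^{n - (n-ps_1)q/p}, & q > n(p-1)/(n-ps_1),\\[2pt] \e^{n - (n-ps_1)q/p}\,|\log\e|, & q = n(p-1)/(n-ps_1). \end{cases}
\end{align*}

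The final step is to set $\e = \la^{p(p-1)/((p-q)(n-ps_1))}$ and verify that the negative concave contribution strictly beats both the error $O(\e^{(n-ps_1)/(p-1)})$ and the shift $C_q\la^{p/(p-q)}$ appearing in $c_\infty$. A direct exponent comparison gives
\begin{align*}
1 + \frac{p(p-1)}{(p-q)(n-ps_1)}\left(n - \frac{(n-ps_1)q}{p}\right) \le \frac{p}{p-q},
\end{align*}
with strict inequality when $q > n(p-1)/(n-ps_1)$, and with the $|\log\la|$ factor supplying the needed slack at the borderline $q = n(p-1)/(n-ps_1)$. This produces $\sup_{t>0}\mc I_\la(tu_{\e,\kappa}) < c_\infty$ once $\la\in(0,\La_*)$ and $\ba\in(0,\ba_*)$ are small enough. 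The main obstacle, exactly as in the proof of Lemma \ref{lemm2}, is this exponent bookkeeping: the hypothesis $q \ge n(p-1)/(n-ps_1)$ is sharp precisely because it is what allows the concave part to simultaneously absorb the critical-level shift $C_q\la^{p/(p-q)}$ and the $(p,q)$-correction coming from the term $\ba\|u_{\e,\kappa}\|_{X_{q,s_2}}^q$.
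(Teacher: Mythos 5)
Your proposal is correct and follows essentially the same route as the paper: the paper's own proof of this lemma simply says "follow the proof of Lemma \ref{lemm2} with $u_{\e,\kappa}$" and takes the minimum of the resulting thresholds with $\la_*$, which is exactly the adaptation (same test function, same choice $\ba=\e^\al$ with $\al>\frac{n-ps_1}{p-1}$, same scaling $\e=\la^{\frac{p(p-1)}{(p-q)(n-ps_1)}}$, same exponent comparison with the borderline case handled by the $|\log\la|$ factor) that you carry out explicitly.
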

\begin{proof}
  Following the proof of Lemma \ref{lemm2} with $u_{\e,\kappa}$, define in \eqref{auxfn}, there exists $\La_{00}>0$ and $\e_{00}$ such that for $\la\in(0,\La_{00})$, there exists $\ba_{00}>0$ such that for all $\la\in(0,\La_{00})$, $\e\in(0,\e_{00})$ and $\ba\in(0,\ba_{00})$, 
	\[\ds\sup_{t\ge 0} \mc I_\la(tu_{\e,\kappa})<c_\infty.\]
  Let $\La_{*}=\min\{\La_{00}, \la_*\}$ and analogously define $\e_*>0$ and $\ba_*>0$. Then, for all $\la\in(0,\La_{*})$, $\e\in(0,\e_{*})$ and $\ba\in(0,\ba_{*})$, 
	\[\ds\sup_{t\ge 0} \mc I_\la(tu_{\e,\kappa})<c_\infty.\]
 Hence, ${c_\la}<c_\infty$, for all $\la\in(0,\La_{*})$.\QED
\end{proof}
\textbf{Proof of Theorem \ref{pqthm3}}: Using  Lemmas \ref{mpg},  \ref{lemm62}, \ref{lemm63} and standard Mountain Pass Theorem, there exists  a solution  $w_\la\in X_{p,s_1}$  of  \eqref{probq}. \QED

\end{document}